\newtheorem{proposition}{Proposition}[section]
\newtheorem{theorem}{Theorem}[section]
\newtheorem{lemma}[proposition]{Lemma}
\newtheorem{remark}{Remark}[section]
\newtheorem{corollary}[theorem]{Corollary}
\numberwithin{equation}{section}
\title{
Linear instability and nondegeneracy of ground state for combined power-type nonlinear scalar field equations with the Sobolev critical exponent and large frequency parameter
}
\author{Takafumi Akahori, Slim Ibrahim, Hiroaki Kikuchi}
\date{\today}
\newcommand{\TPho}{\widetilde{\Phi}_\omega}
\newcommand{\dH}{\dot{H}^1}
\begin{document}
\maketitle

\begin{abstract}
We consider combined power-type nonlinear scalar field equations with the Sobolev critical exponent. In \cite{AIKN3}, it was shown that if the frequency parameter is sufficiently small, then the positive ground state is nondegenerate and linearly unstable, together with an application to a study of global dynamics for nonlinear Schr\"odinger equations. In this paper, we prove the nondegeneracy and   linear instability of the ground state frequency for sufficiently large frequency parameters. Moreover, we show that the derivative of the mass of ground state with respect to the frequency is negative.    
\end{abstract}

%%%%%%%%%%%%%%%%%%%%%%%%%%%%%%%%%%%%%%%%%%%%%%%%%%%%%%%

\section{Introduction}

In this paper, we study the linear instability and the nondegeneracy of ground state to the scalar field equation of the form 
\begin{equation}\label{eq:1.1} 
\omega u  - \Delta u 
-|u|^{p-1}u-|u|^{\frac{4}{d-2}}u =0 
\quad 
\mbox{in $\mathbb{R}^{d}$},
\end{equation}
where $d\ge 3$, $\omega >0$ and $1< p < \frac{d+2}{d-2}$. These subjects (linear instability/nondegeneracy) are related to the study of the ``global dynamics around ground state'' for the corresponding evolution equations (see \cite{AIKN3, Nakanishi-Schlag1, Nakanishi-Schlag2}); evolution equations corresponding to \eqref{eq:1.1} are the nonlinear Schr\"{o}dinger equation   
\begin{equation} \label{nls}
i \frac{\partial \psi}{\partial t} + \Delta \psi + |\psi|^{p-1} \psi 
+ |\psi|^{\frac{4}{d-2}} \psi = 0 
\qquad \mbox{in $\mathbb{R}^{d} \times \mathbb{R}$}, 
\end{equation} 
and the nonlinear Klein-Gordon equation 
\begin{equation} \label{kg}
\frac{\partial^{2} \psi}{\partial t^{2}} - \Delta \psi + \psi -|\psi|^{p-1} \psi -|\psi|^{\frac{4}{d-2}} \psi = 0 
\qquad \mbox{in $\mathbb{R}^{d} \times \mathbb{R}$}.
\end{equation}
Furthermore, for various equations, the linear instability and nondegeneracy of ground state has been studied in connection with standing waves 
 (see, e.g., \cite{Georgiev-Ohta, Grillakis, Grillakis2, Grillakis-Shatah-Strauss2, Jones, Mizumachi1, Mizumachi2, Shatah-Strauss2} for the linear instability
and \cite{Grossi, Kabeya-Tanaka, Killip-Oh-Pocivnicu-Visan, 
Maris, Weinstein} for the nondegeneracy). 
Here, by a standing wave, we mean a solution of the form $\psi(t, x) = e^{i \omega t} u(x)$ for some  $\omega >0$ and some function $u$ on $\mathbb{R}^{d}$.

We shall make clear what the ground state means. To this end, we introduce a functional $\mathcal{S}_{\omega}$ as 
\begin{equation}\label{eq:1.5}
\mathcal{S}_{\omega}(u)
:=
\frac{\omega}{2}\|u\|_{L^{2}}^{2}
+
\frac{1}{2}\|\nabla u \|_{L^{2}}^{2}
-
\frac{1}{p+1}\|u\|_{L^{p+1}}^{p+1}
-
\frac{1}{2^{*}}\|u\|_{L^{2^{*}}}^{2^{*}}
,
\end{equation}
where 
\begin{equation}\label{eq:1.6}
2^{*}:=\frac{2d}{d-2}.  
\end{equation}
This functional $\mathcal{S}_{\omega}$ is called the action associated with \eqref{eq:1.1}. Then, by a ground state, we mean the least action solution among all nontrivial solutions  to \eqref{eq:1.1} in $H^{1}(\mathbb{R}^{d})$. 
\par 
For most of the scalar field equations, a ground state can be obtained as a minimizer of some variational (minimization) problem of the associate action. Furthermore, the variational value introduces an invariant set. 
The study of solutions in 
 such an invariant set has been studied by many researchers 
(see \cite{Akahori-Ibrahim-Kikuchi-Nawa1, AIKN2, AIKN3, CG, D-H-R, DM, Kenig-Merle, MXZ, Nakanishi-Roy, Nakanishi-Schlag1, Nakanishi-Schlag2}).

Now, we refer to the the existence of ground state to our equation \eqref{eq:1.1}. The following result is known (see, e.g., Proposition 1.1 in \cite{AIIKN}):  \begin{proposition}\label{proposition:1.1}
Assume either $d= 3$ and $3<p<5$, or else $d\ge4$ and $1<p<\frac{d+2}{d-2}$. Then, for any $\omega >0$ there exists a ground state to \eqref{eq:1.1}. 
\end{proposition}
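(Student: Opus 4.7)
The plan is to realize the ground state as a minimizer of the action $\mathcal{S}_\omega$ on a Nehari-type constraint and then invoke a concentration-compactness argument. Introduce the Nehari functional
\[
\mathcal{K}_\omega(u) := \langle \mathcal{S}'_\omega(u),u\rangle = \omega \|u\|_{L^2}^2 + \|\nabla u\|_{L^2}^2 - \|u\|_{L^{p+1}}^{p+1} - \|u\|_{L^{2^{*}}}^{2^{*}}
\]
and the minimization level
\[
m_\omega := \inf\bigl\{ \mathcal{S}_\omega(u) : u \in H^1(\Rd) \setminus \{0\},\ \mathcal{K}_\omega(u) = 0 \bigr\}.
\]
Eliminating $\|u\|_{L^{2^{*}}}^{2^{*}}$ between $\mathcal{S}_\omega$ and $\mathcal{K}_\omega$ leaves a strictly positive combination of the remaining terms, so $m_\omega > 0$ and any minimizing sequence is bounded in $H^1(\Rd)$.

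The crucial quantitative step is the strict upper bound
\[
m_\omega < \tfrac{1}{d}\, S^{d/2},
\]
where $S$ is the best constant in the Sobolev embedding $\dHRd \hookrightarrow L^{2^{*}}(\Rd)$; the right-hand side is precisely the ground-state level for the purely critical equation $-\Delta u = |u|^{4/(d-2)}u$, whose extremizers are the Aubin--Talenti bubbles and do not belong to $H^1(\Rd)$. Following the Brezis--Nirenberg strategy, I test $\mathcal{S}_\omega$ on the projection onto the Nehari manifold of a rescaled and suitably truncated Aubin--Talenti bubble $W_\varepsilon$, and expand $\|W_\varepsilon\|_{L^2}^2$ and $\|W_\varepsilon\|_{L^{p+1}}^{p+1}$ in the scaling parameter $\varepsilon \to 0$. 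The hypotheses $1 < p < (d+2)/(d-2)$ for $d \ge 4$ and $3 < p < 5$ for $d = 3$ are precisely those under which the subcritical gain dominates the $L^2$-loss.

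Once this strict inequality is established, take a minimizing sequence $(u_n) \subset \{\mathcal{K}_\omega = 0\}$ and replace each $u_n$ by its Schwarz symmetrization so as to assume radial and nonnegative. By Lions' concentration-compactness principle, vanishing is excluded by the lower bound on $\|u_n\|_{L^{p+1}}^{p+1} + \|u_n\|_{L^{2^{*}}}^{2^{*}}$ inherited from the Nehari constraint, while dichotomy would contradict the strict subadditivity implied by $m_\omega < d^{-1} S^{d/2}$. Hence a subsequence converges weakly in $H^1(\Rd)$ and strongly in $L^{p+1}(\Rd)$ to a nontrivial $u_\star$. A Lagrange-multiplier computation, combined with $\mathcal{K}_\omega(u_\star) \le 0$ from weak lower semicontinuity, forces the multiplier to vanish, so $u_\star$ solves \eqref{eq:1.1} and attains $\mathcal{S}_\omega(u_\star) = m_\omega$; this is a ground state.

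The main obstacle is the strict inequality $m_\omega < d^{-1} S^{d/2}$: its proof hinges on a careful asymptotic expansion of $\|W_\varepsilon\|_{L^{p+1}}^{p+1}$, whose order depends sensitively on the decay $W(x) \sim |x|^{-(d-2)}$ at infinity. In three dimensions the relevant integral acquires a logarithmic or polynomial profile of exactly the type that forces $p > 3$ in order to beat the $L^2$-loss; once this threshold step is in place the remaining concentration-compactness argument is by now essentially standard.
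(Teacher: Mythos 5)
The paper does not actually prove Proposition \ref{proposition:1.1}; it quotes it from \cite{AIIKN}, whose proof (going back to \cite{Akahori-Ibrahim-Kikuchi-Nawa1,AIKN2}) is exactly the Nehari-manifold/Brezis--Nirenberg scheme you describe: the strict bound $m_\omega<\tfrac{1}{d}S^{d/2}$ obtained by testing with truncated Talenti bubbles, with the exponent restrictions ($p>3$ when $d=3$, any $1<p<\tfrac{d+2}{d-2}$ when $d\ge4$) arising precisely from the competition between $\|W_\varepsilon\|_{L^{p+1}}^{p+1}$ and $\|W_\varepsilon\|_{L^2}^2$, followed by a compactness argument below the critical level. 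Your outline is correct and matches that route; only routine details would need filling in (re-projecting the Schwarz symmetrization onto the Nehari manifold before passing to the limit, and using the Brezis--Lieb lemma together with the bound $m_\omega<\tfrac{1}{d}S^{d/2}$, rather than literal Lions dichotomy, to exclude loss of compactness in the critical term).
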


\begin{remark}\label{18/09/05/15:30}
When $d=3$ and $1<p\le 3$, we can prove the existence of ground state for small frequencies (see, e.g., \cite{AIKN3}): More precisely, there exists $\omega_{0}>0$ such that for any $0< \omega < \omega_{0}$, the equation \eqref{eq:1.1} admits a ground state. We do not know the existence of ground state to \eqref{eq:1.1} for a large $\omega$ in three dimensions. 
\end{remark}

The standard theory for semilinear elliptic equation gives us the following information on the ground state (see \cite{AIKN3, GNN, Lieb-Loss}): 
\begin{lemma}\label{17/07/16/16:37}
Assume $d\ge 3$ and $1<p<\frac{d+2}{d-2}$. Then, for any $\omega >0$, the following holds as long as a ground state exists: any ground state $Q_{\omega}$ to \eqref{eq:1.1} is of class $C^{2}$ on $\mathbb{R}^{d}$, and there exist $y\in \mathbb{R}^{d}$, $\theta \in \mathbb{R}$ and a positive ground state $\Phi_{\omega}$ to \eqref{eq:1.1} such that $Q_{\omega}(x)=e^{i\theta} \Phi_{\omega}(x-y)$ for all $x\in \mathbb{R}^{d}$. Furthermore, any positive ground state $\Phi_{\omega}$ is strictly decreasing in the radial direction, and there exist $C(\omega)>0$ and $\delta(\omega)>0$ such  that 
\begin{equation}\label{17/08/13/11:05}
|\Phi_{\omega}(x)|+|\nabla \Phi_{\omega}(x)| 
\le C(\omega) e^{-\delta(\omega)|x|}
.
\end{equation}
\end{lemma}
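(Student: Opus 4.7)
My plan is to establish, in order, the $C^2$ regularity, the reduction to a positive radial ground state, strict radial monotonicity, and exponential decay, combining elliptic bootstrap with a Kato-inequality/moving-plane argument and comparison against exponential barriers.

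For $C^2$ regularity, I would view \eqref{eq:1.1} as $(-\Delta+\omega)Q_{\omega}=V(x)Q_{\omega}$ with $V:=|Q_{\omega}|^{p-1}+|Q_{\omega}|^{4/(d-2)}$ lying in $L^{d/2}$ thanks to $Q_{\omega}\in H^{1}\subset L^{2^{*}}$. A Brezis-Kato iteration then upgrades $Q_{\omega}$ to $L^{q}(\mathbb{R}^{d})$ for every $q<\infty$ and, by a further Moser step, to $L^{\infty}$. Standard Calderon-Zygmund theory gives $Q_{\omega}\in W^{2,q}_{\mathrm{loc}}$ for all $q<\infty$, hence $C^{1,\alpha}_{\mathrm{loc}}$ by Morrey; Schauder estimates applied to the resulting Holder-continuous right-hand side then yield $C^{2,\alpha}_{\mathrm{loc}}\subset C^{2}(\mathbb{R}^{d})$.

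For the structural identity $Q_{\omega}=e^{i\theta}\Phi_{\omega}(\cdot-y)$, I first show that $|Q_{\omega}|$ is also a ground state: the $L^{q}$ terms in $\mathcal{S}_{\omega}$ depend only on $|Q_{\omega}|$, and Kato's inequality $|\nabla|Q_{\omega}|\,|\le|\nabla Q_{\omega}|$ gives $\mathcal{S}_{\omega}(|Q_{\omega}|)\le\mathcal{S}_{\omega}(Q_{\omega})$. Minimality forces equality in Kato, whose rigidity (on the open set $\{|Q_{\omega}|>0\}$) produces a constant phase $e^{i\theta}$ with $Q_{\omega}=e^{i\theta}|Q_{\omega}|$. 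The strong maximum principle applied to the $C^{2}$ nonnegative solution $|Q_{\omega}|$ gives strict positivity, so set $\Phi_{\omega}:=|Q_{\omega}|$. The Gidas-Ni-Nirenberg moving plane method \cite{GNN} then applies to this positive $C^{2}$ solution with $H^{1}$ decay, yielding radial symmetry about some $y\in\mathbb{R}^{d}$ together with the strict radial monotonicity $\partial_{r}\Phi_{\omega}<0$ for $r>0$.

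For the exponential decay, since $\Phi_{\omega}\in H^{1}$ is radial and monotone, it vanishes at infinity, so there exists $R>0$ with $V(x)\le\omega/2$ for $|x|\ge R$; the equation then gives
\begin{equation*}
-\Delta\Phi_{\omega}+\tfrac{\omega}{2}\Phi_{\omega}\le 0 \qquad \text{on } \{|x|\ge R\}.
\end{equation*}
Comparing with a radial supersolution $x\mapsto Ae^{-\delta|x|}$ on $\{|x|\ge R\}$ for any $\delta\in(0,\sqrt{\omega/2})$, with $A$ chosen so that the barrier dominates $\Phi_{\omega}$ on $\{|x|=R\}$, the maximum principle yields the pointwise bound on $\Phi_{\omega}$. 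The gradient bound then follows by differentiating \eqref{eq:1.1}: each $\partial_{j}\Phi_{\omega}$ solves a linear elliptic equation with coefficients controlled by $\Phi_{\omega}^{p-1}+\Phi_{\omega}^{4/(d-2)}$, so interior gradient estimates on unit balls transfer the exponential decay from $\Phi_{\omega}$ to $\nabla\Phi_{\omega}$ at the same rate, possibly after adjusting $C(\omega)$.

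The step I expect to require the most care is the rigidity in Kato's inequality when $p<2$, where $u\mapsto|u|^{p-1}u$ is not differentiable: one must argue purely at the level of the variational identity $|\nabla|Q_{\omega}|\,|=|\nabla Q_{\omega}|$ a.e., write $Q_{\omega}=e^{i\theta(x)}\Phi_{\omega}$ on $\{\Phi_{\omega}>0\}$, and use the resulting identity $\Phi_{\omega}^{2}|\nabla\theta|^{2}=0$ together with strict positivity and connectedness to conclude that $\theta$ is constant. Everything else is a routine combination of bootstrap, maximum principle, and moving-plane machinery once regularity and positivity are in hand.
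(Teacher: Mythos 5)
Your argument is correct and is essentially the standard elliptic-theory proof that the paper does not write out but delegates to its cited references (\cite{AIKN3, GNN, Lieb-Loss}): Brezis--Kato bootstrap plus Calder\'on--Zygmund and Schauder estimates for $C^{2}$ regularity, the modulus/Kato-rigidity argument for the constant phase and positivity, Gidas--Ni--Nirenberg moving planes for radial symmetry and strict monotonicity, and an exponential barrier plus interior gradient estimates for \eqref{17/08/13/11:05}. The one step you should make explicit is why ``minimality forces equality in Kato'': since the paper defines a ground state as the least-action \emph{solution} and $|Q_{\omega}|$ is not a priori a solution, you need the identification of the ground-state level with the Nehari-manifold minimization level (projecting $|Q_{\omega}|$ back onto the Nehari manifold, which is possible because Kato's inequality gives $\langle \mathcal{S}_{\omega}'(|Q_{\omega}|),|Q_{\omega}|\rangle\le 0$) before the comparison $\mathcal{S}_{\omega}(|Q_{\omega}|)\le\mathcal{S}_{\omega}(Q_{\omega})$ rules out strict inequality in Kato and shows $|Q_{\omega}|$ is itself a ground state.
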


We refer to the uniqueness of ground state to \eqref{eq:1.1} in a remark:
\begin{remark}\label{18/06/24/15:47}
\noindent 
{\rm (i)}~Assume $3 \leq d \leq 6$ and 
$\frac{4}{d-2} \leq p < \frac{d+2}{d-2}$. 
 Then, we can verify that the result of Pucci and Serrin~\cite{PS} is applicable (see \cite[Appendix C]{AIIKN}), and find that a positive solution to \eqref{eq:1.1} is unique. 
\\
\noindent 
{\rm (ii)}~Assume $d \ge 5$ and $1<p< \frac{d+2}{d-2}$. Then in \cite{AIIKN}, it was proved that there exists $\omega_{1}>0$ such that for any $\omega>\omega_{1}$, the positive radial ground state to \eqref{eq:1.1} is unique.  
\\
\noindent 
{\rm (iii)}~Assume $d=3$ and $3<p<5$. Then, it follows from the result by Coles and Gustafson \cite{CG} that for any sufficiently large $\omega>0$, the positive ground state to \eqref{eq:1.1} is unique. 
\end{remark}

We find from Lemma 19 and Lemma 20 of \cite{Shatah-Strauss} that 
 the mapping $\omega \in (\omega_{3},\infty) \mapsto \Phi_{\omega} \in H^{1}(\mathbb{R}^{d})$ is continuously differentiable, where $\Phi_{\omega}$ is the unique positive ground sate to \eqref{eq:1.1}.
\par 
In order to state our results, we need to introduce several notation: 
\\ 
\noindent 
\textbullet~We use $\omega_{1}$ to denote the frequency such that for any $\omega> \omega_{1}$, a positive ground state to \eqref{eq:1.1} is unique .  
\\
\noindent
\textbullet~We use $\Phi_{\omega}$ to denote a positive ground state to \eqref{eq:1.1}. 
\\ 
\noindent 
\textbullet~The symbol $\mathcal{L}_{\omega}$ denotes the linearized operator around $\Phi_{\omega}$ from $H^{2}(\mathbb{R}^{d})$ to $L^{2}(\mathbb{R}^{d})$, namely for any $u\in H^{2}(\mathbb{R}^{d})$, 
\begin{equation}\label{15/02/02/10:25}
\mathscr{L}_{\omega}u
=
\omega u -\Delta u 
-
\frac{p+1}{2}\Phi_{\omega}^{p-1}u 
-\frac{p-1}{2}\Phi_{\omega}^{p-1}\overline{u}
-\frac{d}{d-2}\Phi_{\omega}^{\frac{4}{d-2}}u 
-\frac{4}{d-2}\Phi_{\omega}^{\frac{4}{d-2}}\overline{u}
.
\end{equation}

\noindent 
\textbullet~We introduce two operators $L_{\omega,+}$ and $L_{\omega,-}$ as 
\begin{align}\label{13/02/02/17:56}
L_{\omega, +} 
&:= 
\omega  -\Delta  
-p \Phi_{\omega}^{p-1} -\frac{d+2}{d-2}\Phi_{\omega}^{\frac{4}{d-2}}, 
\\[6pt]
\label{13/02/02/17:57}
L_{\omega, -} 
&:= 
\omega  -\Delta  - \Phi_{\omega}^{p-1} - \Phi_{\omega}^{\frac{4}{d-2}}.
\end{align}

%\vspace{1zw}

In the study of global dynamics around the ground state for \eqref{nls}, 
fine properties of the ground state are needed. 
On the other hand, the existence of the Sobolev critical exponent and the failure of scale invariance make the study of ground state complicated. A significance of this paper is that we deal with the equation \eqref{eq:1.1} having both such difficulties.

First, we refer to the linearized operator. It is worthwhile noting that the operator $\mathcal{L}_{\omega}$ multiplied by the imaginary unit $i$, namely  $i \mathcal{L}_{\omega}$, plays an important role in the study of dynamics around ground state for the nonlinear Schr\"{o}dinger equation \eqref{nls} (see \cite{AIKN3, DM, Nakanishi-Roy, Nakanishi-Schlag1}). We say that the standing wave 
$e^{i \omega t}$ for \eqref{nls} is linearly unstable if the linearized operator $-i\mathcal{L}_{\omega}$ has an eigenvalue with positive real part. One of our main results is related to the linear instability of ground state: 
\begin{theorem}\label{17/08/03/11:47}
Assume $d= 3$ and $3<p <5$; or $d=4$ and $2\le p <3$; or $d\ge 5$ and $1<p<\frac{d+2}{d-2}$. Then, there exists $\omega_{2}>\omega_{1}$ such that for any $\omega >\omega_{2}$, the operator $-i\mathcal{L}_{\omega}$ has a positive eigenvalue as an operator in $L_{real}^{2}(\mathbb{R}^{d})$. 
\end{theorem}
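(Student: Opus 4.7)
The plan is to reduce Theorem \ref{17/08/03/11:47} to a Grillakis--Shatah--Strauss style criterion for linear instability (in the spirit of \cite{Grillakis, Grillakis-Shatah-Strauss2}, or the formulation adopted in \cite{AIKN3}). Writing $\mathcal{L}_\omega(v+iw)=L_{\omega,+}v+iL_{\omega,-}w$ for real $v,w$, the eigenvalue equation $-i\mathcal{L}_\omega(v+iw)=\mu(v+iw)$ with $\mu>0$ becomes $L_{\omega,+}v=-\mu w$ and $L_{\omega,-}w=\mu v$, so that finding a positive real eigenvalue of $-i\mathcal{L}_\omega$ amounts to finding a negative eigenvalue of $L_{\omega,+}L_{\omega,-}$ on a subspace complementary to the generalized kernel. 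The ingredients required are: (a) $L_{\omega,-}\ge 0$ with $\ker L_{\omega,-}=\mathrm{span}\{\Phi_\omega\}$; (b) $L_{\omega,+}$ has exactly one simple negative eigenvalue and $\ker L_{\omega,+}=\mathrm{span}\{\partial_{x_j}\Phi_\omega\}_{j=1}^{d}$; and (c) $\tfrac{d}{d\omega}\|\Phi_\omega\|_{L^2}^{2}<0$.

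Input (a) follows from $L_{\omega,-}\Phi_\omega=0$ (a direct consequence of \eqref{eq:1.1}) together with the Perron--Frobenius principle for positive ground states of Schr\"odinger-type operators. For (b), testing $L_{\omega,+}\Phi_\omega$ against $\Phi_\omega$ gives
\[
\langle L_{\omega,+}\Phi_\omega,\Phi_\omega\rangle
=-(p-1)\|\Phi_\omega\|_{L^{p+1}}^{p+1}-\tfrac{4}{d-2}\|\Phi_\omega\|_{L^{2^{*}}}^{2^{*}}<0,
\]
so at least one negative eigenvalue exists; combined with the nondegeneracy $\ker L_{\omega,+}=\mathrm{span}\{\partial_{x_j}\Phi_\omega\}_{j=1}^{d}$ (a companion result in this paper, proved for $\omega>\omega_{2}>\omega_{1}$ by a perturbative argument starting from the Aubin--Talenti bubble) and a standard nodal/Sturm analysis in the radial sector, one obtains that $L_{\omega,+}$ has exactly one simple negative eigenvalue. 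For (c), differentiating \eqref{eq:1.1} in $\omega$ yields $L_{\omega,+}\partial_\omega\Phi_\omega=-\Phi_\omega$; since $\Phi_\omega$ is radial while $\ker L_{\omega,+}$ is purely non-radial, $\partial_\omega\Phi_\omega$ is uniquely determined in the radial subspace and
\[
\tfrac{d}{d\omega}\|\Phi_\omega\|_{L^{2}}^{2}
= 2\langle \partial_\omega\Phi_\omega,\Phi_\omega\rangle
= -2\bigl\langle L_{\omega,+}^{-1}\Phi_\omega,\,\Phi_\omega\bigr\rangle.
\]

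The main obstacle is establishing the negativity in (c), which I would attack by an asymptotic analysis as $\omega\to\infty$. Under the rescaling $\widetilde\Phi_\omega(x):=\omega^{-(d-2)/4}\Phi_\omega(x/\sqrt{\omega})$ the equation becomes
\[
\widetilde\Phi_\omega-\Delta\widetilde\Phi_\omega
-\omega^{\frac{(d-2)p-(d+2)}{4}}\widetilde\Phi_\omega^{\,p}
-\widetilde\Phi_\omega^{\frac{d+2}{d-2}}=0,
\]
and since $\tfrac{(d-2)p-(d+2)}{4}<0$ under our range of $p$, one expects $\widetilde\Phi_\omega\to W$ in $\dot H^{1}$ as $\omega\to\infty$, where $W$ is the Aubin--Talenti bubble and the subcritical term acts as a lower-order perturbation. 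In dimensions $d\ge 5$ one has $W\in L^{2}$, so $\|\Phi_\omega\|_{L^{2}}^{2}\sim\omega^{-1}\|W\|_{L^{2}}^{2}$ and the sign of the mass derivative is already visible at leading order. In $d=3,4$, however, $W\notin L^{2}$, and $\|\Phi_\omega\|_{L^{2}}^{2}$ is instead driven by the slowly decaying tail that the subcritical correction imposes on $\Phi_\omega$; this is precisely why the theorem restricts to $3<p<5$ when $d=3$ and $2\le p<3$ when $d=4$, since these thresholds are exactly what make $\Phi_\omega^{p}$ integrable against the bubble tail (equivalently, make $p(d-2)>d$), so that the correction contributes a controllable $L^{2}$-mass. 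Producing sufficiently sharp remainder estimates for $\widetilde\Phi_\omega-W$ in appropriate weighted norms, and then extracting the sign of $\langle L_{\omega,+}^{-1}\Phi_\omega,\Phi_\omega\rangle$ by decomposing $\Phi_\omega$ against the negative eigendirection of $L_{\omega,+}$ and the perpendicular positive-definite sector, is where the bulk of the technical work lies. Once (c) is in hand, the abstract criterion produces the desired positive real eigenvalue of $-i\mathcal{L}_\omega$.
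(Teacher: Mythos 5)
Your proposal reverses the logical architecture of the paper, and in doing so it leaves the hardest step unproved. In the paper, Theorem \ref{17/08/03/11:47} is proved \emph{first}, by a direct variational argument: one shows that
\[
\nu_{\omega}:=\inf\Big\{\tfrac{\langle \widetilde{L}_{\omega,-}^{1/2}\widetilde{L}_{\omega,+}\widetilde{L}_{\omega,-}^{1/2}f,f\rangle}{\|f\|_{L^{2}}^{2}}\colon \langle f,\widetilde{\Phi}_{\omega}\rangle=0\Big\}<0
\]
by testing with a truncation of the Duyckaerts--Merle function $Z$ (which satisfies $\langle Z,W\rangle=0$ and $\langle L_{+}Z,Z\rangle<0$) and using $\widetilde{\Phi}_{\omega}\to W$, then establishes a minimizer by compactness; the eigenfunction $g=\widetilde{L}_{\omega,-}^{1/2}f-\tfrac{i}{\mu}\widetilde{L}_{\omega,+}\widetilde{L}_{\omega,-}^{1/2}f$ yields the positive eigenvalue. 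The mass-derivative statement $\tfrac{d}{d\omega}\|\Phi_{\omega}\|_{L^{2}}^{2}<0$ is Theorem \ref{17/08/15/11:05}, and the paper \emph{deduces it from} the instability (via Grillakis' identity $N(R_{\omega})=I(-i\mathcal{L}_{\omega})$) together with nondegeneracy. You propose to go the other way: prove (c) by asymptotics and then invoke the Grillakis criterion. The abstract reduction in your first two paragraphs is sound, but your item (c) is precisely the statement the authors say they \emph{cannot} obtain by perturbation from the Talenti bubble when $\omega$ is large and $d=3,4$, because $W\notin L^{2}(\mathbb{R}^{d})$ there; the introduction states this explicitly as the reason for routing the argument through the instability theorem instead. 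Your sketch of how to overcome this (``sufficiently sharp remainder estimates for $\widetilde{\Phi}_{\omega}-W$ in appropriate weighted norms,'' then extracting the sign of $\langle L_{\omega,+}^{-1}\Phi_{\omega},\Phi_{\omega}\rangle$) is exactly where the proof would have to live, and it is not carried out; note also that knowing the leading asymptotics of $\|\Phi_{\omega}\|_{L^{2}}^{2}$ does not by itself give the sign of its $\omega$-derivative without uniform control of the derivative of the error term.

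There is a second, structural obstruction: your route needs the nondegeneracy of $L_{\omega,+}$ (both for the statement $n(L_{\omega,+})=1$ with trivial radial kernel and for defining $L_{\omega,+}^{-1}\Phi_{\omega}$). By Remark \ref{18/09/10/19:31}, nondegeneracy is known for $d=3$ (Theorem \ref{18/09/09/17:09}) and $d\ge5$, but is \emph{open} for $d=4$ --- yet Theorem \ref{17/08/03/11:47} includes the case $d=4$, $2\le p<3$. The paper's direct variational proof needs neither nondegeneracy nor the mass derivative, which is exactly why it covers $d=4$ while Theorems \ref{18/09/09/17:09} and \ref{17/08/15/11:05} do not. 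As written, your argument could at best recover the $d=3$ and $d\ge5$ cases, and only after supplying the missing asymptotic analysis of the mass derivative.
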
 

We give a proof of Theorem \ref{17/08/03/11:47} in Section \ref{17/08/03/10:55}.

The second main result relates to the nondegeneracy of ground state. Here, let us recall that $\Phi_{\omega}$ is said to be nondegenerate in $H^{1}_{\rm rad}(\mathbb{R}^{d})$ if the linear problem $L_{\omega,+}u=0$ has no nontrivial solution  in $H^{1}_{\rm rad}(\mathbb{R}^{d})$, namely, ${\rm Ker} L_{\omega,+} 
 \big|_{H^{1}_{\rm rad}(\mathbb{R}^{d})} = \{0\}$. 

\begin{theorem}\label{18/09/09/17:09}
Assume that $d =3$ and $3 < p  <5$. Then, there exists $\omega_{3}>0$ 
 such that for any $\omega > \omega_{3}$, the positive ground state to \eqref{eq:1.1} is nondegenerate in $H_{\rm rad}^{1}(\mathbb{R}^{d})$. 
\end{theorem}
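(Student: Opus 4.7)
The plan is to argue by contradiction via a concentration-compactness analysis in the Sobolev-critical scaling. Suppose there exist $\omega_n \to \infty$ and nonzero radial $\psi_n \in H^1_{\rm rad}(\mathbb{R}^3)$ with $L_{\omega_n,+}\psi_n = 0$. Set $\mu_n := \Phi_{\omega_n}(0) = \|\Phi_{\omega_n}\|_{L^{\infty}}$ and introduce the $\dot{H}^1$-invariant rescaling
\begin{equation*}
\tilde{\Phi}_n(y) := \mu_n^{-1}\Phi_{\omega_n}(\mu_n^{-2} y), \qquad \tilde{\psi}_n(y) := \mu_n^{-1}\psi_n(\mu_n^{-2} y),
\end{equation*}
normalizing $\psi_n$ so that $\|\tilde{\psi}_n\|_{\dot{H}^1}=1$. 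Invoking the uniqueness of the positive ground state for large $\omega$ (Remark \ref{18/06/24/15:47}(iii) and \cite{CG}), one first establishes $\mu_n \to \infty$, $\omega_n/\mu_n^{4} \to 0$, and $\tilde{\Phi}_n \to W$ in $\dot{H}^1(\mathbb{R}^3)$, where $W$ is the positive Aubin--Talenti instanton with $-\Delta W = W^{5}$ and $W(0)=1$.

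Rewriting $L_{\omega_n,+}\psi_n = 0$ under the rescaling and dividing by $\mu_n^{5}$ yields
\begin{equation*}
\omega_n\mu_n^{-4}\,\tilde{\psi}_n - \Delta \tilde{\psi}_n - p\,\mu_n^{p-5}\,\tilde{\Phi}_n^{p-1}\tilde{\psi}_n - 5\,\tilde{\Phi}_n^{4}\tilde{\psi}_n = 0.
\end{equation*}
Since $3<p<5$, both coefficients $\omega_n\mu_n^{-4}$ and $\mu_n^{p-5}$ tend to $0$. Extracting a weak $\dot{H}^1$-limit $\tilde{\psi}_\infty$ of $\tilde{\psi}_n$, the limit satisfies $-\Delta\tilde{\psi}_\infty - 5W^4\tilde{\psi}_\infty = 0$ on $\mathbb{R}^3$. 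The classical radial nondegeneracy of the Aubin--Talenti ground state forces $\tilde{\psi}_\infty = c\,\Lambda W$ with $\Lambda W := \tfrac{1}{2}W + x\cdot\nabla W$. To close the argument I would couple the orthogonality $\langle \psi_n,\Phi_{\omega_n}\rangle_{L^2}=0$, a consequence of $L_{\omega_n,+}(\partial_\omega\Phi_{\omega_n}) = -\Phi_{\omega_n}$ together with self-adjointness, with a cutoff Pohozaev-type identity tested against $\Lambda\Phi_{\omega_n}$, to deduce $c=0$. Upgrading the weak convergence to strong $\dot{H}^1$ convergence on compact sets via elliptic regularity applied to the rescaled equation, and ruling out escape of $\dot{H}^1$-mass at infinity, then contradicts $\|\tilde{\psi}_n\|_{\dot{H}^1}=1$.

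The hard part will be passing to the limit in the orthogonality relation, since the candidate limit $\Lambda W$ decays only like $|x|^{-1}$ at infinity and therefore belongs to $\dot{H}^1(\mathbb{R}^3)\setminus L^2(\mathbb{R}^3)$. Consequently $\langle\tilde{\psi}_n,\tilde{\Phi}_n\rangle_{L^2} = 0$ does not pass cleanly to an identity on $\tilde{\psi}_\infty$, and one must exploit the exponential decay \eqref{17/08/13/11:05} of $\Phi_{\omega_n}$ together with quantitative rates for $\tilde{\Phi}_n\to W$ to control the $L^2$-mass of $\tilde{\psi}_n$ on scales much larger than $\mu_n^{-2}$. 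A related subtlety is to rule out secondary concentration of $\tilde{\psi}_n$ at a scale different from $\mu_n^{-2}$, which would otherwise produce a radial harmonic $\dot{H}^1$-limit and require a separate contradiction argument built on a radial profile decomposition compatible with the critical scaling.
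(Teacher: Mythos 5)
Your skeleton (contradiction, critical rescaling, weak limit in ${\rm Ker}\,L_{+}\big|_{\dot H^{1}_{\rm rad}}={\rm span}\{\Lambda W\}$, and the recognition that $\Lambda W\notin L^{2}(\mathbb{R}^{3})$ is the crux) matches the paper's, but the two steps you lean on to kill the $\Lambda W$ component both have genuine problems. First, the orthogonality $\langle \psi_{n},\Phi_{\omega_{n}}\rangle_{L^{2}}=0$ is obtained from $L_{\omega_{n},+}(\partial_{\omega}\Phi_{\omega_{n}})=-\Phi_{\omega_{n}}$, but the existence and $H^{1}$-regularity of $\partial_{\omega}\Phi_{\omega}$ is itself normally a consequence of nondegeneracy via the implicit function theorem; the paper asserts differentiability of $\omega\mapsto\Phi_{\omega}$ only on $(\omega_{3},\infty)$, i.e.\ after Theorem \ref{18/09/09/17:09} is available. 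Using it inside the proof of that theorem is circular. The paper avoids this by testing against the dilation generator $w_{n}=x\cdot\nabla\Psi_{n}$ (which always exists) and against $\Psi_{n}$ itself; combining the two yields the substitute identity $\sqrt{\alpha_{n}}\,\langle\Psi_{n},v_{n}\rangle=\tfrac{5-p}{4}\,\tfrac{\varepsilon_{n}}{\sqrt{\alpha_{n}}}\langle\Psi_{n}^{p},v_{n}\rangle$, whose right-hand side converges to a nonzero multiple of $\kappa\int W^{p}\Lambda W$. Moreover the paper runs the logic in the opposite (and easier) direction from yours: it first shows $\kappa\neq0$ directly from the normalization $\|\nabla v_{n}\|_{L^{2}}=1$, because if $\kappa=0$ the decaying potentials $\varepsilon_{n}\Psi_{n}^{p-1}+\Psi_{n}^{4}$ force the quadratic form to vanish in the limit; this also disposes of your worry about secondary concentration and escape of $\dot H^{1}$-mass without any profile decomposition.

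Second, and more seriously, your plan for controlling the scalar product --- exponential decay of $\Phi_{\omega_{n}}$ plus quantitative rates for $\tilde\Phi_{n}\to W$ --- does not suffice to prove $\sqrt{\alpha_{n}}\langle\Psi_{n},v_{n}\rangle\to0$, because $v_{n}$ is only bounded in $\dot H^{1}$ and its $L^{2}$ tail is exactly of the critical size: $\|\Psi_{n}\|_{L^{2}}\sim\alpha_{n}^{-1/4}$, and any Cauchy--Schwarz splitting loses precisely the power of $\alpha_{n}$ you need. The paper's resolution is structural rather than quantitative: it rewrites the linearized equation as $v_{n}=(1-5R_{0}(-\alpha_{n})W^{4})^{-1}R_{0}(-\alpha_{n})\bigl[p\varepsilon_{n}\Psi_{n}^{p-1}v_{n}+5(\Psi_{n}^{4}-W^{4})v_{n}\bigr]$ and invokes the Jensen--Kato zero-energy resolvent expansion (Lemma \ref{thm-resolvent-3}), which isolates a $\lambda^{-1}$-singular rank-one part along $W^{4}\Lambda W$ and is uniformly bounded on its orthogonal complement (Lemma \ref{resolvent-2}); splitting $\Psi_{n}=k_{n}W^{4}\Lambda W+g_{n}$ with $\langle g_{n},W^{4}\Lambda W\rangle=0$ then gives the desired vanishing. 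Without this spectral input at the zero-energy resonance of $-\Delta-5W^{4}$, or an equivalent, your argument does not close.
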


\begin{remark}\label{18/09/10/19:31}
In Theorem 1.1 of \cite{AIIKN}, it was proved that 
  if $d\ge 5$ and $1<p<\frac{d+2}{d-2}$, then there exists $\omega_{3}>0$ such that for any $\omega>\omega_{3}$, the positive ground state $\Phi_{\omega}$ to \eqref{eq:1.1} is nondegenerate. When $d=4$, the nondegeneracy of  ground state to \eqref{eq:1.1} is still open.
\end{remark}

The proof is basically the same as the one of Theorem 1.1 in \cite{AIIKN}. The main difference between our Theorem \ref{18/09/09/17:09} and Theorem 1.1 
in \cite{AIIKN} is the integrability of the Talenti function (see \eqref{eq:1.16} below). More precisely, in \cite{AIIKN}, the case $d\ge 5$ is only dealt with, and the fact that $W \in L^{2}(\mathbb{R}^{d})$ was essential. On the other hand, in our case $d=3$, the Talenti function is not in $L^{2}(\mathbb{R}^{3})$, which gives rise to some difficulty in an application of the argument of \cite{AIIKN}. To overcome this difficulty, we employ the resolvent expansion (see Lemma \ref{thm-resolvent-3} below). We give a proof of Theorem \ref{18/09/09/17:09} in Section \ref{18/09/09/17:30}.\par 
The last main result of this paper relates to the sufficient condition of instability for standing waves obtained by Grillakis, Shatah and Strauss~\cite{Grillakis-Shatah-Strauss1}. More precisely, they gave a sufficient condition of the orbital stability for standing waves to a general Hamiltonian system under 
some assumption of the linearized operator. For nonlinear Schr\"odinger equations, the condition is described by the derivative of the mass of standing wave with respect to the frequency parameter; the positivity and the negativity imply the stability and the instability, respectively. Moreover, the derivative of the mass also plays an important role in ``symplectic decomposition'' which is used in the analysis of the solutions near the standing wave (see, e.g., (2.17) and (2.18) in \cite{Nakanishi-Schlag2}).

We state the last main result of this paper: 
\begin{theorem}\label{17/08/15/11:05}
Assume $d= 3$ and $3<p <5$; or $d\ge 5$ and $1<p<\frac{d+2}{d-2}$. Then, there exists $\omega_{4}>\omega_{3}$ 
($\omega_{3}$ is the constant give in Theorem \ref{18/09/09/17:09})
 such that for any $\omega \in (\omega_{3},\infty)$, 
\begin{equation}\label{13/03/30/10:40}
\frac{d}{d\omega}\| \Phi_{\omega}\|_{L^{2}}^{2} <0
.
\end{equation}
\end{theorem}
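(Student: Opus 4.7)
The plan is to reduce $\frac{d}{d\omega}\|\Phi_\omega\|_{L^2}^2$ to an asymptotic computation for the rescaled ground state, via a Pohozaev-type identity.

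First, combine the Pohozaev identity for \eqref{eq:1.1} (multiplying by $x\cdot\nabla\Phi_\omega$) with the energy identity (multiplying by $\Phi_\omega$); because $d/2^{*}=(d-2)/2$, both the $\|\nabla\Phi_\omega\|_{L^2}^2$ and $\|\Phi_\omega\|_{L^{2^*}}^{2^*}$ contributions cancel and one obtains
\[
\omega\,\|\Phi_\omega\|_{L^2}^2 \;=\; \kappa_{d,p}\,\|\Phi_\omega\|_{L^{p+1}}^{p+1}, \qquad \kappa_{d,p}:=\frac{d+2-(d-2)p}{2(p+1)}>0.
\]
Apply the Talenti-type rescaling $\Phi_\omega(x)=\omega^{(d-2)/4}V_\omega(\omega^{1/2}x)$; setting $Q(\omega):=\|V_\omega\|_{L^{p+1}}^{p+1}$ and $\gamma:=\frac{(d-2)(p+1)}{4}-\frac{d}{2}$, this becomes $\|\Phi_\omega\|_{L^2}^2=\kappa_{d,p}\,\omega^{\gamma-1}Q(\omega)$, so that
\[
\frac{d}{d\omega}\|\Phi_\omega\|_{L^2}^2 \;=\; \kappa_{d,p}\,\omega^{\gamma-2}\bigl[(\gamma-1)Q(\omega)+\omega Q'(\omega)\bigr].
\]
The subcriticality $p<(d+2)/(d-2)$ is exactly the condition $\gamma-1<0$, so the theorem reduces to proving $\omega Q'(\omega)=o(Q(\omega))$ as $\omega\to\infty$, with $Q(\omega)$ bounded below by a positive constant.

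Next, the rescaled ground state satisfies
\[
V_\omega-\Delta V_\omega \;=\; \omega^{-c}V_\omega^{p}+V_\omega^{(d+2)/(d-2)}, \qquad c:=1-\tfrac{(d-2)(p-1)}{4}>0,
\]
and on every compact set converges to the Aubin--Talenti function $W$ solving $-\Delta W=W^{(d+2)/(d-2)}$. Under the hypotheses ($p>3$ when $d=3$; $p>2/(d-2)$ when $d\ge 5$) one has $W\in L^{p+1}(\mathbb{R}^d)$, so $Q(\omega)\to\|W\|_{L^{p+1}}^{p+1}>0$ once uniform decay of $V_\omega$ at infinity is established. To treat $Q'(\omega)=(p+1)\int V_\omega^{p}\,\partial_\omega V_\omega\,dy$, I would differentiate the rescaled equation in $\omega$: then $\partial_\omega V_\omega$ solves the linear problem
\[
\Bigl(1-\Delta-p\omega^{-c}V_\omega^{p-1}-\tfrac{d+2}{d-2}V_\omega^{4/(d-2)}\Bigr)\partial_\omega V_\omega \;=\; -c\,\omega^{-c-1}V_\omega^{p},
\]
whose principal part is a perturbation of the limit operator $1-\Delta-\frac{d+2}{d-2}W^{4/(d-2)}$. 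Using the nondegeneracy of the ground state supplied by Theorem \ref{18/09/09/17:09} (and its $d\ge 5$ analogue from \cite{AIIKN}) together with the resolvent expansion of Lemma \ref{thm-resolvent-3}, one inverts this linear operator against the $O(\omega^{-c-1})$ source, yielding the bound $\omega Q'(\omega)=O(\omega^{-c})\to 0$ needed above.

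The main obstacle is intrinsic to dimension $d=3$: since $W\notin L^2(\mathbb{R}^3)$, the naive ansatz $V_\omega=W+(\text{small in }L^2)$ fails because the residual forcing in the equation for the correction is non-integrable, and the limit operator $1-\Delta-\frac{d+2}{d-2}W^{4/(d-2)}$ does not possess a standard $L^2$-inverse. This is precisely the same difficulty overcome in the proof of Theorem \ref{18/09/09/17:09} via matched inner/outer expansions and the resolvent framework; consequently, the bulk of the technical work in the proof of Theorem \ref{17/08/15/11:05} consists of pushing that machinery through to obtain the quantitative control on $\partial_\omega V_\omega$ needed to close the estimate.
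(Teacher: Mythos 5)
Your opening reduction is fine: the combination of the Pohozaev and energy identities does give $\omega\|\Phi_\omega\|_{L^2}^2=\kappa_{d,p}\|\Phi_\omega\|_{L^{p+1}}^{p+1}$. But the asymptotic step that everything else rests on is false. The rescaling $\Phi_\omega(x)=\omega^{(d-2)/4}V_\omega(\omega^{1/2}x)$ is \emph{not} the one under which the ground state converges to $W$: the correct normalization (used throughout the paper) is $\widetilde{\Phi}_\omega=M_\omega^{-1}\Phi_\omega(M_\omega^{-2/(d-2)}\cdot)$ with $M_\omega=\Phi_\omega(0)$, and Lemma \ref{proposition:2.3} says precisely that $\alpha_\omega=\omega M_\omega^{-4/(d-2)}\to0$, i.e.\ $M_\omega\gg\omega^{(d-2)/4}$. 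Hence your $V_\omega$ satisfies $V_\omega(0)=\omega^{-(d-2)/4}M_\omega\to\infty$: it concentrates rather than converging to $W$ on compacta. (This is forced: the limit equation $V-\Delta V=V^{(d+2)/(d-2)}$ has no nontrivial $H^1$ solution by Pohozaev, so $V_\omega$ cannot converge to a nontrivial profile.) Consequently $Q(\omega)=\|V_\omega\|_{L^{p+1}}^{p+1}$ does not tend to $\|W\|_{L^{p+1}}^{p+1}>0$; e.g.\ for $d=3$, using $M_\omega\sim\omega^{1/(2p-6)}$ from Theorem \ref{CG-1} one finds $Q(\omega)\sim\omega^{-(5-p)^2/(2(2p-6))}\to0$. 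With both $(\gamma-1)Q(\omega)$ and $\omega Q'(\omega)$ tending to $0$, the sign of their sum is exactly the original question in disguise, so the reduction buys nothing. (Also, minor: $\gamma-1<0$ is equivalent to $p<\frac{d+6}{d-2}$, not to subcriticality, though it does hold under your hypotheses.)

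Even if one repairs the scaling, the decisive estimate $\omega Q'(\omega)=o(Q(\omega))$ is only asserted, and it cannot be extracted from the cited results: the expansions \eqref{relation-parameter}--\eqref{relation-parameter2} control $\alpha(\varepsilon)$ and $\Psi_{\alpha(\varepsilon)}-W$ but say nothing about their $\omega$-derivatives (an $O(\varepsilon^{5/2})$ error does not differentiate), and the limiting operator $L_+$ has $\Lambda W$ in its kernel, so ``inverting against the source'' requires the full singular resolvent analysis of Lemma \ref{thm-resolvent-3} together with orthogonality to $W^4\Lambda W$ — none of which is set up here. The paper avoids all of this with a soft spectral argument: assuming $\langle\Phi_\omega,\partial_\omega\Phi_\omega\rangle\ge0$, it uses $L_{\omega,+}\partial_\omega\Phi_\omega=-\Phi_\omega$, the nondegeneracy, and the fact that $L_{\omega,+}$ has exactly one simple negative eigenvalue (Lemma \ref{13/01/01/16:22}) to show via a Cauchy--Schwarz inequality in the $L_{\omega,+}$-form that $\langle L_{\omega,+}f,f\rangle\ge0$ for all $f\perp\Phi_\omega$; this gives $N(R_\omega)=0$, contradicting Grillakis's index identity $N(R_\omega)=I(-i\mathcal{L}_\omega)\ge1$ supplied by Theorem \ref{17/08/03/11:47}. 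No asymptotics of $M_\omega$ or of $\partial_\omega\Phi_\omega$ are needed. As it stands, your proposal has a genuine gap at its central step and would need to be replaced by, or supplemented with, an argument of this spectral type (or a full matched-asymptotics construction with differentiable error bounds, which is a substantially harder project than sketched).
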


Note here that for the single power equation 
\begin{equation}\label{18/09/17/10:55}
- \Delta u + \omega u - |u|^{p-1} u = 0 ,  
\end{equation} 
the ground state is obtained as the scaling of the one for $\omega=1$: This is due to the scale invariance of the equation.  Thus, it is easy to compute the derivative of the mass with respect to $\omega$. On the other hand, when the scale invariance of an equation is lost, it would be much harder to find the derivative of the mass even for the combined power-type equation \eqref{eq:1.1}. An idea used in \cite{AIKN3} is a perturbation argument comparing \eqref{eq:1.1} with \eqref{18/09/17/10:55}. It works well when $\omega$ is sufficiently small. However,  the limiting equation as $\omega \to \infty$ is \eqref{eq:1.16} below, and the ground state to \eqref{eq:1.16} (known as the Talenti function) is no longer in $L^{2}(\mathbb{R}^{d})$ for $d=3,4$. Thus, we cannot apply the same argument 
 as \cite{AIKN3} to the case where $\omega$ is large. To overcome this difficulty, we give an argument combining the ones in \cite{Grillakis, Grillakis-Shatah-Strauss1} and the nondegeneracy of ground state (Theorem \ref{18/09/09/17:09} an Remark \ref{18/09/10/19:31}): Due to the nondegeneracy, 
 we cannot deal with the four dimensions.  We give a proof of Theorem \ref{17/08/15/11:05} in Section \ref{17/08/15/11:02}. 
\par 
Finally, we remark that for any sufficiently small frequencies, the ``9-set theory'' for \eqref{nls} is proved in \cite{AIKN3}. Combining the arguments in \cite{AIKN3} and Theorem \ref{17/08/03/11:47} through Theorem \ref{17/08/15/11:05} in this paper, we could obtain the 9-set theory for a sufficiently large frequencies: The result will be given elsewhere.  

%\vspace{1zw}
In addition to the above notation, we introduce another one:
\\
\noindent 
\textbullet~$L_{real}^{2}(\mathbb{R}^{d})$ denotes the set of functions in $L^{2}(\mathbb{R}^{d})$ equipped with the following inner product:   
\begin{equation}\label{17/08/15/06:01}
\langle u,v \rangle
:=
\Re \int_{\mathbb{R}^{d}}u(x)\overline{v(x)} 
\,dx \quad \mbox{$u,v \in L^{2}(\mathbb{R}^{d})$}.
\end{equation}
\noindent 
\textbullet~We use the convention that if $u \in H^{-1}(\mathbb{R}^{d})$ and $v\in H^{1}(\mathbb{R}^{d})$, then $\langle u, v \rangle$ denotes the duality pairing of $H^{1}(\mathbb{R}^{d})$ and $H^{-1}(\mathbb{R}^{d})$, namely 
\begin{equation}\label{17/08/15/06:03}
\langle u, v \rangle
=
\Re \int_{\mathbb{R}^{d}} 
(1-\Delta)^{-\frac{1}{2}}u(x) 
\overline{(1-\Delta)^{\frac{1}{2}} v(x)}\,dx .
\end{equation}

\noindent 
\textbullet~We define 
\begin{align}
\label{eq:1.11}
M_{\omega}&:=\Phi_{\omega}(0)
,
\\[6pt]
\label{eq:1.10}
\widetilde{\Phi}_{\omega}(x)&:=M_{\omega}^{-1}\Phi_{\omega}(M_{\omega}^{-\frac{2}{d-2}}x)
\end{align}

\noindent 
\textbullet~We use $W$ to denote the Talenti function with $W(0)=1$, namely  
\begin{equation}\label{eq:1.16}
W(x)
:=\Big(1+\frac{|x|^{2}}{d(d-2)} \Big)^{-\frac{d-2}{2}}
\end{equation}

\noindent 
\textbullet~The linearized operator around $W$ is denoted by $L_{+}$:
\begin{equation}
\label{eq:1.18}
L_{+}:=-\Delta -\frac{d+2}{d-2}W^{\frac{4}{d-2}}
\end{equation}

\noindent 
\textbullet~
\begin{equation}\label{eq:1.19}
\Lambda W:= \frac{d-2}{2}W+x\cdot \nabla W
\end{equation}

\noindent 
\textbullet~We use $B_{R}$ to denote the closed ball in $\mathbb{R}^{d}$ of the center $0$ and a radius $R$:
\begin{equation}\label{17/08/15/11:29}
B_{R}:=\{ x\in \mathbb{R}^{d} \colon |x|\le R \}.
\end{equation}

\noindent 
\textbullet~For given positive quantities $a$ and $b$, the notation $a \lesssim b$ means the inequality $a\le C b$ for some positive constant; the constant $C$  depends only on $d$ and $p$, unless otherwise noted.

%%%%%%%%%%%%%%%%%%%%%%%%%%%%%%%%%%%%%%%%%%%%%%%%%%%%%%%%%%%%%%%%%%%%%%%%%%%%%%%%
\section{Basic properties of ground state}\label{section:2}

In this section, we give basic properties of the ground state $\Phi_{\omega}$. 
\par 
When $\omega$ is large, it would be natural to compare the equation \eqref{eq:1.1} with  
\begin{equation}\label{eq:1.15}
\Delta u +|u|^{\frac{4}{d-2}}u=0. 
\end{equation}
Note here that the Talenti function $W$ is a solution to \eqref{eq:1.15} in $\dot{H}^{1}(\mathbb{R}^{d})$. Moreover, it is known (see, e.g, \cite{AIIKN}) that we need to consider the rescaling $\widetilde{\Phi}_{\omega}$ (see \eqref{eq:1.10}), in stead of $\Phi_{\omega}$, for such a comparison. In particular, we easily see from Lemma \ref{17/07/16/16:37} that  
\begin{equation}\label{eq:2.13}
\|\widetilde{\Phi}_{\omega}\|_{L^{\infty}}
=
\widetilde{\Phi}_{\omega}(0)=1=\|W\|_{L^{\infty}}=W(0).
\end{equation}
We also verify that $\widetilde{\Phi}_{\omega}$ satisfies 
\begin{equation}\label{eq:2.1}
-\Delta \widetilde{\Phi} 
+ 
\alpha_\omega \widetilde{\Phi} 
- 
\beta_{\omega} \widetilde{\Phi}^{p} - \widetilde{\Phi}^{\frac{d+2}{d-2}} = 0, 
\end{equation}
where 
\begin{equation}\label{eq:1.14}
\alpha_{\omega}:=\omega M_{\omega}^{-\frac{4}{d-2}},
\qquad 
\beta_{\omega}:= M_{\omega}^{p-1-\frac{4}{d-2}}
.
\end{equation}
The following result given in Lemma 2.3 of \cite{AIIKN} tells us the asymptotic behavior of $M_{\omega}$, $\alpha_\omega$ and $\beta_{\omega}$ as $\omega \to \infty$: 
\begin{lemma}\label{proposition:2.3}
Assume $d= 3$ and $3<p <5$; or $d=4$ and $2\le p<3$; or $d\ge 5$ and $1<p<\frac{d+2}{d-2}$. Then 
\begin{align}
\label{eq:2.16}
&\lim_{\omega \to \infty} M_{\omega}= \infty,
\\[6pt]
\label{eq:2.17}
&\lim_{\omega \to \infty} \alpha_{\omega} 
= 
\lim_{\omega \to \infty} \beta_{\omega}
=0.
\end{align} 
\end{lemma}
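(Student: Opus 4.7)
The plan is to compare $\Phi_\omega$ with the rescaled Talenti profile as $\omega \to \infty$, reducing all three asymptotics to the fact that the action values $\mathcal{S}_\omega(\Phi_\omega)$ approach the Sobolev-critical minimum $\mathcal{S}_* := \frac{1}{d}\|\nabla W\|_{L^{2}}^{2}$.

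First, I would establish $\lim_{\omega\to\infty}\mathcal{S}_\omega(\Phi_\omega) = \mathcal{S}_*$. The upper bound $\limsup_{\omega \to \infty}\mathcal{S}_\omega(\Phi_\omega)\le \mathcal{S}_*$ comes from a trial function argument: take $W_\lambda(x) = \lambda^{(d-2)/2}W(\lambda x)$, multiply by the scalar that places it on the Nehari manifold of $\mathcal{S}_\omega$, and let $\lambda$ tend to infinity with $\omega$; since $p+1 < 2^{*}$, both $\|W_\lambda\|_{L^{p+1}}^{p+1}$ and $\omega\|W_\lambda\|_{L^{2}}^{2}$ become negligible compared to the critical pieces (this is precisely where the integrability assumptions on $p$ and $d$ enter, via requiring $W \in L^{p+1}$ and, for $d=3,4$, a suitable cutoff of $W$). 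The matching lower bound follows from combining the Nehari identity $\omega\|\Phi_\omega\|_{L^{2}}^{2}+\|\nabla\Phi_\omega\|_{L^{2}}^{2}=\|\Phi_\omega\|_{L^{p+1}}^{p+1}+\|\Phi_\omega\|_{L^{2^*}}^{2^*}$ with the Pohozaev identity for \eqref{eq:1.1} to rewrite
\begin{equation*}
\mathcal{S}_\omega(\Phi_\omega) \;=\; \frac{p-1}{2(p+1)}\|\Phi_\omega\|_{L^{p+1}}^{p+1}+\frac{1}{d}\|\Phi_\omega\|_{L^{2^*}}^{2^*},
\end{equation*}
and then applying the Sobolev inequality to the critical term together with the (standard) fact that any nontrivial solution of \eqref{eq:1.1} satisfies a uniform positive lower bound on $\|\nabla\Phi_\omega\|_{L^2}$.

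Second, from the convergence of the action and of the critical and gradient norms to those of $W$, I would deduce $M_\omega\to\infty$ by contradiction: if $M_\omega = \|\Phi_\omega\|_{L^\infty}$ stayed bounded along a subsequence, then elliptic regularity applied to \eqref{eq:1.1} together with the radial decay \eqref{17/08/13/11:05} would give $H^{1}$-compactness, but the linear term $\omega\Phi_\omega$ with $\omega\to\infty$ would force any weak limit to be zero, contradicting the non-vanishing of $\|\Phi_\omega\|_{L^{2^*}}$. Once $M_\omega\to\infty$ is known, $\beta_\omega\to 0$ is immediate from the strict subcritical inequality $p-1<\frac{4}{d-2}$, which holds under each of the three cases in the statement.

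Third, for $\alpha_\omega\to 0$ I would work with the rescaled profile $\widetilde{\Phi}_\omega$, which satisfies \eqref{eq:2.1} and is normalized by $\widetilde{\Phi}_\omega(0)=\|\widetilde{\Phi}_\omega\|_{L^{\infty}}=1$ and is positive and radially decreasing. If $\alpha_\omega$ did not vanish along some subsequence, a compactness argument applied to \eqref{eq:2.1} (using $\beta_\omega \to 0$ from the previous step, the uniform $L^\infty$-bound, and the radial structure) would produce a nontrivial positive radial limit $u_\infty$ solving $-\Delta u + \alpha_{*} u = u^{(d+2)/(d-2)}$ with $u(0)=1$. Comparing the (scale-invariant) critical Sobolev quotient of $u_\infty$ with the already established limit $\mathcal{S}_*$ of the action would force $\alpha_{*}=0$, a contradiction.

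The main obstacle I anticipate is the third step: pinning down the vanishing of the linear coefficient $\alpha_\omega$ despite the loss of scale invariance of \eqref{eq:1.1}. All the preparatory work in Steps 1 and 2 --- in particular the convergence of $\|\nabla\Phi_\omega\|_{L^{2}}^{2}$ to $\|\nabla W\|_{L^{2}}^{2}$ and the pointwise normalization $\widetilde{\Phi}_\omega(0)=1$ --- is exactly what is needed to rule out a nonzero limit of $\alpha_\omega$ and to identify $W$ as the unique limiting profile.
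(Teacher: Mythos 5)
The paper itself does not prove this lemma; it is imported verbatim from Lemma~2.3 of \cite{AIIKN}, so your argument has to be judged on its own. Your three-step architecture (energy level $\to$ blow-up of $M_{\omega}$ $\to$ vanishing of $\alpha_{\omega}$) is a viable route, and Step~1 is the right engine. Two corrections there: the displayed identity for $\mathcal{S}_{\omega}(\Phi_{\omega})$ follows from the Nehari identity alone; where Pohozaev is genuinely needed is for the companion identities $\|\nabla \Phi_{\omega}\|_{L^{2}}^{2}=d\,\mathcal{S}_{\omega}(\Phi_{\omega})$ and $\omega\|\Phi_{\omega}\|_{L^{2}}^{2}=\tfrac{d+2-p(d-2)}{2(p+1)}\|\Phi_{\omega}\|_{L^{p+1}}^{p+1}$, which, combined with Gagliardo--Nirenberg, are what actually give $\|\Phi_{\omega}\|_{L^{p+1}}^{p+1}\to 0$ and $\omega\|\Phi_{\omega}\|_{L^{2}}^{2}\to 0$; without these the ``Sobolev inequality plus a lower bound on $\|\nabla\Phi_{\omega}\|_{L^{2}}$'' does not yet close the lower bound $\liminf\mathcal{S}_{\omega}(\Phi_{\omega})\ge \mathcal{S}_{*}$. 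Also, the restrictions on $(d,p)$ are not used where you say: $W\in L^{p+1}$ already for $p>2$ when $d=3$ and for every $p>1$ when $d=4$, so the hypotheses $p>3$ ($d=3$) and $p\ge 2$ ($d=4$) enter through the existence and convergence theory being quoted, not through the integrability of the trial function.

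Two further points. In Step~2 you invoke the decay estimate \eqref{17/08/13/11:05}, whose constants $C(\omega)$ and $\delta(\omega)$ depend on $\omega$; it cannot be used as a uniform compactness tool along $\omega\to\infty$. But no compactness is needed: if $M_{\omega}\le C$ along a subsequence, then $\|\Phi_{\omega}\|_{L^{2^{*}}}^{2^{*}}\le M_{\omega}^{\frac{4}{d-2}}\|\Phi_{\omega}\|_{L^{2}}^{2}\to 0$, contradicting $\|\Phi_{\omega}\|_{L^{2^{*}}}^{2^{*}}\to\|\nabla W\|_{L^{2}}^{2}$; more elementarily still, evaluating \eqref{eq:1.1} at the maximum point $x=0$, where $-\Delta\Phi_{\omega}(0)\ge 0$, gives $\omega\le M_{\omega}^{p-1}+M_{\omega}^{\frac{4}{d-2}}$, which yields \eqref{eq:2.16} (and the boundedness of $\alpha_{\omega}$) with no variational input at all. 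The one genuine gap is the endgame of Step~3: the local limit $u_{\infty}$ is a priori only in $\dot{H}^{1}\cap L^{\infty}$, and your Sobolev-quotient comparison needs the Nehari/Pohozaev identities for $u_{\infty}$, hence $\alpha_{*}\|u_{\infty}\|_{L^{2}}^{2}<\infty$ --- which is not automatic (recall $W\notin L^{2}$ for $d=3,4$, so membership in $L^{2}$ is exactly what distinguishes $\alpha_{*}>0$ from $\alpha_{*}=0$). You must first prove that a positive radial decreasing $\dot{H}^{1}$-solution of $-\Delta u+\alpha_{*}u=u^{\frac{d+2}{d-2}}$ with $\alpha_{*}>0$ decays exponentially (radial lemma to get $u_{\infty}(x)\to 0$, then a comparison argument once $u_{\infty}^{\frac{4}{d-2}}<\alpha_{*}/2$); only then does either Pohozaev ($\alpha_{*}\|u_{\infty}\|_{L^{2}}^{2}=0$) or your quotient comparison ($\|\nabla u_{\infty}\|_{L^{2}}^{2}>\|\nabla W\|_{L^{2}}^{2}$ against $\|\nabla u_{\infty}\|_{L^{2}}^{2}\le\liminf\|\nabla\widetilde{\Phi}_{\omega}\|_{L^{2}}^{2}=\|\nabla W\|_{L^{2}}^{2}$) produce the contradiction. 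With that step supplied, the proposal is sound.
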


We expect from \eqref{eq:2.13}, \eqref{eq:2.1} and Lemma \ref{proposition:2.3} that $\widetilde{\Phi}_{\omega}$ converges to $W$ as $\omega\to \infty$. This is true and given in Proposition 2.1 of \cite{AIIKN}:  
\begin{lemma}\label{18/09/05/01:01}
Assume $d= 3$ and $3<p <5$; or $d=4$ and $2\le p<3$; or $d\ge 5$ and $1<p<\frac{d+2}{d-2}$. Then, it holds that 
\begin{equation}\label{eq:2.2}
\lim_{\omega \to \infty} 
\big\| \TPho - W  \big\|_{\dH} = 0.
\end{equation}
\end{lemma}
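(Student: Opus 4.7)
The plan is to extract a weak $\dH$ limit of $\TPho$, identify it as $W$ via the classification of positive solutions to the critical elliptic equation, and then upgrade the convergence to $\dH$-strong through an energy identity.

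First I would establish uniform bounds on $\|\nabla\TPho\|_{L^2}$, both above and away from zero. Testing \eqref{eq:2.1} against $\TPho$ yields
\begin{equation*}
\|\nabla\TPho\|_{L^2}^2 + \alpha_\omega\|\TPho\|_{L^2}^2 = \beta_\omega\|\TPho\|_{L^{p+1}}^{p+1} + \|\TPho\|_{L^{2^*}}^{2^*},
\end{equation*}
and the Pohozaev identity for \eqref{eq:2.1} gives a second linear relation among the same four positive quantities. Combining them and invoking $\alpha_\omega,\beta_\omega\to 0$ from Lemma~\ref{proposition:2.3}, the pointwise bound $\TPho\le 1$ that follows from \eqref{eq:2.13} and the radial monotonicity in Lemma~\ref{17/07/16/16:37}, and the Sobolev inequality $\|u\|_{L^{2^*}}^{2^*}\lesssim \|\nabla u\|_{L^2}^{2^*}$, I would conclude that both $\|\nabla\TPho\|_{L^2}$ and $\|\TPho\|_{L^{2^*}}$ are uniformly comparable to positive constants for large $\omega$.

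Next, up to a subsequence $\TPho \rightharpoonup U$ in $\dH$ for some $U\in\dH$. The nonlinearity in \eqref{eq:2.1} is pointwise bounded by an absolute constant (using $\TPho\le 1$), so standard elliptic regularity provides uniform $C^{1,\alpha}_{\mathrm{loc}}$ bounds and I may further assume $\TPho\to U$ locally uniformly; in particular $U(0)=1$ and $U\not\equiv 0$. Passing to the limit in \eqref{eq:2.1} in the distributional sense, $U$ is a nonnegative, radial, classical solution of $-\Delta U = U^{(d+2)/(d-2)}$, and the Caffarelli--Gidas--Spruck classification together with the normalization $U(0)=1$ forces $U=W$. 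Uniqueness of the subsequential limit then promotes this to weak convergence of the full family $\TPho \rightharpoonup W$ in $\dH$.

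Finally, weak $\dH$-convergence upgrades to strong once one proves $\|\nabla\TPho\|_{L^2}^2 \to \|\nabla W\|_{L^2}^2$. Via Pohozaev applied to both \eqref{eq:2.1} and to $-\Delta W=W^{(d+2)/(d-2)}$, this reduces to the two statements $\|\TPho\|_{L^{2^*}}^{2^*}\to \|W\|_{L^{2^*}}^{2^*}$ and $\alpha_\omega\|\TPho\|_{L^2}^2+\beta_\omega\|\TPho\|_{L^{p+1}}^{p+1}\to 0$. I expect the main obstacle to be the first of these: since the nonlinearity in \eqref{eq:2.1} is Sobolev-critical, a priori $\TPho$ could develop additional bubbles at different scales, so that $\|\TPho\|_{L^{2^*}}^{2^*}$ overshoots $\|W\|_{L^{2^*}}^{2^*}$ in the limit. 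To rule this out I would combine radial monotonicity, the pointwise bound $\TPho\le 1$, and a comparison/bootstrap argument in \eqref{eq:2.1} (using the positive coefficient $\alpha_\omega$ as a confining term) to produce uniform spatial decay of $\TPho$ at a rate at least as fast as that of $W$. This uniform decay is precisely what is needed to rule out a nontrivial defect measure in the Brezis--Lieb splitting, and it also controls the auxiliary errors $\alpha_\omega\|\TPho\|_{L^2}^2$ and $\beta_\omega\|\TPho\|_{L^{p+1}}^{p+1}$, closing the energy identity.
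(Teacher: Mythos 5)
The paper does not prove this lemma at all: it is quoted verbatim as Proposition~2.1 of \cite{AIIKN}, so your proposal has to be judged on its own merits. Its skeleton (a priori bounds, weak $\dH$ limit, local elliptic estimates plus the Caffarelli--Gidas--Spruck classification and the normalization $\TPho(0)=1$ to identify the limit as $W$, then an energy identity to upgrade to strong convergence) is the standard and correct outline, and the identification step is fine. But there are two genuine gaps, both traceable to the same omission: you never use the fact that $\Phi_\omega$ is a \emph{ground state} (least-action solution), and without that the argument does not close. First, the uniform upper bound on $\|\nabla\TPho\|_{L^{2}}=\|\nabla\Phi_\omega\|_{L^{2}}$ does not follow from the Nehari and Pohozaev identities together with $\TPho\le 1$ and Sobolev: those only relate the four quantities to one another, and $\|\TPho\|_{L^{2^*}}^{2^*}$ (hence the gradient) could a priori be arbitrarily large for a general positive solution. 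The standard source of the upper bound is the variational characterization, e.g.\ $\tfrac1d\|\nabla\Phi_\omega\|_{L^{2}}^{2}\le \mathcal{S}_{\omega}(\Phi_\omega)\le \tfrac1d S^{d/2}+o(1)$, obtained by testing the minimization level with truncated rescalings of $W$.

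Second, and more seriously, the step you yourself flag as the main obstacle --- excluding energy loss at other scales so that $\|\TPho\|_{L^{2^*}}^{2^*}\to\|W\|_{L^{2^*}}^{2^*}$ and $\alpha_\omega\|\TPho\|_{L^{2}}^{2}+\beta_\omega\|\TPho\|_{L^{p+1}}^{p+1}\to 0$ --- is not actually resolved by your proposed mechanism. A comparison argument ``using the positive coefficient $\alpha_\omega$ as a confining term'' cannot yield a decay rate uniform in $\omega$, because $\alpha_\omega\to 0$; and the uniform bound $\TPho(x)\lesssim(1+|x|)^{-(d-2)}$ you are after is precisely Lemma~\ref{18/09/05/01:05} (Proposition~3.1 of \cite{AIIKN}), which in that reference is proved \emph{after}, and using, the $\dH$ convergence you are trying to establish --- so this route is circular as a self-contained proof. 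The clean way to close the energy identity is again variational: the least-action level $m_\omega$ satisfies $\limsup_{\omega\to\infty}m_\omega\le\tfrac1d S^{d/2}$, while Nehari, Pohozaev and Sobolev give $m_\omega\ge \tfrac1d\|\nabla\Phi_\omega\|_{L^{2}}^{2}$ and $\|\nabla\Phi_\omega\|_{L^{2}}^{2}\le \|\Phi_\omega\|_{L^{2^*}}^{2^*}\le S^{-2^*/2}\|\nabla\Phi_\omega\|_{L^{2}}^{2^*}$; combining these pins down $\lim_{\omega\to\infty}\|\nabla\TPho\|_{L^{2}}^{2}=S^{d/2}=\|\nabla W\|_{L^{2}}^{2}$, and weak convergence plus convergence of norms gives \eqref{eq:2.2}. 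I recommend you restructure the bound and the final step around the minimality of the action rather than around a decay estimate you cannot yet prove.
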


We also have the following uniform decay estimate (Proposition 3.1 of \cite{AIIKN}): 
\begin{lemma}\label{18/09/05/01:05}
Assume $d=3$ and $3<p <5$, or $d\ge 4$ and $1<p<\frac{d+2}{d-2}$. Then, there exist $\omega_{dec}>0$ and  $C_{dec}>0$ such that for any $\omega >\omega_{dec}$ and any $x\in \mathbb{R}^{d}$, 
\begin{equation}\label{eq:3.1}
\widetilde{\Phi}_{\omega}(x) \le C_{dec} \left(1 +|x| \right)^{-(d-2)}
. 
\end{equation}
\end{lemma}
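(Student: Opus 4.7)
The plan is to convert the energy-level convergence $\widetilde{\Phi}_\omega\to W$ in $\dH$ (Lemma~\ref{18/09/05/01:01}) into a uniform pointwise decay estimate by combining the Newton-potential form of the rescaled equation \eqref{eq:2.1} with a bootstrap in the pointwise decay exponent, using the decay $W(x)\sim|x|^{-(d-2)}$ of the Talenti function as the benchmark.

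For each fixed $\omega>\omega_{1}$, Lemma~\ref{17/07/16/16:37} (applied to $\Phi_\omega$ and then rescaled via \eqref{eq:1.10}) guarantees that $\widetilde{\Phi}_\omega$ is $C^{2}$, radial, strictly decreasing, and exponentially decaying at infinity, so the Newton-potential representation for $-\Delta\widetilde{\Phi}_\omega=-\alpha_\omega\widetilde{\Phi}_\omega+\beta_\omega\widetilde{\Phi}_\omega^{p}+\widetilde{\Phi}_\omega^{(d+2)/(d-2)}$ is legitimate. Dropping the non-positive $-\alpha_\omega\widetilde{\Phi}_\omega$ contribution yields the pointwise inequality
\begin{equation*}
\widetilde{\Phi}_\omega(x)\;\le\;c_{d}\int_{\Rd}\frac{\beta_\omega\widetilde{\Phi}_\omega(y)^{p}+\widetilde{\Phi}_\omega(y)^{\frac{d+2}{d-2}}}{|x-y|^{d-2}}\,dy,
\end{equation*}
which will be the engine of the bootstrap.

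As a starting decay, radial monotonicity of $\widetilde{\Phi}_\omega$ combined with the uniform $\dH$-bound from Lemma~\ref{18/09/05/01:01} yields, by Strauss' radial-Sobolev inequality, $\widetilde{\Phi}_\omega(x)\le C_{0}|x|^{-(d-2)/2}$ for $|x|\ge 1$, uniformly in $\omega$ large. From an inductive assumption $\widetilde{\Phi}_\omega(x)\le C(1+|x|)^{-\gamma}$ with $\gamma>(d-2)/2$, splitting the integral above into $\{|y|\le|x|/2\}$ (where $|x-y|\gtrsim|x|$ brings out a kernel factor $|x|^{-(d-2)}$) and $\{|y|>|x|/2\}$ (where radial monotonicity permits factoring out $\widetilde{\Phi}_\omega(|x|/2)^{4/(d-2)}$), and using the subcriticality $p<(d+2)/(d-2)$ together with $\beta_\omega\to 0$ to absorb the $\beta_\omega\widetilde{\Phi}_\omega^{p}$ contribution, one obtains the improved bound $\widetilde{\Phi}_\omega(x)\le C'(1+|x|)^{-\gamma'}$ with
\begin{equation*}
\gamma'\;=\;\min\!\Big(d-2,\;\gamma\tfrac{d+2}{d-2}-2\Big).
\end{equation*}
Since the map $\gamma\mapsto\gamma(d+2)/(d-2)-2$ is strictly expanding above its unique fixed point $(d-2)/2$, finitely many iterations saturate $\gamma$ at $d-2$, which is precisely \eqref{eq:3.1}.

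The main obstacle is that the rate $(d-2)/2$ produced by the elementary Strauss estimate coincides exactly with the fixed point of the bootstrap map, so the iteration cannot ignite on its own. To break this tie I would pass to the Kelvin transform $v_\omega(y):=|y|^{-(d-2)}\widetilde{\Phi}_\omega(y/|y|^{2})$, which encodes the desired decay at infinity as uniform boundedness of $v_\omega$ near $y=0$. The conformal invariance of the critical nonlinearity preserves the term $v_\omega^{(d+2)/(d-2)}$, while the linear and subcritical terms of \eqref{eq:2.1} transform into singular-weight terms $\alpha_\omega|y|^{-4}v_\omega$ and $\beta_\omega|y|^{(d-2)p-(d+2)}v_\omega^{p}$, whose coefficients $\alpha_\omega,\beta_\omega$ tend to $0$ by Lemma~\ref{proposition:2.3}. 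Local elliptic regularity for $v_\omega$ on fixed annuli, combined with Hardy-type estimates near the origin and the smallness of $\alpha_\omega,\beta_\omega$, then yields the uniform boundedness of $v_\omega$ on a neighbourhood of $0$, which translates back, via the definition of the Kelvin transform, into the decay \eqref{eq:3.1} for $\widetilde{\Phi}_\omega$; this step is the technical crux of the argument and, in particular, is what allows one to bypass the borderline fixed point of the bootstrap.
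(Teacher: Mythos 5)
The paper itself offers no proof of this lemma: it is quoted verbatim as Proposition 3.1 of \cite{AIIKN}, so there is no internal argument to compare against, and your proposal must stand on its own. It does not, because its central mechanism --- the Newton-potential bootstrap --- has a genuine gap beyond the fixed-point degeneracy you already identify for the critical term. You propose to ``absorb'' the subcritical term using $p<\frac{d+2}{d-2}$ and $\beta_\omega\to0$, but smallness of the prefactor $\beta_\omega$ cannot improve the spatial decay of an integral. Concretely, in $d=3$ with the initial Strauss decay $\gamma=\frac{d-2}{2}=\frac12$, the integral $\int_{\mathbb{R}^{3}}|x-y|^{-1}(1+|y|)^{-\gamma p}\,dy$ is \emph{divergent} whenever $\gamma p\le 2$, i.e.\ for all $3<p\le 4$, so the right-hand side of your ``engine'' inequality is $+\infty$ and the iteration cannot begin. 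To make that term finite one must keep the Yukawa kernel $e^{-\sqrt{\alpha_\omega}\,|x-y|}|x-y|^{-1}$ instead of dominating it by the Newton kernel, and the resulting estimate then hinges on the relative rates of $\alpha_\omega,\beta_\omega\to0$ (for $d=3$ one has $\beta_\omega=\varepsilon$ while $\alpha_\omega\sim C_1\varepsilon^{2}$ by Theorem \ref{CG-1} and \eqref{scale}), which is precisely the uniformity-in-$\omega$ issue the lemma is about. The same term also obstructs the stated range in higher dimensions: after the Kelvin transform the subcritical weight $|y|^{(d-2)p-(d+2)}$ fails to lie in $L^{d/2}_{loc}$ near $y=0$ unless $p>\frac{d}{d-2}$, excluding part of the hypothesis $1<p<\frac{d+2}{d-2}$ for $d=4,5$.

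The Kelvin-transform endgame, which you rightly call the crux, is also not supported by the tools you name. In $d=3$ the transformed linear term carries the weight $\alpha_\omega|y|^{-4}$, far beyond the reach of any Hardy inequality (which handles $|y|^{-2}$); the way out is that this term has a favorable sign and can simply be discarded when proving upper bounds, but that is not what you say. What is actually needed near $y=0$ is a Brezis--Kato/Moser local-boundedness argument for $-\Delta v\le Vv$ with the critical potential $V=v^{4/(d-2)}$, and its uniformity in $\omega$ rests on the uniform smallness of $\|\widetilde{\Phi}_{\omega}\|_{L^{2^{*}}(\{|x|>R\})}$ for large $R$ --- a consequence of Lemma \ref{18/09/05/01:01} that your argument never invokes. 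Without a correct treatment of the $\beta_\omega\widetilde{\Phi}_{\omega}^{p}$ term and a uniform local-boundedness lemma replacing the vague ``Hardy-type estimates,'' the proposal does not close; as written it is a plausible outline of a strategy rather than a proof.
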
  
We can derive the  following convergence result from Lemma \ref{18/09/05/01:01} and Lemma \ref{18/09/05/01:05} (see Corollary 3.1 of \cite{AIIKN}):
\begin{corollary}\label{theorem:3.1}
Assume $d= 3$ and $3<p <5$; or $d=4$ and $2\le p<3$; or $d\ge 5$ and $1<p<\frac{d+2}{d-2}$. Then, for any $q> \frac{d}{d-2}$, it holds that  
\begin{equation}\label{eq:3.2}
\lim_{\omega \to \infty} \| \widetilde{\Phi}_{\omega} -W \|_{L^{q}}=0
.
\end{equation}
\end{corollary}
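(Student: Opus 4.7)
The plan is to combine the uniform pointwise decay from Lemma \ref{18/09/05/01:05} with the $\dot{H}^1$ convergence from Lemma \ref{18/09/05/01:01} via a dominated convergence argument. From the explicit form \eqref{eq:1.16} of the Talenti function, $W(x) \lesssim (1+|x|)^{-(d-2)}$, and Lemma \ref{18/09/05/01:05} supplies the same pointwise majorant for $\widetilde{\Phi}_\omega$, uniformly for $\omega > \omega_{dec}$. Consequently, for all such $\omega$,
\begin{equation*}
|\widetilde{\Phi}_\omega(x) - W(x)|^{q} \lesssim (1+|x|)^{-(d-2)q}.
\end{equation*}
The key observation is that $(1+|x|)^{-(d-2)q}$ is integrable on $\mathbb{R}^d$ precisely when $(d-2)q > d$, that is, when $q > \frac{d}{d-2}$, which is exactly the hypothesis of the corollary.

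To conclude, I would argue by contradiction: suppose \eqref{eq:3.2} failed, so that there exist $\varepsilon>0$ and $\omega_n\to\infty$ with $\|\widetilde{\Phi}_{\omega_n} - W\|_{L^q} \ge \varepsilon$ for all $n$. By Lemma \ref{18/09/05/01:01} together with the Sobolev embedding $\dot{H}^1(\mathbb{R}^d) \hookrightarrow L^{2^*}(\mathbb{R}^d)$, we have $\widetilde{\Phi}_{\omega_n} \to W$ in $L^{2^*}(\mathbb{R}^d)$. Extracting a further subsequence (which I will not relabel), $\widetilde{\Phi}_{\omega_n} \to W$ almost everywhere on $\mathbb{R}^d$. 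Applying the dominated convergence theorem with the integrable majorant above then yields $\|\widetilde{\Phi}_{\omega_n} - W\|_{L^q} \to 0$, a contradiction. Passing from subsequences to the full parameter $\omega\to\infty$ is handled by this same contradiction argument, so no additional work is required.

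There is no genuinely hard step here; the argument is essentially a combination of two already-quoted ingredients. The only subtle point worth highlighting is the sharpness of the threshold $q > \frac{d}{d-2}$: since $W \notin L^{d/(d-2)}(\mathbb{R}^d)$, no dominated-convergence argument based on the Talenti-type decay can reach $q = \frac{d}{d-2}$, and the Sobolev embedding alone only delivers $L^{2^*}$ convergence. It is precisely the uniform decay estimate of Lemma \ref{18/09/05/01:05} that bridges the gap between $\dot{H}^1$ convergence and $L^q$ convergence in the range $\frac{d}{d-2} < q < 2^*$, which is the nontrivial range in low dimensions.
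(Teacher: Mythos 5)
Your argument is correct and follows essentially the same route the paper indicates: the paper does not write out a proof but states that the corollary is derived from the $\dot{H}^1$-convergence of Lemma \ref{18/09/05/01:01} and the uniform decay of Lemma \ref{18/09/05/01:05} (citing Corollary 3.1 of \cite{AIIKN}), which is precisely the combination you use, with dominated convergence along an a.e.-convergent subsequence supplying the bridge from $L^{2^{*}}$-convergence to $L^{q}$-convergence for $q>\frac{d}{d-2}$. No gaps; your remark on the sharpness of the threshold is also consistent with the paper's discussion of why $W\notin L^{2}$ causes difficulties in low dimensions.
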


%%%%%%%%%%%%%%%%%%%%%%%%%%%%%%%%%%%%%%%%%%%%%%%%%%%%%%%%%%%%%%%%%%%%%%%%%%%%%%%%
\section{Proof of Theorem \ref{17/08/03/11:47}}
\label{17/08/03/10:55}

In this section, we give a proof of Theorem \ref{17/08/03/11:47}, namely we show the existence of positive eigenvalue of $-i\mathscr{L}$. 
\par 
We introduce notation used in this section: 
\\
\noindent 
{\bf Notation}~We use $C(\omega)$ to denote several large positive constant depending only on $d$, $p$ and $\omega$ which may vary from line to line. Moreover,  $c(\omega)$ means $1/C(\omega)$. 

%\vspace{1zw}

We remark that the operator $-i \mathscr{L}_{\omega}$ is not self-adjoint in $L_{real}^{2}(\mathbb{R}^{d})$. On the other hand, the operators $L_{\omega,+}$ and $L_{\omega,-}$ (see \eqref{13/02/02/17:56} and \eqref{13/02/02/17:57}) are self-adjoint in $L_{real}^{2}(\mathbb{R}^{d})$. We have the relationship  
\begin{equation}\label{13/02/20/9:45}
\mathscr{L}_{\omega}u
=
L_{\omega, +}\Re[u]+ i L_{\omega,-}\Im[u] .
\end{equation}
In order to prove Theorem \ref{17/08/03/11:47}, we need the following lemma:\begin{lemma}\label{13/03/18/15:50}
Assume $d= 3$ and $3<p <5$; or $d\ge 4$ and $1<p<\frac{d+2}{d-2}$. Then, for any $\omega >0$, the operator $L_{\omega,-}$ is  non-negative,  and 
\begin{equation}\label{18/09/06/02:52}
{\rm Ker}\, L_{\omega,-}={\rm span}\{ \Phi_{\omega}\}.
\end{equation} 
Furthermore, there exists $c(\omega)>0$ depending only on $d$, $p$ and $\omega$ such that for any nontrivial function $u\in H^{1}(\mathbb{R}^{d})$ with $(u,\Phi_{\omega})_{L_{real}^{2}}=0$, 
\begin{equation}\label{13/04/12/12:19}
\langle L_{\omega,-}u,u \rangle
\ge 
c(\omega) \| u \|_{H^{1}}^{2}.
\end{equation}
\end{lemma}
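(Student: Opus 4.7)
\medskip

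The plan is to reduce every claim to standard Schr\"odinger-operator facts after observing that $\Phi_\omega$ itself sits in the kernel.

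\medskip

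\textbf{Step 1 (kernel contains $\Phi_\omega$).} I would first divide the equation \eqref{eq:1.1} satisfied by $\Phi_\omega$ by $\Phi_\omega$ conceptually: rewriting the nonlinear terms as $\Phi_\omega^{p}=\Phi_\omega^{p-1}\Phi_\omega$ and $\Phi_\omega^{(d+2)/(d-2)}=\Phi_\omega^{4/(d-2)}\Phi_\omega$, the equation becomes $L_{\omega,-}\Phi_\omega=0$. By Lemma \ref{17/07/16/16:37}, $\Phi_\omega\in H^{2}(\mathbb{R}^{d})$ and is strictly positive, so $\Phi_\omega\in{\rm Ker}\,L_{\omega,-}$ as a genuine $L^{2}$-eigenfunction.

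\medskip

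\textbf{Step 2 (non-negativity and simplicity via Perron--Frobenius).} Since the potential $-\Phi_\omega^{p-1}-\Phi_\omega^{4/(d-2)}$ is bounded and decays exponentially by Lemma \ref{17/07/16/16:37}, Weyl's theorem yields $\sigma_{\mathrm{ess}}(L_{\omega,-})=[\omega,\infty)$. Because $L_{\omega,-}$ is a real Schr\"odinger operator on $\mathbb{R}^{d}$, its lowest eigenvalue (if any) is simple and admits a strictly positive eigenfunction (Perron--Frobenius / ground-state positivity for Schr\"odinger operators). The positive function $\Phi_\omega$ is an eigenfunction with eigenvalue $0$, so $0$ must be the bottom of the spectrum and it is simple. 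This simultaneously proves $L_{\omega,-}\ge 0$ and ${\rm Ker}\,L_{\omega,-}={\rm span}\{\Phi_\omega\}$.

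\medskip

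\textbf{Step 3 ($L^{2}$ spectral gap on $\Phi_\omega^{\perp}$).} Because $0$ is simple and isolated from $\sigma_{\mathrm{ess}}(L_{\omega,-})=[\omega,\infty)$, the next point of the spectrum $\lambda_{1}(\omega)>0$ is strictly positive (it is either a second eigenvalue in $(0,\omega)$ or equals $\omega$). The spectral theorem then gives, for every $u\in H^{1}(\mathbb{R}^{d})$ with $(u,\Phi_\omega)_{L^{2}_{\mathrm{real}}}=0$,
\begin{equation*}
\langle L_{\omega,-}u,u\rangle \;\ge\; \lambda_{1}(\omega)\,\|u\|_{L^{2}}^{2}.
\end{equation*}

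\medskip

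\textbf{Step 4 (upgrade to $H^{1}$-coercivity).} To deduce \eqref{13/04/12/12:19}, I would combine the $L^{2}$ bound from Step 3 with a fraction of the trivial identity
\begin{equation*}
\langle L_{\omega,-}u,u\rangle \;=\; \|\nabla u\|_{L^{2}}^{2}+\omega\|u\|_{L^{2}}^{2}-\int_{\mathbb{R}^{d}}\bigl(\Phi_\omega^{p-1}+\Phi_\omega^{4/(d-2)}\bigr)|u|^{2}\,dx.
\end{equation*}
Writing $\langle L_{\omega,-}u,u\rangle=(1-\theta)\langle L_{\omega,-}u,u\rangle+\theta\langle L_{\omega,-}u,u\rangle$ and bounding the potential term by $C(\omega)\|u\|_{L^{2}}^{2}$ (using $\Phi_\omega\in L^{\infty}$), one obtains for $u\perp\Phi_\omega$
\begin{equation*}
\langle L_{\omega,-}u,u\rangle\;\ge\;\theta\|\nabla u\|_{L^{2}}^{2}+\bigl[(1-\theta)\lambda_{1}(\omega)+\theta\omega-\theta C(\omega)\bigr]\|u\|_{L^{2}}^{2},
\end{equation*}
which, for $\theta>0$ small enough depending on $\omega$, yields \eqref{13/04/12/12:19} with some $c(\omega)>0$.

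\medskip

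\textbf{Main obstacle.} The only non-routine step is Step~2: to apply Perron--Frobenius one must know that $\Phi_\omega$ is actually the ground-state eigenfunction rather than an excited one. This uses the strict positivity from Lemma \ref{17/07/16/16:37} together with the uniqueness (up to sign) of the ground state. Once this positivity argument is in place, the rest is a standard spectral-gap/coercivity bootstrap.
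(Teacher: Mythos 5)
Your proposal is correct and follows essentially the same route as the paper, which proves this lemma in one sentence by citing Lemma 8.1 of Cazenave (the Perron--Frobenius/ground-state-positivity statement you invoke in Step 2), the positivity of $\Phi_{\omega}$, the identity $L_{\omega,-}\Phi_{\omega}=0$, the decay of $\Phi_{\omega}$, and Weyl's essential spectrum theorem. Your Steps 3 and 4 merely fill in the standard spectral-gap and $H^{1}$-coercivity details that the paper leaves implicit.
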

Lemma \ref{13/03/18/15:50} can be proved by using Lemma 8.1 of \cite{Cazenave}, the positivity of $\Phi_{\omega}$, $L_{\omega,-}\Phi_{\omega}=0$, $\lim_{|x|\to \infty}\Phi_{\omega}(x)=0$ and Weyl's essential spectrum theorem.

We see from Lemma \ref{13/03/18/15:50} that $L_{\omega,-}$ has a unique square roof $L_{\omega,-}^{\frac{1}{2}}$ with domain $H^{1}(\mathbb{R}^{d})$. 
\par 
We introduce operators related to the rescaled equation \eqref{eq:1.1}: 
\begin{align}
\label{13/03/30/11:40}
&\widetilde{L}_{\omega,+}
:= 
-\Delta 
+
\alpha_{\omega}
-
p \beta_{\omega} \widetilde{\Phi}_{\omega}^{p-1}
-
\frac{d+2}{d-2} \widetilde{\Phi}_{\omega} 
^{\frac{4}{d-2}},
\\[6pt]
\label{13/04/12/14:40}
&\widetilde{L}_{\omega,-}
:= 
-\Delta +\alpha_{\omega} - \beta_{\omega} \widetilde{\Phi}_{\omega}^{p-1}
- \widetilde{\Phi}_{\omega}^{\frac{4}{d-2}}.
\end{align}
Furthermore, we define $\widetilde{\mathscr{L}}_{\omega}$ to be that for any $u \in H^{2}(\mathbb{R}^{d})$, 
\begin{equation}\label{18/09/06/20:33}
\widetilde{\mathscr{L}}_{\omega}u
=
\widetilde{L}_{\omega,+}\Re[u] + i \widetilde{L}_{\omega,-}\Im[u].
\end{equation}
Note that if we have the relationship $f(x)= M_{\omega} \widetilde{f} (M_{\omega}^{\frac{2}{d-2}}x)$ between functions $f$ and $\widetilde{f}$, then  
\begin{equation}\label{18/09/06/20:42}
\mathscr{L}_{\omega}f(x)
=
M_{\omega}^{\frac{d+2}{d-2}}  
\widetilde{\mathscr{L}}_{\omega} \widetilde{f} (M_{\omega}^{\frac{2}{d-2}}x). 
\end{equation}
Moreover, Lemma \ref{13/03/18/15:50} implies that $\widetilde{L}_{\omega,-}$ is non-negative,   
\begin{align}
\label{17/08/14/12:09}
&{\rm Ker}~\widetilde{L}_{\omega,-}={\rm span}~\{\widetilde{\Phi}_{\omega}\},
\\[6pt]
\label{18/09/06/22:02}
&\langle \widetilde{L}_{\omega,-} v , v \rangle 
\ge 
c(\omega) \|v\|_{H^{1}}^{2} \quad \mbox{for all $v\in H^{1}(\mathbb{R}^{d})$ with $\langle v, \widetilde{\Phi}_{\omega} \rangle=0$}
. 
\end{align}

Now, we are in a position to prove Theorem \ref{17/08/03/11:47}. 

\begin{proof}[Proof of Theorem \ref{17/08/03/11:47}]
It suffices to prove that there exists a nontrivial real-valued function $f \in H^{4}(\mathbb{R}^{d})$ and $\mu>0$ such that 
\begin{equation}\label{17/08/05/11:50}
\widetilde{L}_{\omega,-}^{\frac{1}{2}}\widetilde{L}_{\omega,+}\widetilde{L}_{\omega,-}^{\frac{1}{2}}
f= -\mu^{2} f
.
\end{equation} 
Indeed, putting   
\begin{equation}\label{17/08/05/17:02}
g 
:=\widetilde{L}_{\omega,-}^{\frac{1}{2}}
f 
-
i\frac{1}{\mu} \widetilde{L}_{\omega,+} \widetilde{L}_{\omega,-}^{\frac{1}{2}}
f,
\end{equation}
and using \eqref{17/08/05/11:50}, we obtain 
\begin{equation}\label{17/08/05/11:53} 
-i\widetilde{\mathcal{L}}_{\omega} g 
=
-i \widetilde{L}_{\omega,+} \widetilde{L}_{\omega,-}^{\frac{1}{2}}f
-
\frac{1}{\mu}\widetilde{L}_{\omega,-} \widetilde{L}_{\omega,+}
\widetilde{L}_{\omega,-}^{\frac{1}{2}}f
=
\mu 
\Big(
-i\frac{1}{\mu} \widetilde{L}_{\omega,+} \widetilde{L}_{\omega,-}^{\frac{1}{2}}f +
\widetilde{L}_{\omega,-}^{\frac{1}{2}}f 
\Big)
=
\mu g 
,
\end{equation}
which together with \eqref{18/09/06/20:42} yields the desired result. Note here that \eqref{17/08/05/11:50} tells us that $\widetilde{L}_{\omega,-}^{\frac{1}{2}}f$ is nontrivial and therefore so is $g$. 

We shall prove \eqref{17/08/05/11:50}. To this end, we introduce 
\begin{equation}\label{18/09/06/04:47}
\nu_{\omega}:=
\inf\Bigm\{
\frac{\langle 
\widetilde{L}_{\omega,-}^{\frac{1}{2}} \widetilde{L}_{\omega,+} \widetilde{L}_{\omega,-}^{\frac{1}{2}}f, f
\rangle}{\| f \|_{L^{2}}^{2}}
\colon f\in H^{4}(\mathbb{R}^{d}),~\langle f, \widetilde{\Phi}_{\omega} \rangle=0  
\Bigm\}
. 
\end{equation}
We can verify that the minimizer for the problem \eqref{18/09/06/04:47} becomes an eigenfunction of the operator $\widetilde{L}_{\omega,-}^{\frac{1}{2}}\widetilde{L}_{\omega,+}\widetilde{L}_{\omega,-}^{\frac{1}{2}}$ associated with $\nu_{\omega}$ (see, e.g., the proof of Proposition 4.1 of \cite{AIKN3}). Hence,  what we need to prove is that: $\nu_{\omega} \in (-\infty, 0)$; and  
 the problem \eqref{18/09/06/04:47} has a minimizer. 

First, we shall show 
\begin{equation}\label{17/08/05/17:26}
\nu_{\omega} \in (-\infty, 0).
\end{equation}
Let $\{f_{n}\}$ be a minimizing sequence for $\nu_{\omega}$. Note that $\nu_{\omega}$ is bounded from above; for instance, 
\begin{equation}\label{18/09/12/06:58}
\nu_{\omega}
\le 
\frac{\langle 
\widetilde{L}_{\omega,-}^{\frac{1}{2}}\widetilde{L}_{\omega,+}\widetilde{L}_{\omega,-}^{\frac{1}{2}}f_{1}, f_{1}
\rangle}{\| f_{1}\|_{L^{2}}^{2} } \le C(\omega). 
\end{equation}
Moreover, we can verify  that  for any $n \ge 1$, 
\begin{equation}\label{15/04/23/22:20}
\begin{split}
&
\frac{\langle 
\widetilde{L}_{\omega,-}^{\frac{1}{2}}\widetilde{L}_{\omega,+}\widetilde{L}_{\omega,-}^{\frac{1}{2}}f_{n}, f_{n}
\rangle}{\| f_{n}\|_{L^{2}}^{2} }
=
\frac{\langle \widetilde{L}_{\omega, +} \widetilde{L}^{1/2}_{\omega, -}f_{n}, \widetilde{L}^{1/2}_{\omega, -}f_{n} \rangle}{\| f_{n}\|_{L^{2}}^{2} }
\\[6pt]
&=
\frac{\langle \widetilde{L}_{\omega, -} \widetilde{L}^{1/2}_{\omega, -}f_{n}, \widetilde{L}^{1/2}_{\omega, -}f_{n} \rangle}{\| f_{n}\|_{L^{2}}^{2} }
-
(p-1)
\frac{\langle \widetilde{\Phi}_{\omega}^{p-1} \widetilde{L}^{1/2}_{\omega, -}f_{n}, \widetilde{L}^{1/2}_{\omega, -}f_{n} \rangle}{\| f_{n}\|_{L^{2}}^{2} }
-
\frac{4}{d-2}
\frac{\langle \widetilde{\Phi}_{\omega}^{\frac{4}{d-2}} \widetilde{L}^{1/2}_{\omega, -}f_{n}, \widetilde{L}^{1/2}_{\omega, -}f_{n} \rangle}{\| f_{n}\|_{L^{2}}^{2} }
\\[6pt]
&\ge 
\frac{ \| \widetilde{L}_{\omega, -} f_{n} \|_{L^{2}}^{2}}{\| f_{n}\|_{L^{2}}^{2} }
-
(p-1)\| \widetilde{\Phi}_{\omega}\|_{L^{\infty}}^{p-1}
\frac{\langle  \widetilde{L}_{\omega,-} f_{n}, f_{n} \rangle}{\| f_{n}\|_{L^{2}}^{2} }
-
\frac{4}{d-2} \|\widetilde{\Phi}_{\omega}\|_{L^{\infty}}^{\frac{4}{d-2}}
\frac{\langle  \widetilde{L}_{\omega, -}f_{n}, f_{n} \rangle}{\| f_{n}\|_{L^{2}}^{2} }
\\[6pt]
&\ge 
\frac{1}{2}
\frac{ \| \widetilde{L}_{\omega, -} f_{n} \|_{L^{2}}^{2}}{\| f_{n}\|_{L^{2}}^{2} }
-
C(\omega)
. 
\end{split}
\end{equation}
Hence, \eqref{15/04/23/22:20} together with \eqref{18/09/12/06:58} shows that for any $n\ge 1$, 
\begin{equation}\label{18/09/11/14:35}
\frac{ \| \widetilde{L}_{\omega, -} f_{n} \|_{L^{2}}}{\| f_{n}\|_{L^{2}}}
\le 
C(\omega).
\end{equation}
A computation similar to \eqref{15/04/23/22:20} also shows that 
\begin{equation}\label{18/09/11/14:36}
\begin{split}
&
\frac{\langle 
\widetilde{L}_{\omega,-}^{\frac{1}{2}}\widetilde{L}_{\omega,+}\widetilde{L}_{\omega,-}^{\frac{1}{2}}f_{n}, f_{n}
\rangle}{\| f_{n}\|_{L^{2}}^{2} }
\\[6pt]
&\ge 
\frac{ \| \widetilde{L}_{\omega, -} f_{n} \|_{L^{2}}^{2}}{\| f_{n}\|_{L^{2}}^{2} }
-
(p-1)\| \widetilde{\Phi}_{\omega}\|_{L^{\infty}}^{p-1}
\frac{\langle  \widetilde{L}_{\omega,-} f_{n}, f_{n} \rangle}{\| f_{n}\|_{L^{2}}^{2} }
-
\frac{4}{d-2} \|\widetilde{\Phi}_{\omega}\|_{L^{\infty}}^{\frac{4}{d-2}}
\frac{\langle  \widetilde{L}_{\omega, -}f_{n}, f_{n} \rangle}{\| f_{n}\|_{L^{2}}^{2} }
\\[6pt]
&\ge -C(\omega)
\frac{ \| \widetilde{L}_{\omega, -} f_{n} \|_{L^{2}}}{\| f_{n}\|_{L^{2}}}
. 
\end{split}
\end{equation}
Thus, this together with \eqref{15/04/23/22:20} shows $\nu_{\omega}>-\infty$. In order to prove $\nu_{\omega}<0$, we use a function $Z \in H^{2}(\mathbb{R}^{d})$ with the following property: 
\begin{align}
\label{18/09/12/21:39}
\langle Z, W \rangle&=0,
\\[6pt]
\label{17/08/06/15:22}
-E :=\langle L_{+}Z, Z \rangle &<0.
\end{align}
The existence of such a function is proved by Duyckaerts and Merle (see (7.16) of \cite{DM}). Furthermore, fix a smooth even function $\chi$ on $\mathbb{R}$ such that $\chi(r)=1$ for $0\le r \le 1$ and $\chi(r)=0$ for $r\ge 2$, and define  \begin{equation}\label{18/09/12/21:47}
Z_{R}(x):=\chi\Big(\frac{|x|}{R}\Big)Z(x). 
\end{equation}
Let $\varepsilon>0$ be a small constant to be specified later, and let $R(\varepsilon)>0$ be a constant such that 
\begin{align}
\label{18/09/12/22:09}
\big| \langle Z_{R(\varepsilon)}, W \rangle \big| &\le \varepsilon, 
\\[6pt]
\label{18/09/12/21:49}
\langle L_{+}Z_{R(\varepsilon)}, Z_{R(\varepsilon)}\rangle &< -\frac{E}{2}. 
\end{align}
Then, we define    
\begin{equation}\label{17/08/06/16:00}
g_{\omega,\varepsilon}:= \kappa_{\omega,\varepsilon} \widetilde{\Phi}_{\omega}+Z_{R(\varepsilon)} ,
\end{equation}
where $\kappa_{\omega, \varepsilon}$ is chosen so that $\langle g_{\omega,\varepsilon},\widetilde{\Phi}_{\omega}\rangle=0$, namely 
\begin{equation}\label{17/08/06/16:50}
\kappa_{\omega,\varepsilon}
:= 
-\frac{\langle \widetilde{\Phi}_{\omega}, Z_{R(\varepsilon)} \rangle}{\|\widetilde{\Phi}_{\omega} \|_{L^{2}}^{2}}
.
\end{equation}
Notice from \eqref{17/08/14/12:09} that $(\widetilde{L}_{\omega,-})^{-1}g_{\omega,\varepsilon}$ is well-defined. Thus, it suffices for $\nu_{\omega}<0$ to show \begin{equation}\label{17/08/13/17:31}
\lim_{\omega \to \infty}
\langle \widetilde{L}_{\omega,+}g_{\omega,\varepsilon}, g_{\omega,\varepsilon} \rangle
\le -\frac{E}{100}.
\end{equation}
Let us prove \eqref{17/08/13/17:31}. Observe that 
\begin{equation}\label{17/08/13/14:26}
\begin{split}
&\langle \widetilde{L}_{\omega,+}g_{\omega, \varepsilon}, g_{\omega, \varepsilon} \rangle
\\[6pt]
&=
\kappa_{\omega, \varepsilon}^{2} 
\langle \widetilde{L}_{\omega,+}\widetilde{\Phi}_{\omega}, \widetilde{\Phi}_{\omega} \rangle+
2 \kappa_{\omega, \varepsilon}
\langle \widetilde{L}_{\omega,+}\widetilde{\Phi}_{\omega, \varepsilon}, Z_{R(\varepsilon)} \rangle
+
\langle \widetilde{L}_{\omega,+}Z_{R(\varepsilon)}, Z_{R(\varepsilon)}  \rangle
.
\end{split} 
\end{equation}
We consider the first term on the right-hand side of \eqref{17/08/13/14:26}. Note that 
\begin{equation}\label{17/08/13/15:40}
\widetilde{L}_{\omega,+}\widetilde{\Phi}_{\omega}
=
-
(p-1)\beta_{\omega} \widetilde{\Phi}_{\omega}^{p}
-
\frac{4}{d-2} \widetilde{\Phi}_{\omega}^{\frac{d+2}{d-2}}
.
\end{equation}
Hence, we see that  
\begin{equation}\label{17/08/13/15:43}
\kappa_{\omega, \varepsilon}^{2} 
\langle \widetilde{L}_{\omega,+}\widetilde{\Phi}_{\omega}, \widetilde{\Phi}_{\omega} 
\rangle
=
-\kappa_{\omega, \varepsilon}^{2} (p-1) \beta_{\omega} \|\widetilde{\Phi}_{\omega}\|_{L^{p+1}}^{p+1}
-\kappa_{\omega, \varepsilon}^{2} 
\frac{4}{d-2} \|\widetilde{\Phi}_{\omega}\|_{L^{2^{*}}}^{2^{*}}
< 0. 
\end{equation} 
We move on to the second and the third terms on the right-hand side of \eqref{17/08/13/14:26}. Notice from the \eqref{18/09/12/22:09}, Lemma \ref{18/09/05/01:01} and the compact embedding $\dot{H}^{1}(B_{R(\varepsilon)})\hookrightarrow  L^{2}(B_{R(\varepsilon)}) $ that for any sufficiently large $\omega$ depending on $\varepsilon$, 
\begin{equation}\label{18/09/12/22:13}
\big| \langle \widetilde{\Phi}_{\omega}, Z_{R(\varepsilon)} \rangle \big|
\le 
\big| \langle W, Z_{R(\varepsilon)} \rangle \big|
+
\big| \langle \widetilde{\Phi}_{\omega}-W, Z_{R(\varepsilon)} \rangle \big|
\le 2 \varepsilon.
\end{equation}
Furthermore, it follows from \eqref{18/09/12/22:13}, Lemma \ref{18/09/05/01:01} and the compact embedding $\dot{H}^{1}(B_{1})\hookrightarrow  L^{2}(B_{1})$ 
 that for any sufficiently large $\omega$ depending on $\varepsilon$, 
\begin{equation}\label{17/08/13/17:13}
|\kappa_{\omega, \varepsilon} |
\le 
\frac{2 \varepsilon }{ \|\widetilde{\Phi}_{\omega}\|_{L^{2}(B_{1})}^{2}} 
\le 
\frac{2\varepsilon}{\| W \|_{L^{2}(B_{1})}^{2}}
. 
\end{equation}
Then, we see from Lemmas \ref{proposition:2.3} 
\ref{18/09/05/01:05},  \eqref{17/08/13/17:13} and H\"{o}lder inequality that 
\begin{equation}\label{17/08/13/16:59}
\begin{split}
\lim_{\omega \to \infty}2 \kappa_{\omega}
\big| 
\langle \widetilde{L}_{\omega,+}\widetilde{\Phi}_{\omega}, Z_{R(\varepsilon)}  \rangle
\big| 
&\le 
2(p-1) 
\lim_{\omega \to \infty} \big| \kappa_{\omega,\varepsilon}  \beta_{\omega}  
\langle \widetilde{\Phi}_{\omega}^{p}, Z_{R(\varepsilon)}  \rangle
\big| 
\\[6pt]
&\quad +\frac{8}{d-2} \lim_{\omega \to \infty}  
\big| \kappa_{\omega,\varepsilon} 
\langle \widetilde{\Phi}_{\omega}^{\frac{d+2}{d-2}}, Z_{R(\varepsilon)}  \rangle\big|
\\[6pt]
&\lesssim \varepsilon  
\|W\|_{L^{2^{*}}}^{\frac{d+2}{d-2}} \| Z \|_{L^{2^{*}}}
\lesssim \varepsilon
.
\end{split} 
\end{equation}
We also see from \eqref{18/09/12/21:49} and Lemma \ref{proposition:2.3} that 
\begin{equation}\label{17/08/13/14:37}
\begin{split}
&\lim_{\omega \to \infty}
\langle \widetilde{L}_{\omega,+}Z_{R(\varepsilon)}, Z_{R(\varepsilon)} \rangle
\\[6pt]
&=
\langle L_{+}Z_{R(\varepsilon)}, Z_{R(\varepsilon)} \rangle
+
\lim_{\omega \to \infty} \alpha_{\omega}\| Z_{R(\varepsilon)}  \|_{L^{2}}^{2}
\\[6pt]
&\quad -
p \lim_{\omega \to \infty} \beta_{\omega} 
\langle \widetilde{\Phi}_{\omega}^{p-1}Z_{R(\varepsilon)} , Z_{R(\varepsilon)}  \rangle   
-
\frac{d+2}{d-2} \lim_{\omega \to \infty} 
\langle 
(\widetilde{\Phi}_{\omega}^{\frac{4}{d-2}}-W^{\frac{4}{d-2}}) 
Z_{R(\varepsilon)} , Z_{R(\varepsilon)}  \rangle   
\\[6pt]
&\le 
-\frac{E}{2}. 
\end{split} 
\end{equation}
Putting \eqref{17/08/13/14:26}, \eqref{17/08/13/15:43}, \eqref{17/08/13/16:59} and \eqref{17/08/13/14:37} together, we obtain 
\begin{equation}\label{18/09/12/22:47}
\lim_{\omega \to \infty}\langle \widetilde{L}_{\omega,+}g_{\omega, \varepsilon}, g_{\omega, \varepsilon} \rangle
\le 
C \varepsilon - \frac{E}{2}
.
\end{equation}
Thus, taking $\varepsilon \ll E$, we obtain the desired estimate \eqref{17/08/13/17:31}: hence \eqref{17/08/05/17:26} is true.

Finally, we shall prove the existence of minimizer for the problem \eqref{18/09/06/04:47}. Let $\{f_{n}\}$ be a minimizing sequence for $\nu_{\omega}$: hence, 
\begin{align}
\label{15/04/24/8:43}
\langle f_{n}, \widetilde{\Phi}_{\omega} \rangle &= 0
\quad \mbox{for all $n\ge 1$}, 
\\[6pt] 
\label{18/09/08/08:42}
\lim_{n \to \infty} 
\frac{\langle \widetilde{L}_{\omega, -}^{\frac{1}{2}} \widetilde{L}_{\omega, +} \widetilde{L}_{\omega, -}^{\frac{1}{2}}f_{n}, f_{n} \rangle}{
\| f_{n}\|_{L^{2}}^{2}} 
&= 
\nu_{\omega}
. 
\end{align}
Furthermore, we put 
\begin{equation}\label{18/09/08/09:43}
g_{n}:=\frac{f_{n}}{\| f_{n}\|_{L^{2}}}.
\end{equation}
Note that 
\begin{align}
\label{18/09/08/12:29}
\langle g_{n}, \widetilde{\Phi}_{\omega} \rangle
&\equiv 0 
, 
\\[6pt]
\label{18/09/08/13:18}
\| g_{n}\|_{L^{2}}
&\equiv 1
,  
\\[6pt]
\label{18/09/08/10:57}
\lim_{n \to \infty} 
\langle \widetilde{L}_{\omega, -}^{\frac{1}{2}} \widetilde{L}_{\omega, +} \widetilde{L}_{\omega, -}^{\frac{1}{2}}g_{n}, g_{n} \rangle
&=
\nu_{\omega}
.
\end{align}
Since $\| \widetilde{L}_{\omega,-}^{\frac{1}{2}} \widetilde{\Phi}_{\omega}\|_{L^{2}}^{2}
=
\langle \widetilde{L}_{\omega,-} \widetilde{\Phi}_{\omega}, \widetilde{\Phi}_{\omega} \rangle=0 $, we also have 
\begin{equation}\label{18/09/08/13:47}
\langle \widetilde{L}_{\omega,-}^{\frac{1}{2}}g_{n},  \widetilde{\Phi}_{\omega}
 \rangle=0.
\end{equation}
We see from \eqref{18/09/06/22:02}, \eqref{15/04/24/8:43}, the Cauchy-Schwartz estimate and \eqref{18/09/11/14:35} that for any $n\ge 1$,
\begin{equation}\label{18/09/08/13:52}
\|g_{n}\|_{H^{1}}^{2} 
=
\frac{\|f_{n} \|_{H^{1}}^{2}}{\| f_{n}\|_{L^{2}}^{2}} 
\le 
C(\omega).
\end{equation}
Then, we see from \eqref{18/09/06/22:02}, \eqref{18/09/08/13:47}, \eqref{18/09/08/10:57}, \eqref{18/09/08/13:18}, \eqref{eq:2.13} and $\nu_{\omega}<0$ that for any sufficiently large $n\ge 1$,
\begin{equation}\label{15/04/24/8:52}
\begin{split} 
&\|\widetilde{L}_{\omega,-}^{\frac{1}{2}}g_{n}\|_{H^{1}}^{2} 
\le 
C(\omega)
\langle 
\widetilde{L}_{\omega,-} \widetilde{L}_{\omega,-}^{\frac{1}{2}}g_{n}, \widetilde{L}_{\omega,-}^{\frac{1}{2}} g_{n} \rangle 
\\[6pt]
&=
C(\omega)
\langle \widetilde{L}_{\omega,+} \widetilde{L}_{\omega,-}^{\frac{1}{2}}g_{n}, \widetilde{L}_{\omega,-}^{\frac{1}{2}} g_{n} \rangle 
+
C(\omega)
(p-1)\beta_{n} 
\langle \widetilde{\Phi}_{\omega}^{p-1} \widetilde{L}_{\omega,-}^{\frac{1}{2}}g_{n}, \widetilde{L}_{\omega,-}^{\frac{1}{2}} g_{n} \rangle 
\\[6pt]
&\quad +
C(\omega)\frac{4}{d-2} 
\langle \widetilde{\Phi}_{\omega}^{\frac{4}{d-2}} \widetilde{L}_{\omega,-}^{\frac{1}{2}}g_{n}, \widetilde{L}_{\omega,-}^{\frac{1}{2}} g_{n} \rangle 
\\[6pt]
&\le \frac{1}{2}C(\omega)\nu_{\omega}
+ 
C(\omega) \beta_{\omega} \|\widetilde{\Phi}_{\omega}\|_{L^{\infty}}^{p-1}
\|\widetilde{L}_{\omega,-}^{\frac{1}{2}} g_{n} \|_{L^{2}}^{2} 
+
C(\omega) \|\widetilde{\Phi}_{\omega}\|_{L^{\infty}}^{\frac{4}{d-2}}
\|\widetilde{L}_{\omega,-}^{\frac{1}{2}} g_{n} \|_{L^{2}}^{2} 
\le C(\omega)
.
\end{split}
\end{equation}

The standard compactness theory together with \eqref{18/09/08/13:52} and \eqref{15/04/24/8:52} shows that there exist some subsequence of $\{g_{n}\}$ (still denoted by the same symbol) and functions $g_{\infty} \in H^{1}(\mathbb{R}^{d})$ and $h_{\infty} \in H^{1}(\mathbb{R}^{d})$ such that 
\begin{align}
\label{18/09/08/11:09}
\lim_{n \to \infty} g_{n}
&= g_{\infty}
\quad \mbox{weakly in $H^{1}(\mathbb{R}^{d})$},
\\[6pt]
\label{18/09/08/15:30}
\lim_{n \to \infty} (\widetilde{L}_{\omega,-})^{\frac{1}{2}}g_{n} 
&= 
h_{\infty}
\quad \mbox{weakly in $H^{1}(\mathbb{R}^{d})$}
. 
\end{align}
The weak convergence \eqref{18/09/08/15:30} together with \eqref{18/09/08/13:47} shows 
\begin{equation}\label{18/09/08/15:39}
\langle h_{\infty}, \widetilde{\Phi}_{\omega} \rangle =0. 
\end{equation}
Hence, $(\widetilde{L}_{\omega,-})^{-\frac{1}{2}}h_{\infty}=(\widetilde{L}_{\omega,-})^{\frac{1}{2}} (\widetilde{L}_{\omega,-})^{-1}h_{\infty}$ is well-defined. Furthermore, the uniqueness of weak limit implies  
\begin{equation}\label{18/09/08/15:55}
g_{\infty}=(\widetilde{L}_{\omega,-})^{-\frac{1}{2}}h_{\infty}
.
\end{equation}
We see from \eqref{18/09/08/15:55} and the lower semicontinuity of the weak limit that 
\begin{equation}\label{18/09/08/12:41}
\begin{split}
&
\langle  \widetilde{L}_{\omega, +} \widetilde{L}_{\omega, -}^{\frac{1}{2}}g_{\infty}, \widetilde{L}_{\omega, -}^{\frac{1}{2}} g_{\infty} \rangle 
=
\langle  \widetilde{L}_{\omega, +} h_{\infty}, h_{\infty} \rangle
\\[6pt]
&=
\|\nabla  h_{\infty} \|_{L^{2}}^{2}
+
\alpha_{\omega} \|h_{\infty} \|_{L^{2}}^{2}
-
p \beta_{\omega} \int_{\mathbb{R}^{d}} \widetilde{\Phi}_{\omega}^{p-1} h_{\infty}^{2}
-
\frac{d+2}{d-2}
\int_{\mathbb{R}^{d}} \widetilde{\Phi}_{\omega}^{p-1} h_{\infty}^{2}
\\[6pt]
&\le 
\liminf_{n\to \infty} 
\big\{\|\nabla  \widetilde{L}_{\omega, -}^{\frac{1}{2}}g_{n} \|_{L^{2}}^{2}
+
\alpha_{\omega} \|\widetilde{L}_{\omega, -}^{\frac{1}{2}}g_{n} \|_{L^{2}}^{2}
\big\}
\\[6pt]
&\quad -
\lim_{n\to \infty} 
\Big\{ p \beta_{\omega} \int_{\mathbb{R}^{d}} \widetilde{\Phi}_{\omega}^{p-1} 
 ( \widetilde{L}_{\omega, -}^{\frac{1}{2}}g_{n})^{2}
+
\frac{d+2}{d-2}
\int_{\mathbb{R}^{d}} \widetilde{\Phi}_{\omega}^{p-1}
( \widetilde{L}_{\omega, -}^{\frac{1}{2}}g_{n})^{2}
\Big\}
\\[6pt]
&= 
\liminf_{n\to \infty}\langle \widetilde{L}_{\omega, +} \widetilde{L}_{\omega, -}^{\frac{1}{2}}g_{n}, \widetilde{L}_{\omega, -}^{\frac{1}{2}} g_{n} \rangle 
\end{split} 
\end{equation}
Furthermore, this together with \eqref{18/09/08/13:18} and \eqref{18/09/08/10:57} shows 
\begin{equation}\label{15/04/24/9:13}
\langle  \widetilde{L}_{\omega, +} \widetilde{L}_{\omega, -}^{\frac{1}{2}}g_{\infty}, \widetilde{L}_{\omega, -}^{\frac{1}{2}} g_{\infty} \rangle 
\le 
\nu_{\omega}.
\end{equation}
Thus, we find that the existence of minimizer follows from  
\begin{equation} \label{15/04/24/9:18}
\| g_{\infty} \|_{L^{2}}^{2}=1
. 
\end{equation}
Let us prove this. We see from \eqref{18/09/08/15:55},  the lower semicontinuity of weak limit, \eqref{18/09/08/15:30} and \eqref{18/09/08/13:18} that 
\begin{equation}\label{15/04/24/9:17}
\|  g_{\infty} \|_{L^{2}}
\le 
\liminf_{n\to \infty} \|  g_{n} \|_{L^{2}}
=1
. 
\end{equation}
Suppose the contrary that 
\begin{equation}\label{18/09/08/23:10}
\| g_{\infty} \|_{L^{2}} <1 . 
\end{equation}
Note here that if follows from \eqref{15/04/24/9:13} and $\nu_{\omega}<0$ that 
 $g_{\infty}$ must be non-trivial. Put  $\lambda_{\infty} := \| g_{\infty} \|_{L^{2}}^{-1}$, so that, by the hypothesis \eqref{18/09/08/23:10}, $\lambda_{\infty}>1$ and 
\begin{equation}\label{18/09/13/21:10}
\| \lambda_{\infty} g_{\infty} \|_{L^{2}}
=1
. 
\end{equation}
Note here that the weak convergence \eqref{18/09/08/11:09} together with \eqref{18/09/08/12:29} implies  
\begin{equation}\label{18/09/08/23:19}
\langle g_{\infty}, \widetilde{\Phi}_{\omega} \rangle=0. 
\end{equation} 
Hence, it follows from the definition of $\nu_{\omega}$ (see \eqref{18/09/06/04:47}) and \eqref{18/09/13/21:10} that 
\begin{equation} \label{15/04/24/9:20}
\begin{split}
\nu_{\omega} 
&\le 
\langle (\widetilde{L}_{\omega,-})^{\frac{1}{2}} \widetilde{L}_{\omega, +} (\widetilde{L}_{\omega,-})^{\frac{1}{2}}(\lambda_{\infty}g_{\infty}), 
(\lambda_{\infty}g_{\infty}) \rangle
\\[6pt]
&= 
\lambda_{\infty}^{2}
\langle (\widetilde{L}_{\omega,-})^{\frac{1}{2}} \widetilde{L}_{\omega, +} 
(\widetilde{L}_{\omega,-})^{\frac{1}{2}} g_{\infty}, 
g_{\infty} \rangle.
\end{split} 
\end{equation}
On the other hand, it follows from \eqref{15/04/24/9:13}, $\nu_{\omega}<0$ and $\lambda_{\infty}>1$ that 
\begin{equation}\label{18/09/08/23:31}
\langle  \widetilde{L}_{\omega, +} \widetilde{L}_{\omega, -}^{\frac{1}{2}}g_{\infty}, \widetilde{L}_{\omega, -}^{\frac{1}{2}} g_{\infty} \rangle 
\le 
\nu_{\omega} < \lambda_{\infty}^{-2}\nu_{\omega},
\end{equation}
which contradicts \eqref{15/04/24/9:20}. Thus, we have derived \eqref{15/04/24/9:17}. 
\end{proof}

%%%%%%%%%%%%%%%%%%%%%%%%%%%%%%%%%%%%%%%%%%%%%%%%%%%%%%%%%

\section{Proof of Theorem \ref{18/09/09/17:09}}\label{18/09/09/17:30}

In this section, we prove Theorem \ref{18/09/09/17:09}, namely the nondegeneracy of $\Phi_{\omega}$ in $H_{\rm rad}^{1}(\mathbb{R}^{d})$ for all sufficiently large $\omega$. 
\par  
As well as  \cite{CG}, we consider the equation of the form 
\begin{equation}\label{msp}
- \Delta \Psi + \alpha \Psi 
- \varepsilon |\Psi|^{p-1} \Psi 
- |\Psi|^{\frac{4}{d-2}} \Psi = 0 
\quad \mbox{in $\mathbb{R}^{d}$},   
\end{equation}
where $d \geq 3$, $\alpha>0$, $\varepsilon >0$, and $1 < p < 5$. The action associated with \eqref{msp}, say $\mathcal{S}_{\alpha,\varepsilon}$, is given by  
\begin{equation}
\mathcal{S}_{\alpha, \varepsilon}(u) := \frac{1}{2} \|\nabla u\|_{L^{2}}^{2} 
+ \frac{\alpha}{2} \|u\|_{L^{2}}^{2} - \frac{\varepsilon}{p+1} \|u\|_{L^{p+1}}^{p+1} 
- \frac{1}{2^{*}} \|u\|_{L^{2^{*}}}^{2^{*}}
. 
\end{equation}

A unique existence result of ground state to \eqref{msp} was obtained by Coles and Gustafson (see  Theorem 1.2 and Theorem 1.7 of \cite{CG}): 
\begin{theorem}\label{CG-1}
Assume that $d=3$ and $3 < p < 5$. Then, there exists $\varepsilon_{0} >0$ with the following property: for each $0 < \varepsilon < \varepsilon_{0}$, there exist a constant $\alpha(\varepsilon) >0$ such that:
\begin{equation}\label{relation-parameter}
\alpha(\varepsilon) = C_{1} \varepsilon^{2} + O(\varepsilon^{2 + \frac{1}{2}}) 
\end{equation}
where 
\begin{equation}
C_{1} := \frac{\langle \Lambda W, W^{p}\rangle^{2}}{36 \pi^{2}},
\end{equation}
and the equation \eqref{msp} with $\alpha=\alpha(\varepsilon)$ has 
 a unique positive radial solution $\Psi_{\alpha(\varepsilon)} \in H^{1}(\mathbb{R}^{d})$. Furthermore, the solution $\Psi_{\alpha(\varepsilon)}$ satisfies 
\begin{align}
\label{18/09/10/08:07}
\|\Psi_{\alpha(\varepsilon)}-W\|_{\dot{H}^{1}} & \lesssim \varepsilon^{\frac{1}{2}}, 
\\[6pt]
\label{relation-parameter2}
\|\Psi_{\alpha(\varepsilon)}-W\|_{L^{r}} &\lesssim \varepsilon^{1- \frac{3}{r}} 
\quad \mbox{for all $3 < r \leq \infty$},  
\end{align}
where the implicit constant in \eqref{relation-parameter2} depends on $r$ as well as $d$ and $p$. 
\end{theorem}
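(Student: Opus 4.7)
\medskip

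\noindent\textbf{Proof proposal for Theorem \ref{CG-1}.}

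The plan is a Lyapunov--Schmidt bifurcation argument starting from the Talenti bubble $W$, which solves the pure critical equation $-\Delta W - W^{(d+2)/(d-2)}=0$ (the case $\alpha=\varepsilon=0$). Regarding $(\alpha,\varepsilon)$ as small parameters, I would look for a positive radial solution to \eqref{msp} of the form
\begin{equation*}
\Psi = W + \phi, \qquad \phi \perp \ker L_{+}|_{\mathrm{rad}},
\end{equation*}
in some suitable weighted space. In $d=3$, the Talenti function $W(x)=(1+|x|^{2}/3)^{-1/2}$ decays only like $|x|^{-1}$ and is not square-integrable; consequently the perturbation terms $\alpha\Psi$ and $\varepsilon|\Psi|^{p-1}\Psi$ must be treated in dual spaces with weights that capture this slow decay (for example $L^{r}$-based spaces with $r>3$, which matches the estimate \eqref{relation-parameter2}). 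The radial kernel of $L_{+}=-\Delta-\tfrac{d+2}{d-2}W^{4/(d-2)}$ is spanned by the scaling direction $\Lambda W$ from \eqref{eq:1.19}, and the restriction of $L_{+}$ to the orthogonal complement of $\Lambda W$ is an isomorphism between appropriate weighted spaces.

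Concretely, I would substitute $\Psi=W+\phi$ into \eqref{msp} and rewrite the equation as
\begin{equation*}
L_{+}\phi = -\alpha(W+\phi) + \varepsilon|W+\phi|^{p-1}(W+\phi) + N(\phi),
\end{equation*}
where $N(\phi)$ collects the nonlinear remainder from the critical term. A Lyapunov--Schmidt decomposition writes $\phi = t\Lambda W + \psi$ with $\psi\perp\Lambda W$, and splits the equation into an orthogonal part (solved via the inverse of $L_{+}$ on $(\Lambda W)^{\perp}$) and a one-dimensional bifurcation equation obtained by pairing against $\Lambda W$. Using an implicit function / contraction argument, the orthogonal part yields $\psi=\psi(\alpha,\varepsilon,t)$ of size controlled by $\varepsilon + \alpha^{1/2}$ in $\dot{H}^{1}$, delivering the estimate \eqref{18/09/10/08:07} and, via Sobolev / weighted estimates, the $L^{r}$ bounds \eqref{relation-parameter2}.

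The bifurcation equation is obtained by testing the full equation against $\Lambda W$. Using $L_{+}\Lambda W=0$, $\langle W, \Lambda W\rangle = \tfrac{d-2}{2}\|W\|_{L^{2}}^{2}$ (which in $d=3$ must be interpreted as an improper/divergent integral, regularized by careful integration by parts and the slow decay identity $\int_{|x|\le R}W\,\Lambda W = \tfrac{1}{6}\int_{|x|\le R}\nabla\cdot(xW^{2})\,dx$), one finds that the leading contribution of the $\alpha$-term on the right is, after the $d=3$ computation, precisely $6\pi\alpha^{1/2}$ (coming from the surface term at infinity), while the leading contribution of the $\varepsilon$-term is $\varepsilon\langle\Lambda W,W^{p}\rangle$. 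Setting these to balance at leading order gives the scalar equation
\begin{equation*}
6\pi \alpha^{1/2} \;=\; \varepsilon \langle \Lambda W, W^{p}\rangle + O(\varepsilon^{3/2}),
\end{equation*}
equivalently the asymptotic \eqref{relation-parameter} with $C_{1}=\langle\Lambda W,W^{p}\rangle^{2}/(36\pi^{2})$. Solving this scalar equation for $\alpha$ as a function of $\varepsilon$ by the implicit function theorem gives the unique curve $\alpha(\varepsilon)$, and tracking the contraction provides uniqueness of the positive radial solution.

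The main obstacle is the failure of $W\in L^{2}(\mathbb{R}^{3})$: the coefficient of $\alpha^{1/2}$ in the bifurcation equation (rather than $\alpha$, as it would be in $d\ge 5$) comes from a resolvent expansion at threshold and is delicate to derive rigorously. This requires choosing the function spaces carefully (weighted $L^{r}$ or Agmon-type spaces) so that $-\alpha W\in \mathrm{Range}(L_{+})$ in the appropriate sense and so that the nonlinear estimates on $|W+\phi|^{p-1}(W+\phi)$ remain contractive; this is exactly the kind of resolvent analysis the authors invoke later (cf.\ the reference to Lemma \ref{thm-resolvent-3} in the discussion of Theorem \ref{18/09/09/17:09}). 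Once these low-dimensional analytic ingredients are in place, the rest of the argument is the standard bifurcation machinery.
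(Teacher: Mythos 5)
The first thing to note is that the paper does not prove Theorem \ref{CG-1} at all: it is imported verbatim from Coles and Gustafson (Theorems 1.2 and 1.7 of \cite{CG}), so there is no in-paper proof to compare yours against. That said, your sketch is a faithful reconstruction of the strategy actually used in \cite{CG}: perturb off the Talenti bubble $W$, invert $L_{+}$ on the orthogonal complement of $\Lambda W$, and extract the curve $\alpha(\varepsilon)$ from a one-dimensional bifurcation equation whose $\alpha$-contribution scales like $\alpha^{1/2}$ rather than $\alpha$ because $W\notin L^{2}(\mathbb{R}^{3})$. Your heuristic for the coefficient is even numerically consistent: $W\Lambda W\sim -\tfrac{3}{2}|x|^{-2}$ at infinity, the effective cutoff at scale $\alpha^{-1/2}$ gives $\alpha\langle\Psi,\Lambda W\rangle\approx -6\pi\alpha^{1/2}$, and balancing against $\varepsilon\langle\Lambda W,W^{p}\rangle$ reproduces $C_{1}=\langle\Lambda W,W^{p}\rangle^{2}/(36\pi^{2})$ in \eqref{relation-parameter}.

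Two points in your proposal are genuine gaps if it is to stand as a proof rather than a plan. First, the coefficient $6\pi\alpha^{1/2}$ is obtained by a formal regularization of the divergent integral $\langle W,\Lambda W\rangle$; the rigorous mechanism is the low-energy (threshold) resolvent expansion of $(1-5R_{0}(-\lambda^{2})W^{4})^{-1}$, whose singular $\lambda^{-1}$ term (recorded in this paper as Lemma \ref{thm-resolvent-3}, quoted from Jensen--Kato) is what actually produces the $\alpha^{1/2}$ scaling together with its constant; you acknowledge this but do not carry it out, and the choice of weighted spaces in which $L_{+}$ is invertible modulo $\Lambda W$ and the nonlinearity is contractive is precisely where the work lies. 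Second, and more seriously, ``tracking the contraction provides uniqueness of the positive radial solution'' only yields uniqueness in a small neighbourhood of $W$ in the chosen norm; the theorem asserts uniqueness among \emph{all} positive radial $H^{1}$ solutions of \eqref{msp} at $\alpha=\alpha(\varepsilon)$, which requires an a priori argument that every such solution concentrates near $W$ after the rescaling (this is why \cite{CG} states uniqueness as a separate theorem, and why Remark \ref{18/09/11/17:49} notes that everything \emph{except} uniqueness holds for $2<p<5$). The existence, the asymptotics \eqref{relation-parameter}, and the bounds \eqref{18/09/10/08:07}--\eqref{relation-parameter2} would follow from your scheme once the resolvent analysis is supplied; the uniqueness would not.
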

\begin{remark}\label{18/09/11/17:49}
Except for the uniqueness, the claims are true for all $2<p<5$ (see Theorem 1.2  of \cite{CG}). 
\end{remark}

Now, let $\varepsilon \in (0,\varepsilon_{0})$, and let $\alpha(\varepsilon)$ and $\Psi_{\varepsilon}$ be respectively a constant given in Theorem \ref{CG-1} and the unique solution to \eqref{msp} with $\alpha=\alpha(\varepsilon)$. Furthermore, we set 
\begin{equation} \label{scale}
\omega(\varepsilon):= \alpha(\varepsilon) \varepsilon^{-\frac{4}{2^{*}-(p+1)}}, \qquad 
M_{\varepsilon} := \varepsilon^{-\frac{1}{2^{*}-(p+1)}}, \qquad 
\Phi_{\omega(\varepsilon)}:= M_{\varepsilon} \Psi_{\alpha(\varepsilon)} 
(M_{\varepsilon}^{\frac{2}{d-2}} \cdot).    
\end{equation} 
Observe that $\Phi_{\omega(\varepsilon)}$ becomes a solution to \eqref{eq:1.1} with $\omega=\omega(\varepsilon)$ and for any $3<p<5$, 
\begin{equation}\label{18/09/11/18:12}
\lim_{\varepsilon \to 0} \omega(\varepsilon) = \infty 
.
\end{equation} 
Thus, Theorem \ref{18/09/09/17:09} follows from the following theorem:  

\begin{theorem} \label{thm-1}
Assume that $d =3$ and $3 < p < 5$. Then, there exists $\varepsilon_{1} \in (0,  \varepsilon_{0})$ ($\varepsilon_{0}$ denotes the constant given in Theorem \ref{CG-1}) such that 
$\Psi_{\alpha(\varepsilon)}$ is nondegenerate 
for all $0 < \varepsilon < \varepsilon_{1}$. 
\end{theorem}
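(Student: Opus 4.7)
The plan is to follow the Lyapunov–Schmidt style argument of Theorem~1.1 of \cite{AIIKN}, with modifications dictated by the fact that in $d=3$ the Talenti function $W$, and its scaling generator $\Lambda W$, fail to be square-integrable. I argue by contradiction: suppose there exist $\varepsilon_n\downarrow 0$ and nontrivial radial $\phi_n\in H^1_{\mathrm{rad}}(\mathbb{R}^3)$ satisfying
\begin{equation*}
(-\Delta+\alpha_n)\phi_n = V_n\phi_n,\qquad V_n:= 5\Psi_n^{4}+p\varepsilon_n\Psi_n^{p-1},
\end{equation*}
where $\alpha_n=\alpha(\varepsilon_n)$ and $\Psi_n=\Psi_{\alpha(\varepsilon_n)}$. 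Normalise $\|\phi_n\|_{L^\infty}=1$ (legitimate since $V_n$ is uniformly bounded in $L^\infty\cap L^{3/2}$ by Theorem~\ref{CG-1} and Lemma~\ref{18/09/05/01:05}), so each $\phi_n$ admits the Green-function representation
\begin{equation*}
\phi_n(x)=\int_{\mathbb{R}^3}\frac{e^{-\sqrt{\alpha_n}|x-y|}}{4\pi|x-y|}\,V_n(y)\phi_n(y)\,dy.
\end{equation*}

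The first step is to extract a subsequence along which $\phi_n$ converges in $L^\infty_{\mathrm{loc}}$ to a radial limit $\phi_\infty$. The uniform pointwise decay of $\Psi_n$ together with $\Psi_n\to W$ in $\dot H^1\cap L^r$ for $r>3$ permits dominated convergence inside the integral, and the limit $\phi_\infty$ satisfies $L_+\phi_\infty=0$ distributionally, with the decay $|\phi_\infty(x)|\lesssim(1+|x|)^{-1}$ inherited from the representation formula. The classical radial nondegeneracy of the Talenti soliton then forces $\phi_\infty = c\,\Lambda W$ for some $c\in\mathbb{R}$.

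The crucial step is to show that $c=0$, which is where $d=3$ differs decisively from \cite{AIIKN}. Here the resolvent expansion Lemma~\ref{thm-resolvent-3} enters, in the schematic form
\begin{equation*}
(-\Delta+\alpha)^{-1}f=(-\Delta)^{-1}f-\frac{\sqrt{\alpha}}{4\pi}\int_{\mathbb{R}^3}f(y)\,dy+o(\sqrt{\alpha})
\end{equation*}
for suitably localised $f$, valid as $\alpha\downarrow 0$. Applied to $f=V_n\phi_n$, matching the leading and next-to-leading terms in the representation yields a scalar compatibility identity relating $c$, the mass $\int V_n\phi_n\,dy$, and the Pohozaev-type inner product $\langle \Lambda W,W^p\rangle$. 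Using the decomposition $\phi_n = c\,\Lambda\Psi_n+\widetilde\phi_n$ with $\widetilde\phi_n$ orthogonal to $\Lambda\Psi_n$ in a complementary subspace, together with the precise bifurcation asymptotic $\alpha(\varepsilon)=C_1\varepsilon^{2}+O(\varepsilon^{5/2})$ from Theorem~\ref{CG-1}, this identity reduces at the order $\sqrt{\alpha_n}=O(\varepsilon_n)$ to a relation of the form $c\cdot\langle\Lambda W,W^p\rangle\cdot(\text{nonzero constant})=0$. Since $\langle\Lambda W,W^p\rangle\neq 0$ — this is precisely the quantity defining $C_1$ — we conclude $c=0$, and a bootstrap on the integral representation upgrades $\phi_\infty\equiv 0$ to $\|\phi_n\|_{L^\infty}\to 0$, contradicting the normalisation.

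The main obstacle will be the derivation of the scalar identity in the previous paragraph: in $d\ge 5$ the analogous compatibility condition is obtained immediately by pairing the linearised equation against $\partial_\alpha\Psi_n\in L^2$, but in $d=3$ that pairing is divergent and must be replaced by a careful use of the threshold resolvent expansion, which is the technical heart of the argument.
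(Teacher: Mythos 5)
Your overall architecture (contradiction, normalisation, passage to the limit onto $\operatorname{Ker}L_{+}\big|_{\dot H^{1}_{\rm rad}}=\operatorname{span}\{\Lambda W\}$, and then a scalar identity that controls the coefficient of $\Lambda W$ via the threshold resolvent expansion) matches the paper's strategy, and you have correctly identified both the $d=3$ obstruction ($W,\Lambda W\notin L^{2}$) and the right tool (Lemma \ref{thm-resolvent-3}). However, the step you yourself flag as ``the main obstacle'' --- the derivation of the scalar compatibility identity --- is precisely the heart of the proof, and it is not supplied; as written there is a genuine gap. Moreover, the way you propose to close it is not quite right: the expansion you invoke, $R_{0}(-\alpha)=R_{0}(0)-\frac{\sqrt{\alpha}}{4\pi}\int(\cdot)+o(\sqrt{\alpha})$, is the expansion of the \emph{free} resolvent and is regular as $\alpha\downarrow 0$, whereas the singular object is $(1-5R_{0}(-\alpha)W^{4})^{-1}$, which by Lemma \ref{thm-resolvent-3} blows up like $\alpha^{-1/2}$ along the resonance direction $\langle W^{4}\Lambda W,\cdot\rangle\,\Lambda W$. ``Matching orders'' in the free-resolvent expansion of $\phi_n=R_{0}(-\alpha_n)[V_n\phi_n]$ does not by itself produce a solvability condition, because nothing prevents $\phi_n$ from carrying a compensating correction at order $\sqrt{\alpha_n}$; one must actually invert $1-5R_{0}(-\alpha_n)W^{4}$ and track the resonance.

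The paper obtains the identity by a different and essentially algebraic device: it pairs the linearised equation against the dilation field $w_n=x\cdot\nabla\Psi_n$ (not against $\partial_{\alpha}\Psi_n$), and combines the resulting Pohozaev-type relation with the pairing against $\Psi_n$ itself to get the \emph{exact} identity
$\sqrt{\alpha_n}\,\langle\Psi_n,v_n\rangle=\frac{5-p}{4}\,\frac{\varepsilon_n}{\sqrt{\alpha_n}}\,\langle\Psi_n^{p},v_n\rangle$
(equation \eqref{proof-eq-0}). The right-hand side converges, via $\alpha(\varepsilon)\sim C_{1}\varepsilon^{2}$, to $\frac{\kappa}{\sqrt{C_{1}}}\langle W^{p},\Lambda W\rangle\neq 0$ when $\kappa\neq 0$ and $3<p<5$; the resolvent expansion is then needed \emph{only} to prove that the left-hand side tends to zero, which is done by splitting $\Psi_n=k_nW^{4}\Lambda W+g_n$ with $g_n\perp W^{4}\Lambda W$ and applying Lemma \ref{resolvent-2} to the $g_n$-part (the orthogonality is exactly what neutralises the $\alpha^{-1/2}$ singularity). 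Note also that the paper runs the logic in the opposite direction from yours: the normalisation forces $\kappa\neq 0$ first, and the identity then yields the contradiction directly, so no bootstrap upgrading $L^{\infty}_{\rm loc}$ convergence to global $L^{\infty}$ smallness is needed. To complete your proof you would need either to reproduce this virial pairing or to carry out honestly the inversion of $1-5R_{0}(-\alpha_n)W^{4}$ and extract the solvability condition from its singular part; neither is done in the proposal.
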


In order to prove Theorem \ref{thm-1}, we need some preparation. Let us begin by recalling the explicit representation of the resolvent $R_{0}(\zeta):=(-\Delta -\zeta)^{-1}$ (see Section 6.23 of \cite{Lieb-Loss}): for any $\lambda >0$ and any function $u$ on $\mathbb{R}^{3}$, 
\begin{equation} \label{explicit-1}
R_{0}(-\lambda^{2}) u(x) 
= \int_{\mathbb{R}^{3}} 
\frac{e^{- \lambda |x-y|}}{4\pi |x - y|} u(y)\, dy.
\end{equation}
Using this formula \eqref{explicit-1} and the weak Young inequality (see Lieb and Loss~\cite[page 107]{Lieb-Loss} and Coles and Gustafson~\cite[Section 2.2]{CG}), we can obtain the following lemma: 
\begin{lemma}\label{resolvent-1}
Assume $d=3$. Then, for any $\lambda >0$ and any $1 \le s \le q \le \infty$ with  $3(1/s - 1/q) < 2$, 
\begin{equation}\label{resolvent-1-eq1}
\|R_{0}(-\lambda^{2})\|_{L^{s} \to L^{q}} 
\lesssim 
\lambda^{3(\frac{1}{s} - \frac{1}{q}) -2}
.
\end{equation} 
\end{lemma}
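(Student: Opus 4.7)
The strategy is to apply Young's convolution inequality directly to the explicit resolvent kernel \eqref{explicit-1}, once one computes how the $L^{r}$-norm of that kernel scales with $\lambda$.

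Denote the resolvent kernel by $K_{\lambda}(x) := e^{-\lambda|x|}/(4\pi|x|)$, so that $R_{0}(-\lambda^{2})u = K_{\lambda}*u$. First I would record the scaling identity $K_{\lambda}(x) = \lambda\, K_{1}(\lambda x)$, together with the fact that $K_{1}\in L^{r}(\mathbb{R}^{3})$ precisely when $1\le r<3$: near the origin $K_{1}(x)\sim |x|^{-1}$, which is locally $L^{r}$ only for $r<3$, while the exponential factor takes care of integrability at infinity for every $r\ge 1$. A direct change of variables then gives
\[
\|K_{\lambda}\|_{L^{r}} \;=\; \lambda^{1-3/r}\,\|K_{1}\|_{L^{r}}.
\]

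Given $1\le s\le q\le\infty$ with $3(1/s-1/q)<2$, I would define the Young exponent $r$ by $1/r = 1 + 1/q - 1/s$. The condition $s\le q$ gives $r\ge 1$, and the hypothesis $3(1/s-1/q)<2$ is exactly equivalent to $1/r>1/3$, i.e., $r<3$, so indeed $K_{1}\in L^{r}$. Applying Young's convolution inequality and inserting the scaling identity yields
\[
\|R_{0}(-\lambda^{2})u\|_{L^{q}} \;\le\; \|K_{\lambda}\|_{L^{r}}\,\|u\|_{L^{s}} \;\lesssim\; \lambda^{1-3/r}\,\|u\|_{L^{s}} \;=\; \lambda^{3(1/s-1/q)-2}\,\|u\|_{L^{s}},
\]
where in the last step I use $1-3/r = 3(1/s-1/q)-2$, completing the proof.

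There is essentially no analytic obstacle here: the argument reduces to bookkeeping of exponents. The only point requiring care is that the strict inequality $3(1/s-1/q)<2$ in the hypothesis matches exactly the condition $r<3$ needed to accommodate the $|x|^{-1}$ singularity of $K_{1}$ at the origin; if one instead wished to include the borderline case, one would have to replace standard Young's inequality by its weak-type refinement based on $|x|^{-1}\in L^{3,\infty}(\mathbb{R}^{3})$, but under the strict inequality stated in the lemma this is unnecessary.
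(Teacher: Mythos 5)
Your proof is correct and follows essentially the route the paper indicates (the paper gives no detailed proof, only pointing to the explicit kernel \eqref{explicit-1} and Young's inequality): compute $\|K_{\lambda}\|_{L^{r}}=\lambda^{1-3/r}\|K_{1}\|_{L^{r}}$ by scaling and convolve. The only cosmetic difference is that the paper cites the \emph{weak} Young inequality, which would also cover the endpoint $3(1/s-1/q)=2$ via $|x|^{-1}\in L^{3,\infty}(\mathbb{R}^{3})$, whereas you correctly observe that under the strict inequality in the hypothesis the standard Young inequality suffices.
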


Next, let us recall the following result by Jensen and Kato (see Lemma 2.2 and 
Lemma 4.3 of \cite{Jensen-Kato}): 
\begin{lemma}\label{thm-resolvent-3}
Assume $d=3$. Let $3/2 < s < 5/2$, and let $\mathcal{B}$ denote either  $B(H^{1}_{-s}(\mathbb{R}^{3}), H^{1}_{-s}(\mathbb{R}^{3}))$ or $B(L^{2}_{-s}(\mathbb{R}^{3}), L^{2}_{-s}(\mathbb{R}^{3}))$, where $H^{1}_{-s}(\mathbb{R}^{3})$ 
and $L^{2}_{-s}(\mathbb{R}^{3})$ are the weighted Sobolev spaces endowed 
with the following norms:
\begin{equation}
\|u\|_{H^{1}_{-s}} := \|(1 + |x|^{2})^{-\frac{s}{2}}u\|_{H^{1}}, 
\quad 
\|u\|_{L^{2}_{-s}} := \|(1 + |x|^{2})^{-\frac{s}{2}}u\|_{L^{2}}. 
\end{equation}
Then, 
we have the following expansion as $\lambda \to 0$: 
\begin{equation} \label{resolvent-expansion}
(1 - 5R_{0}(-\lambda^{2})W^{4})^{-1} 
= \frac{5}{3\pi} \lambda^{-1} \langle W^{4} \Lambda W, \,
\cdot \ \rangle \Lambda W + O(1) 
\quad 
\mbox{in $\mathcal{B}$}.
\end{equation}
\end{lemma}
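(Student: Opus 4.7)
The plan is to view $A(\lambda) := 1 - 5 R_0(-\lambda^2) W^4$ as a perturbation of $A_0 := 1 - 5 R_0(0) W^4$ and invert it by the Grushin (Schur-complement) method, exploiting the factorisation $(-\Delta + \lambda^2) A(\lambda) = L_+ + \lambda^2$. This identity shows that the singular behaviour of $A(\lambda)^{-1}$ at $\lambda = 0$ is driven by $\mathrm{Ker}\, L_+$, whose radial part in $d=3$ is $\mathrm{span}(\Lambda W)$; since $\Lambda W \in L^2_{-s}\setminus L^2$ is a zero-energy resonance (not a true eigenvalue), the blow-up is only $\lambda^{-1}$.

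The first step is the expansion of the free resolvent. Using the explicit kernel $R_0(-\lambda^2)(x,y) = (4\pi|x-y|)^{-1} e^{-\lambda|x-y|}$ together with $e^{-\lambda r} = 1 - \lambda r + O(\lambda^2 r^2)$, and noting that the estimate $|\int u|\lesssim \|u\|_{L^2_{-s}}$ for $s > 3/2$ makes the rank-one operator $\mathbf{1}\otimes\mathbf{1}: u\mapsto (\int u)\mathbf{1}$ bounded, I obtain
\begin{equation*}
R_0(-\lambda^2) = R_0(0) - \frac{\lambda}{4\pi}\,\mathbf{1}\otimes\mathbf{1} + O_{\mathcal{B}}(\lambda^2),
\end{equation*}
with the condition $s < 5/2$ needed to bound the quadratic remainder $\int |x-y|^2 u(y)\,dy$. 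Right-multiplying by $W^4$ yields $A(\lambda) = A_0 + \lambda A_1 + O(\lambda^2)$ with $A_1 u = \tfrac{5}{4\pi}\langle W^4, u\rangle \mathbf{1}$.

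Next I would set up the Fredholm theory: $R_0(0) W^4$ is compact on $L^2_{-s}$ because multiplication by $W^4$ maps $L^2_{-s}\to L^2_s$ (using $W^4 \lesssim (1+|x|)^{-4}$) and $R_0(0): L^2_s\to L^2_{-s}$ is bounded by the weighted Hardy/Agmon inequality for $s > 1/2$, so $A_0$ is Fredholm of index $0$. The kernel equation $A_0\phi = 0$ is equivalent, upon applying $-\Delta$, to $L_+ \phi = 0$, giving $\mathrm{Ker}\, A_0 = \mathrm{span}(\Lambda W)$ in the radial sector; analogously $\mathrm{Ker}\, A_0^* = \mathrm{span}(W^4\Lambda W)$ since $A_0^* = 1 - 5 W^4 R_0(0)$. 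With one-dimensional kernel and cokernel, the Grushin reduction goes through: writing $u = c\Lambda W + v$ with $v$ transverse to $\Lambda W$ and testing $A(\lambda)u = f$ against $W^4\Lambda W$, the scalar solvability condition reads
\begin{equation*}
\lambda c \,\langle A_1 \Lambda W, W^4 \Lambda W\rangle + O(\lambda^2) = \langle f, W^4\Lambda W\rangle,
\end{equation*}
so that $A(\lambda)^{-1} f \approx \tfrac{\Lambda W}{\lambda}\cdot \tfrac{\langle W^4\Lambda W, f\rangle}{\langle A_1\Lambda W, W^4\Lambda W\rangle} + O(1)$. For the coefficient: using $\Delta W + W^5 = 0$ and integration by parts, $\int W^4 \Lambda W = \tfrac{1}{2}\int W^5 - \tfrac{3}{5}\int W^5 = -\tfrac{1}{10}\int W^5$, and for $W(x) = (1+|x|^2/3)^{-1/2}$ a direct integration gives $\int W^5 = 4\sqrt 3\,\pi$; hence $\langle A_1\Lambda W, W^4\Lambda W\rangle = \tfrac{5}{4\pi}(\int W^4\Lambda W)^2 = \tfrac{3\pi}{5}$, whose reciprocal is exactly $\tfrac{5}{3\pi}$.

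The main obstacle I would expect is the presence of the non-radial kernel $\mathrm{span}(\partial_i W) \subset L^2(\mathbb{R}^3)$, which is a genuine zero eigenvalue (not just a resonance) of $L_+$ and would naively produce a $\lambda^{-2}$ singularity of $A(\lambda)^{-1}$. The saving cancellations are that $\int W^4 \partial_i W = \tfrac{1}{5}\int \partial_i(W^5) = 0$, so $A_1$ annihilates the translation modes, and by parity the associated rank-one operators vanish against the radial test functions that appear in the application to Theorem \ref{thm-1}; this confines the singular behaviour to the one-dimensional resonance subspace $\mathrm{span}(\Lambda W)$. Controlling the $O(1)$ remainder rigorously---keeping track of the $\lambda^2$ term in the resolvent expansion, verifying that it remains bounded in $\mathcal{B}$, and handling the Grushin complement cleanly---is the most technical, but essentially standard, part of the Jensen--Kato framework.
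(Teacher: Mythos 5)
The paper offers no proof of this lemma at all---it is simply quoted from Jensen--Kato (Lemmas 2.2 and 4.3 of \cite{Jensen-Kato})---so the relevant comparison is with that reference, and your proposal is essentially a correct reconstruction of its argument specialized to $V=-5W^{4}$: the kernel expansion $R_{0}(-\lambda^{2})=R_{0}(0)-\tfrac{\lambda}{4\pi}\,\mathbf{1}\otimes\mathbf{1}+\dots$ (with $s>3/2$ making the rank-one term bounded), the identity $(-\Delta+\lambda^{2})A(\lambda)=L_{+}+\lambda^{2}$ identifying $\ker A_{0}$ and $\ker A_{0}^{*}$ with $\mathrm{span}(\Lambda W)$ and $\mathrm{span}(W^{4}\Lambda W)$, and the Grushin reduction. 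Your coefficient computation is correct: $\int W^{4}\Lambda W=-\tfrac{1}{10}\int W^{5}=-\tfrac{2\sqrt{3}\pi}{5}$, so $\tfrac{5}{4\pi}\big(\int W^{4}\Lambda W\big)^{2}=\tfrac{3\pi}{5}$, whose reciprocal is the stated $5/(3\pi)$. The remaining technicalities (the precise weight conditions $3/2<s<5/2$ under which the remainder in the free-resolvent expansion is controlled) are exactly what the cited lemmas of \cite{Jensen-Kato} supply, and deferring to them there is no worse than what the paper itself does.

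One point in your argument needs correcting, however. The translation modes $\partial_{i}W\in L^{2}(\mathbb{R}^{3})$ are genuine zero-eigenfunctions of $-\Delta-5W^{4}$, and the fact that $A_{1}$ annihilates them (because $\int W^{4}\partial_{i}W=0$) is not a ``saving cancellation''---it is precisely the degeneracy of the Grushin matrix that, on the full space $L^{2}_{-s}$, produces a $\lambda^{-2}$ singularity of $A(\lambda)^{-1}$ (equivalently: $R(-\lambda^{2})=(1+R_{0}(-\lambda^{2})V)^{-1}R_{0}(-\lambda^{2})\sim\lambda^{-2}P_{0}$ whenever $0$ is an eigenvalue, which forces the same blow-up on $(1+R_{0}V)^{-1}$). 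So the expansion \eqref{resolvent-expansion} with only a $\lambda^{-1}$ singular part cannot hold on all of $\mathcal{B}$; it holds on the radial subspace, to which $A(\lambda)$ restricts (both $R_{0}(-\lambda^{2})$ and multiplication by $W^{4}$ preserve radiality), where $\ker A_{0}=\mathrm{span}(\Lambda W)$ is genuinely one-dimensional and your argument goes through verbatim. Since the lemma is only ever applied to radial functions (Lemma \ref{resolvent-2} and the proof of Theorem \ref{thm-1}), this restriction is left implicit by the paper, but your proof should impose it from the outset rather than invoke parity and the vanishing of $A_{1}$ on the translation modes at the end.
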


Using Lemma \ref{thm-resolvent-3}, Coles and Gustafson obtained the following estimate (see Lemma 2.4 of \cite{CG}): 
\begin{lemma}\label{resolvent-2}
For any $3<r \le \infty$ and any function 
$f \in L^{r}(\mathbb{R}^{3})$ satisfying $\langle W^{4} \Lambda W, f \rangle = 0$, we have 
\begin{equation} \label{eq-resolvent-1}
\|(1 - 5 R_{0}(- \lambda^{2})W^{4})^{-1}f\|_{L^{r}} \lesssim \|f\|_{L^{r}},
\end{equation}
where the implicit constant depends only on $r$. 
\end{lemma}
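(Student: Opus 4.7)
The plan is to bootstrap the weighted $L^{2}$ bound supplied by Lemma~\ref{thm-resolvent-3} to an unweighted $L^{r}$ bound, using the resolvent identity $u = f + 5R_{0}(-\lambda^{2})(W^{4}u)$ where $u := (1 - 5R_{0}(-\lambda^{2})W^{4})^{-1}f$. First, I fix $s \in (3/2, 5/2)$ (say $s = 2$) and note that H\"older's inequality gives $\|f\|_{L^{2}_{-s}} \lesssim \|f\|_{L^{r}}$ for $3 < r \le \infty$, since $(1+|x|^{2})^{-s/2} \in L^{2r/(r-2)}(\mathbb{R}^{3})$. Because the hypothesis $\langle W^{4}\Lambda W, f\rangle = 0$ kills the singular $\lambda^{-1}\langle W^{4}\Lambda W, \cdot \rangle\Lambda W$ term in the expansion \eqref{resolvent-expansion}, the $O(1)$ remainder then yields
\begin{equation*}
\|u\|_{L^{2}_{-s}} \lesssim \|f\|_{L^{2}_{-s}} \lesssim \|f\|_{L^{r}},
\end{equation*}
uniformly in $\lambda$ (in the small-$\lambda$ regime where Lemma~\ref{thm-resolvent-3} applies).

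Next, I upgrade this to an $L^{r}$ estimate on $R_{0}(-\lambda^{2})(W^{4}u)$. Since $e^{-\lambda|x-y|} \le 1$, the explicit kernel formula \eqref{explicit-1} yields the $\lambda$-uniform pointwise domination $|R_{0}(-\lambda^{2})(W^{4}u)(x)| \le (-\Delta)^{-1}(W^{4}|u|)(x)$. H\"older's inequality in turn gives $\|W^{4}u\|_{L^{q}} \lesssim \|W^{4}(1+|x|^{2})^{s/2}\|_{L^{2q/(2-q)}}\, \|u\|_{L^{2}_{-s}}$; since $W^{4}(1+|x|^{2})^{s/2} \sim |x|^{s-4}$ at infinity, the right-hand weight is finite whenever $(4-s)p > 3$ with $p = 2q/(2-q)$. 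With $s = 2$ this furnishes a whole range of admissible exponents $q$, crucially including values both strictly above and strictly below $3/2$.

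For $3 < r < \infty$, the Hardy--Littlewood--Sobolev inequality applied to $(-\Delta)^{-1}$ with $q = 3r/(3+2r) \in (1, 3/2)$ (which lies in the admissible range above) closes the estimate: $\|R_{0}(-\lambda^{2})(W^{4}u)\|_{L^{r}} \lesssim \|W^{4}u\|_{L^{q}} \lesssim \|u\|_{L^{2}_{-s}} \lesssim \|f\|_{L^{r}}$, and substituting back into the resolvent identity gives $\|u\|_{L^{r}} \lesssim \|f\|_{L^{r}}$. The main obstacle is the endpoint $r = \infty$, where HLS degenerates; I handle it by splitting $(-\Delta)^{-1}(W^{4}|u|)(x)$ into a near-field piece $\int_{|x-y|<1}$ and a far-field piece $\int_{|x-y| \ge 1}$, and applying H\"older with a different conjugate exponent to each: the near part pairs $|x-y|^{-1} \in L^{a'}_{\mathrm{loc}}$ (any $a' < 3$) against $\|W^{4}u\|_{L^{a}}$ for some $a > 3/2$, while the far part pairs $|x-y|^{-1}\mathbf{1}_{|x-y|\ge 1} \in L^{b'}$ (any $b' > 3$) against $\|W^{4}u\|_{L^{b}}$ for some $b < 3/2$, and both exponents $a,b$ lie in the admissible range furnished above. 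The crucial structural point is that the weighted-to-unweighted trade produces enough flexibility in $q$ to straddle the exponent $3/2$, which is exactly what is needed both for HLS (for finite $r$) and for the two-sided kernel split (for $r = \infty$).
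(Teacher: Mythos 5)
The paper does not actually prove this lemma; it is quoted verbatim from Coles--Gustafson (Lemma 2.4 of \cite{CG}), so there is no in-paper proof to compare against. Your argument is correct and is essentially the canonical route, which is also the one taken in the cited source: first get a $\lambda$-uniform bound in the weighted space $L^{2}_{-s}$ by observing that the orthogonality $\langle W^{4}\Lambda W, f\rangle=0$ annihilates the $\lambda^{-1}$-singular rank-one term in the Jensen--Kato expansion \eqref{resolvent-expansion}, then bootstrap through the identity $u=f+5R_{0}(-\lambda^{2})(W^{4}u)$ using the pointwise domination of the kernel by $(4\pi|x-y|)^{-1}$. The exponent bookkeeping checks out: with $s=2$ the weighted-to-unweighted H\"older trade makes $\|W^{4}u\|_{L^{q}}\lesssim\|u\|_{L^{2}_{-2}}$ available for all $q\in(6/7,2)$, which indeed straddles $3/2$; for $3<r<\infty$ the Hardy--Littlewood--Sobolev exponent $q=3r/(3+2r)\in(1,3/2)$ lies in that range, and your near/far splitting handles $r=\infty$ correctly. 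One caveat worth making explicit: the estimate can only hold uniformly for $\lambda$ in a neighbourhood of $0$ (the operator $1-5R_{0}(-\lambda^{2})W^{4}$ fails to be injective when $-\lambda^{2}$ hits the negative eigenvalue of $-\Delta-5W^{4}$), so the lemma as stated should be read with that restriction; this is harmless for the application, where $\lambda=\sqrt{\alpha_{n}}\to 0$, but your parenthetical ``in the small-$\lambda$ regime'' is doing real work and deserves to be promoted to a hypothesis.
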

We are now in a position to prove Theorem \ref{thm-1}.
\begin{proof}[Proof of Theorem \ref{thm-1}]
Suppose the contrary that the claim was false. Then, we could take a sequence $\{ \varepsilon_{n} \}$ in $(0,1)$ with the following properties: 
\begin{equation}\label{18/09/11/19:51}
\lim_{n\to \infty}\varepsilon_{n}=0, 
\end{equation}
and for each $n\ge 1$, there exists a nontrivial real-valued radial function $v_{n} \in H_{\rm rad}^{1}(\mathbb{R}^{3})$ such that  
\begin{align}
\label{18/09/12/07:19}
\|\nabla v_{n}\|_{L^{2}} &= 1,
\\[6pt]
\label{eq:4.30}
-\Delta v_{n} + \alpha_n v_{n} 
&= 
\big\{  
p \varepsilon_{n} \Psi_{n}^{p-1} + 5
\Psi_{n}^{4} 		
\big\} v_{n} \quad {\rm in} \ \mathbb{R}^{3},   
\end{align}
where $\alpha_{n}$ and $\Psi_{n}$ are abbreviations to $\alpha(\varepsilon_{n})$ and $\Psi_{\alpha(\varepsilon_{n})}$, respectively. Notice from \eqref{relation-parameter} and \eqref{18/09/11/19:51} that 
\begin{equation}\label{18/09/12/07:21}
\lim_{n\to \infty}\alpha_{n}=0. 
\end{equation}
Furthermore, since $\{v_n\}$ is bounded in $\dot{H}_{\rm rad}^1(\mathbb{R}^{3})$ (see \eqref{18/09/12/07:19}), passing to a subsequence, we may assume that there exists a radial function $v_{\infty} \in \dot{H}_{\rm rad}^{1}(\mathbb{R}^{3})$  such that 
\begin{equation}\label{18/09/11/20:31} 
\lim_{n\to \infty}v_{n}=v_{\infty} \quad \text{weakly in $\dot{H}^1(\mathbb{R}^{3})$}. 
\end{equation}
We see from \eqref{relation-parameter}, \eqref{18/09/11/19:51}, \eqref{eq:4.30} and \eqref{18/09/11/20:31} that 
\begin{equation}\label{18/09/11/20:35}
L_{+} v_{\infty} = 0,
\end{equation} 
where $L_{+}$ is the linearized operator around $W$ (see \eqref{eq:1.16}). Furthermore, it follows from ${\rm Ker}\, L_{+}|_{\dot{H}_{\rm rad}^{1}} ={\rm span} \{\Lambda W \}$ that there exists $\kappa \in \mathbb{R}$ such that 
\begin{equation}\label{eq:4.33}
v_{\infty} = \kappa \Lambda W. 
\end{equation}
We shall show that $\kappa \neq 0$. Multiply \eqref{eq:4.30} by $v_{n}$ and integrate the resulting equation to obtain  
\begin{equation} \label{18/08/29/12:04}
1 = \|\nabla v_{n}\|_{L^{2}}^{2} 
\le 
\|\nabla v_{n}\|_{L^{2}}^{2} + \alpha_{n} \|v_{n}\|_{L^{2}}^{2}
\le
 p \varepsilon_{n} \int_{\mathbb{R}^{3}}  \Psi_{n}^{p-1} |v_{n}|^{2} dx + 
5 \int_{\mathbb{R}^{3}}  \Psi_{n}^{4} |v_{n}|^{2} dx. 
\end{equation}
Here, suppose the contrary that $\kappa=0$. Then, it follows from \eqref{18/09/11/20:31} and Lemma \ref{18/09/05/01:05} (together with the relationship $\Psi_{n}=\widetilde{\Phi}_{\omega(\varepsilon_{n})}$) that  
\begin{equation}
\lim_{n \to \infty} \varepsilon_{n} \int_{\mathbb{R}^{3}} \Psi_{n}^{p-1} |v_{n}|^{2} dx =
\lim_{n \to \infty} \int_{\mathbb{R}^{3}} \Psi_{n}^{4} |v_{n}|^{2} dx = 0,  
\end{equation}
which contradicts \eqref{18/08/29/12:04}. 
Thus, we have shown that $\kappa \neq 0$. 

Next, we consider $w_n(x) := x \cdot \nabla \Psi_{n} (x)$. We can verify 
 that $w_{n}$ satisfies 
\begin{equation}\label{eq:4.34}
-\Delta w_{n} + \alpha_{n} w_{n} = 
\big\{  p \varepsilon_n \Psi_{n}^{p-1} 
+ 5 \Psi_{n}^{4} \big\} w_{n} 
+ 2 \big\{ - \alpha_{n} \Psi_{n} + \varepsilon_{n} \Psi_{n}^{p} 
+ \Psi_{n}^{5} \big\}.
\end{equation}
Furthermore, multiplying \eqref{eq:4.30} by $w_{n}$ and \eqref{eq:4.34} by $v_n$, and integrating the resulting equations, we find  that 
\begin{equation}\label{eq:4.35}
\int_{\mathbb{R}^{3}}\bigm\{ - \alpha_{n} \Psi_{n} + \varepsilon_{n} \Psi_{n}^{p} + \Psi_{n}^{5} \bigm\} v_{n} \, dx = 0.
\end{equation}
Recall here that $\Psi_{n}$ satisfies 
\begin{equation}\label{17/10/04/15:56}
-\Delta \Psi_{n} + \alpha_n \Psi_{n} = 
\varepsilon_n \Psi_{n}^p + \Psi_{n}^{5} \quad {\rm in} \ \mathbb{R}^{3}. 
\end{equation}
Multiplying \eqref{17/10/04/15:56} by $v_{n}$ and \eqref{eq:4.30} by $\Psi_{n}$,  we also find that  
\begin{equation}\label{18/09/11/21:05}
\begin{split}
\int_{\mathbb{R}^{3}} 
\bigm\{ \varepsilon_n \Psi_{n}^{p} 
+ 
\Psi_{n}^{5} \bigm\} v_{n} \,d dx 
&= 
\int_{\mathbb{R}^{3}} \nabla \Psi_{n} \cdot \nabla v_{n} 
+ \alpha_{n} \Psi_{n} v_{n} \,d dx 
\\[6pt]
&= \int_{\mathbb{R}^{3}} 
\bigm\{ 
p \varepsilon_{n} \Psi_{n}^{p-1} 
+ 5 \Psi_{n}^{4} 
\bigm\} v_{n} \Psi_{n} \, dx,
\end{split}
\end{equation}
which implies 
\begin{equation}\label{18/09/11/21:07}
\langle \Psi_{n}^{5},  v_{n} \rangle 
= - \frac{(p-1)}{4} \int_{\mathbb{R}^{3}} 
\varepsilon_{n} \Psi_{n}^{p} v_{n} \, dx. 
\end{equation}
Plugging this into \eqref{eq:4.35}, we obtain 
\begin{equation} \label{proof-eq-0}
\sqrt{\alpha_{n}}
\langle  \Psi_{n} , v_{n} \rangle  
= 
\frac{5 - p}{4} \frac{\varepsilon_{n}}{\sqrt{\alpha_{n}}} 
\int_{\mathbb{R}^{3}} \Psi_{n}^{p} v_{n} \, dx.
\end{equation}
We consider the right-hand side of \eqref{proof-eq-0}. We see from \eqref{relation-parameter}, \eqref{relation-parameter2}, \eqref{eq:4.33} and an elementary computation that 
\begin{equation} \label{proof-eq-1}
\lim_{n \to \infty} \frac{\varepsilon_{n}}{\sqrt{\alpha_{n}}} 
\langle \Psi_{n}^{p},  v_{n} \rangle   
= \frac{\kappa}{\sqrt{C_{1}}} \int_{\mathbb{R}^{3}} W^{p} \Lambda W dx 
= \frac{\kappa}{\sqrt{C_{1}}} 
\left(\frac{1}{2} - \frac{3}{p+1}\right)\|W\|_{L^{p+1}}^{p+1}. 
\end{equation}
Next, we consider the left-hand side of \eqref{proof-eq-0}.  We shall show that \begin{equation}\label{proof-eq-2}
\lim_{n \to \infty} \sqrt{\alpha_{n}}
\langle \Psi_{n}, v_{n} \rangle = 0. 
\end{equation}
Note that \eqref{proof-eq-0} together with \eqref{proof-eq-2} and \eqref{proof-eq-1} yields a contradiction. Thus, all we have to do is to prove \eqref{proof-eq-2}. 
\par 
Let us prove \eqref{proof-eq-2}. Rewrite the equation \eqref{eq:4.30} as follows: 
\begin{equation}\label{18/09/11/21:28}
(- \Delta + \alpha_{n} - 5 W^{4}) v_{n} 
= p \varepsilon_{n} \Psi_{n}^{p-1} v_{n} 
+ 5 (\Psi_{n}^{4} - W^{4}) v_{n}. 
\end{equation}
We further rewrite the equation \eqref{18/09/11/21:28} in the form 
\begin{equation} \label{proof-eq-9}
v_{n} = 
(1 - 5 R(-\alpha_{n}) W^{4})^{-1} R(-\alpha_{n})
\bigm[ p \varepsilon_{n} \Psi_{n}^{p-1} v_{n} 
+ 5 (\Psi_{n}^{4} - W^{4}) v_{n}\bigm]. 
\end{equation}
Next, we decompose $\Psi_{n}$ as follows: 
\begin{equation}\label{18/09/12/07:25}
\Psi_{n} = k_{n} W^{4} \Lambda W + g_{n}, 
\end{equation}
where 
\begin{equation}\label{18/09/12/07:26}
k_{n} := \frac{\langle \Psi_{n}, W^{4} \Lambda W \rangle}
{\|W^{4} \Lambda W\|_{L^{2}}^{2}}. 
\end{equation}
Note that 
\begin{equation}\label{18/09/12/07:27}
\langle g_{n}, W^{4}\Lambda W \rangle=0. 
\end{equation}
Moreover, it follows from \eqref{relation-parameter2} in Theorem \ref{CG-1} that\begin{equation} \label{proof-eq-5}
|k_{n}| \leq \frac{\|\Psi_{n}\|_{L^{\infty}} \|W^{4} \Lambda W\|_{L^{1}}}
{\|W^{4} \Lambda W\|_{L^{2}}^{2}} \lesssim 1. 
\end{equation}

We consider the first term in the decomposition \eqref{18/09/12/07:25}. Using H\"{o}lder's inequality, \eqref{proof-eq-5}, \eqref{18/09/12/07:19} and \eqref{18/09/12/07:21}, we see that  
\begin{equation} \label{proof-eq-7}
\lim_{n\to \infty}
\sqrt{\alpha_{n}} 
|
\langle k_{n} W^{4} \Lambda W,  v_{n} \rangle 
|
\le 
\lim_{n\to \infty} 
\sqrt{\alpha_{n}} |k_{n}| \|W^{4} \Lambda W\|_{L^{\frac{6}{5}}} 
\|v_{n}\|_{L^{6}}=0
. 
\end{equation}
Next, we consider the second term in \eqref{18/09/12/07:25}. We see from \eqref{proof-eq-9} and Lemma \ref{resolvent-2} that  for any $r>3$, 
\begin{equation} \label{proof-eq-10}
\begin{split}
&\sqrt{\alpha_{n}} | \langle g_{n},  v_{n}\rangle  |
\\[6pt]
& = \sqrt{\alpha_{n}}
| 
\langle g_{n}, \, 
(1 - 5 R(-\alpha_{n}) W^{4})^{-1} 
R(-\alpha_{n})\big[ 
p \varepsilon_{n} \Psi_{n}^{p-1} v_{n} 
+ 5 (\Psi_{n}^{4} - W^{4}) v_{n} 
\big]
\rangle 
|
\\[6pt]
& = \sqrt{\alpha_{n}}
|
\langle (1 - 5 R(-\alpha_{n}) W^{4})^{-1} g_{n}, \, 
 R(-\alpha_{n})\big[ 
p \varepsilon_{n} \Psi_{n}^{p-1} v_{n} 
+ 5 (\Psi_{n}^{4} - W^{4}) v_{n}
\big]
\rangle 
|
\\[6pt]
& \lesssim 
\sqrt{\alpha_{n}} 
\|(1 - 5 R(-\alpha_{n}) W^{4})^{-1} g_{n}\|_{L^{r}}
\|R(-\alpha_{n})\big[ 
(\varepsilon_{n} \Psi_{n}^{p-1} v_{n} + (\Psi_{n}^{4} - W^{4}) v_{n}\big] 
\|_{L^{q}} 
\\[6pt]
& \lesssim 
\sqrt{\alpha_{n}} \|g_{n}\|_{L^{r}}
\Bigm\{ 
\varepsilon_{n}\|R(-\alpha_{n}) \big[ \Psi_{n}^{p-1} v_{n}\big] \|_{L^{q}} 
+ \|R(-\alpha_{n}) \big[ (\Psi_{n}^{4} - W^{4}) v_{n}) \big] \|_{L^{q}}
\Bigm\},  
\end{split}
\end{equation}
where $q$ is the H\"older conjugate of $r$, namely $1/q= 1-1/r$. Fix a number  
 $s>0$ such that $6/(2p-1) < s < 3/2$, which is possible since $p>3$. Furthermore, choose $r>0$ so that $s < q=\frac{r}{r-1} < 3/2$. Then, it follows from Lemma \ref{resolvent-1} 
and \eqref{relation-parameter2} that 
\begin{equation} \label{proof-eq-4}
\begin{split}
\sqrt{\alpha_{n}}\varepsilon_{n} 
\|R(-\alpha_{n}) \big[ \Psi_{n}^{p-1} v_{n}\big] \|_{L^{q}} 
&\lesssim 
\alpha_{n}^{\frac{3}{2}(\frac{1}{s} - \frac{1}{q})} 
\|\Psi_{n}^{p-1} v_{n} \|_{L^{s}} 
\\[6pt] 
&\lesssim
\alpha_{n}^{\frac{3}{2}(\frac{1}{s}- \frac{1}{q})} 
\|\Psi_{n}\|_{L^{\frac{6s(p-1)}{6-s}}}^{p-1}\|v_{n}\|_{L^{6}}.  
\end{split}
\end{equation}
Since $6s(p-1)/(6-s) > 3$, this estimate \eqref{proof-eq-4} together with \eqref{relation-parameter2} in Theorem \ref{CG-1} and \eqref{18/09/12/07:19} yields that 
\begin{equation} \label{proof-eq-17}
\sqrt{\alpha_{n}}\varepsilon_{n} 
\|R(-\alpha_{n}) \Psi_{n}^{p-1} v_{n} \|_{L^{q}} 
\lesssim \alpha_{n}^{\frac{3}{2}(\frac{1}{s}- \frac{1}{q})}.  
\end{equation}
On the other hand, it follows from Lemma \ref{resolvent-1} and \eqref{relation-parameter2} that   
\begin{equation} \label{proof-eq-6}
\begin{split}
&\sqrt{\alpha_{n}} \|R(-\alpha_{n}) \big[ (\Psi_{n}^{4} - W^{4}) v_{n}\big] \|_{L^{q}}
\\[6pt]
&\lesssim \alpha_{n}^{\frac{3}{2}(1- \frac{1}{q}) - \frac{1}{2}} 
\|(\Psi_{n}^{4} - W^{4})v_{n}\|_{L^{1}} 
\\[6pt]
& \lesssim 
\alpha_{n}^{1 - \frac{3}{2q}} 
\left(\|W^{3} (\Psi_{n}-W) v_{n}\|_{L^{1}} + 
\|(\Psi_{n}-W) ^{4} v_{n}\|_{L^{1}} \right) 
\\[6pt]
& \lesssim 
\alpha_{n}^{1 - \frac{3}{2q}} 
(\|\Psi_{n}-W\|_{L^{\infty}} 
\|W\|_{L^{\frac{18}{5}}}^{3} \|v_{n}\|_{L^{6}} + 
\|\Psi_{n}-W \|_{L^{\frac{24}{5}}}^{4} \|v_{n}\|_{L^{6}}) 
\\[6pt]
& \lesssim 
\alpha_{n}^{\frac{3}{2} -\frac{3}{2q}} 
\|v_{n}\|_{L^{6}} 
\lesssim 
\alpha_{n}^{\frac{3}{2}(1 -\frac{1}{q})}. 
\end{split}
\end{equation}
Putting \eqref{proof-eq-10}, \eqref{proof-eq-17} and \eqref{proof-eq-6} together,  we obtain  
\begin{equation} \label{proof-eq-8}
\lim_{n\to \infty} 
\sqrt{\alpha_{n}} | \langle g_{n}, v_{n} \rangle |
\lesssim 
\lim_{n \to \infty}
\Big\{ 
\alpha_{n}^{\frac{3}{2}(\frac{1}{s}- \frac{1}{q})} 
+
\alpha_{n}^{\frac{3}{2}(1 - \frac{1}{q})}
\Big\}
= 
0 
. 
\end{equation}
Furthermore, we find from \eqref{proof-eq-7} and 
\eqref{proof-eq-8} that 
\begin{equation}\lim_{n \to \infty} 
\sqrt{\alpha_{n}} \langle \Psi_{n}, v_{n} \rangle
= \lim_{n \to \infty} 
\sqrt{\alpha_{n}} \langle k_{n} W^{4} \Lambda W, v_{n} \rangle
+ \lim_{n \to \infty} \sqrt{\alpha_{n}} \langle g_{n}, v_{n} \rangle 
= 0.
\end{equation} 
Thus, we have proved \eqref{proof-eq-2} and completed the proof of Theorem \ref{18/09/09/17:09}. 
\end{proof}

%%%%%%%%%%%%%%%%%%%%%%%%%%%%%%%%%%%%%%%%%%%%%%%%%%%%%%%%%%%%%%%%%%%%%%%%%%%%%%%%
\section{Proof of Theorem \ref{17/08/15/11:05}}\label{17/08/15/11:02}

In this section, we give a proof of Theorem \ref{17/08/15/11:05}. We assume $\omega > \max\{\omega_{2}, \omega_{3}\}$. Hence, by Theorem \ref{17/08/03/11:47} and Theorem \ref{18/09/09/17:09}, $-i\mathscr{L}_{\omega}$ has a positive eigenvalue $\mu>0$ as an operator in $L_{\rm real}^{2}(\mathbb{R}^{d})$, and $\Phi_{\omega}$ is nondegenerate in $H_{\rm rad}^{1}(\mathbb{R}^{d})$.

Let $\mathscr{U}_{+}$ be an eigenfunction associated with the positive eigenvalue $\mu$, and put   
\begin{equation}\label{13/04/29/11:30}
\mathscr{U}_{-}:=\overline{\mathscr{U}_{+}}.
\end{equation} 
Then, it is easy to verify that  
\begin{equation}\label{15/02/03/13:17}
-i\mathcal{L}_{\omega}\mathscr{U}_{-}
=
-\mu \mathscr{U}_{-}. 
\end{equation} 
Hence, $\mathscr{U}_{-}$ is an eigenfunction of $-i\mathscr{L}_{\omega}$ associated with $-\mu$.
\par 
Note that Weyl's essential spectrum theorem together with \eqref{17/08/13/11:05} shows that for any $\omega>0$, 
\begin{equation}\label{17/08/15/13:57} 
\sigma_{ess}(L_{\omega,+})=[\omega, \infty).
\end{equation}
Moreover, since $\Phi_{\omega}$ is a positive solution to \eqref{eq:1.1}, we see that 
\begin{equation}\label{17/08/15/13:47}
\langle L_{\omega,+}\Phi_{\omega},\Phi_{\omega} \rangle 
=
-(p-1) \| \Phi_{\omega}\|_{L^{p+1}}^{p+1}
-\frac{4}{d-2}\| \Phi_{\omega}\|_{L^{2^{*}}}^{2^{*}}
<0
.
\end{equation}

We can verify that  for any $f \in H^{1}(\mathbb{R}^{d})$,
\begin{equation}\label{13/02/19/10:42}
\big| 
\langle 
L_{\omega,+} f, f
\rangle_{H^{-1},H^{1}}
\big|
+
\big| 
\langle   
L_{\omega,-} f, f
\rangle_{H^{-1},H^{1}}
\big| 
\lesssim  
\big( \omega +
\|\Phi_{\omega}\|_{L^{p+1}}^{p-1}
+
\|\Phi_{\omega}\|_{L^{2^{*}}}^{\frac{4}{d-2}} 
\big)
\|f\|_{H^{1}}^{2}
.
\end{equation}

We can derive the information on the negative eigenvalue of $L_{\omega,+}$ 
 in a way similar to Lemma 2.3 in \cite{Nakanishi-Schlag1}:
\begin{lemma}\label{13/01/01/16:22}
The linear operator $L_{\omega,+}$ has only one negative eigenvalue which is simple and $0$ is not an eigenvalue as an operator in $L_{\rm rad}^{2}(\mathbb{R}^{d})$.  
\end{lemma}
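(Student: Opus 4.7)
The plan is to establish the three assertions of the lemma in turn, beginning with the easiest. First, since $\omega > \omega_{3}$, the nondegeneracy result (Theorem \ref{18/09/09/17:09} for $d=3$, $3<p<5$, and Remark \ref{18/09/10/19:31} for $d\ge 5$) gives ${\rm Ker}\,L_{\omega,+}\big|_{H^{1}_{\rm rad}}=\{0\}$, so $0$ is not an eigenvalue on $L^{2}_{\rm rad}(\mathbb{R}^{d})$. By \eqref{17/08/15/13:57}, every spectral point in $(-\infty, \omega)$ is an isolated eigenvalue of finite multiplicity, and by \eqref{13/02/19/10:42} the quadratic form is bounded below.

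Next, \eqref{17/08/15/13:47} gives $\inf\sigma(L_{\omega,+})<0$, so the bottom of the spectrum $\lambda_{0}$ is a genuine negative eigenvalue. By the standard Perron--Frobenius argument for Schr\"odinger operators with decaying potentials (applicable in view of the decay \eqref{17/08/13/11:05}), $\lambda_{0}$ is simple in $L^{2}(\mathbb{R}^{d})$ and the associated eigenfunction can be taken strictly positive; since the potential of $L_{\omega,+}$ is radial, this positive eigenfunction is itself radial, so $\lambda_{0}$ is a simple eigenvalue of $L_{\omega,+}$ on $L^{2}_{\rm rad}(\mathbb{R}^{d})$ as well.

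The key step is to rule out a second negative eigenvalue in the radial class. Consider the Nehari-type functional
\begin{equation*}
K_{\omega}(u):=\frac{d}{dt}\mathcal{S}_{\omega}(tu)\Big|_{t=1}
=\omega\|u\|_{L^{2}}^{2}+\|\nabla u\|_{L^{2}}^{2}-\|u\|_{L^{p+1}}^{p+1}-\|u\|_{L^{2^{*}}}^{2^{*}},
\end{equation*}
and the manifold $\mathcal{N}_{\rm rad}:=\{u\in H^{1}_{\rm rad}\setminus\{0\}:K_{\omega}(u)=0\}$. By the standard Nehari characterization of the ground state for combined power equations (see, e.g., \cite{AIKN3, AIIKN}), $\Phi_{\omega}$ minimizes $\mathcal{S}_{\omega}$ over $\mathcal{N}_{\rm rad}$. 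Using the equation \eqref{eq:1.1} for $\Phi_{\omega}$, a direct computation yields $\nabla K_{\omega}(\Phi_{\omega})=2 L_{\omega,+}\Phi_{\omega}$; since $\mathcal{S}_{\omega}'(\Phi_{\omega})=0$, the Lagrange multiplier at the constrained minimum vanishes, so the second-order necessary condition reads
\begin{equation*}
\langle L_{\omega,+}w,w\rangle \ge 0\quad\text{for all } w\in H^{1}_{\rm rad}\text{ with }\langle L_{\omega,+}\Phi_{\omega},w\rangle=0.
\end{equation*}
Decomposing an arbitrary $v\in H^{1}_{\rm rad}$ as $v=a\Phi_{\omega}+w$ with $a:=\langle L_{\omega,+}\Phi_{\omega},v\rangle/\langle L_{\omega,+}\Phi_{\omega},\Phi_{\omega}\rangle$ (well defined by \eqref{17/08/15/13:47}) and $w$ tangent, self-adjointness of $L_{\omega,+}$ gives
\begin{equation*}
\langle L_{\omega,+}v,v\rangle = a^{2}\langle L_{\omega,+}\Phi_{\omega},\Phi_{\omega}\rangle+\langle L_{\omega,+}w,w\rangle,
\end{equation*}
so the Morse index of $L_{\omega,+}$ on $L^{2}_{\rm rad}$ is at most $1$. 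Combined with the existence of $\lambda_{0}<0$ from the previous paragraph, this forces the Morse index to be exactly $1$, which together with the simplicity of $\lambda_{0}$ yields the claim.

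The main obstacle is the variational input that $\Phi_{\omega}$ is actually the Nehari minimizer in the radial class (rather than merely a critical point on $\mathcal{N}_{\rm rad}$); this is a delicate point in the presence of the critical exponent, but it is established for the equation \eqref{eq:1.1} in \cite{AIKN3, AIIKN} and can be invoked as a black box.
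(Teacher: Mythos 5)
Your argument is correct and is essentially the route the paper intends: the paper gives no proof of this lemma, deferring to Lemma 2.3 of \cite{Nakanishi-Schlag1}, whose argument is precisely your combination of the Nehari-manifold second-variation bound (Morse index at most $1$ on the radial class), the sign condition $\langle L_{\omega,+}\Phi_{\omega},\Phi_{\omega}\rangle<0$ from \eqref{17/08/15/13:47}, and the nondegeneracy results for the kernel statement. The only slip is cosmetic: a direct computation gives $\nabla K_{\omega}(\Phi_{\omega})=L_{\omega,+}\Phi_{\omega}$ rather than $2L_{\omega,+}\Phi_{\omega}$, which does not affect the tangent-space decomposition or the conclusion.
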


Under some condition of orthogonality, the operators $L_{\omega,+}$ and $L_{\omega,-}$ give us norms equivalent to the one of $H^{1}(\mathbb{R}^{d})$: 
\begin{lemma}\label{13/01/02/12:08}
There exists $\omega_{5}>0$ such that if $\omega >\omega_{5}$, $\mu$ is a positive eigenvalue of $i\mathscr{L}_{\omega}$ and $\mathscr{U}_{+}$ is an eigenfunction associated with $\mu$, then we have the following:
\\
{\rm (i)} For any real-valued radial function $g \in H^{1}(\mathbb{R}^{d})$ with $\langle g,\Im[\mathscr{U}_{+}]\rangle=0$,  
\begin{equation}
\label{13/01/02/12:11}
\langle L_{\omega,+}g,g \rangle_{H^{-1},H^{1}} \sim \|g\|_{H^{1}}^{2},
\end{equation}
where the implicit constant may depend on $\omega$.
\\[6pt]
{\rm (ii)} For any real-valued radial function $g \in H^{1}(\mathbb{R}^{d})$ with $\langle g,\partial_{\omega}\Phi_{\omega} \rangle=0$,  
\begin{equation}\label{13/01/02/12:12}
\langle L_{\omega,-}g,g \rangle_{H^{-1},H^{1}} \sim \|g\|_{H^{1}}^{2},
\end{equation}
where the implicit constant may depend on $\omega$. 
\end{lemma}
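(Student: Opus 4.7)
The upper bounds $\langle L_{\omega,\pm} g,g\rangle\lesssim\|g\|_{H^1}^2$ in both (i) and (ii) are immediate from \eqref{13/02/19/10:42}, so the substantive content is the coercive lower bounds.

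For (ii), invoke Lemma \ref{13/03/18/15:50}: $L_{\omega,-}\ge 0$ with kernel $\mathrm{span}\{\Phi_\omega\}$ and $\langle L_{\omega,-}h,h\rangle\gtrsim\|h\|_{H^1}^2$ for radial $h\perp\Phi_\omega$. Given radial $g$ with $\langle g,\partial_\omega\Phi_\omega\rangle=0$, decompose $g=\alpha\Phi_\omega+g^\perp$ with $g^\perp\perp\Phi_\omega$. The constraint becomes $\alpha\langle\Phi_\omega,\partial_\omega\Phi_\omega\rangle=-\langle g^\perp,\partial_\omega\Phi_\omega\rangle$; provided $\langle\Phi_\omega,\partial_\omega\Phi_\omega\rangle\neq 0$ for large $\omega$, Cauchy--Schwarz gives $|\alpha|\lesssim\|g^\perp\|_{L^2}$, hence $\|g\|_{H^1}\lesssim\|g^\perp\|_{H^1}$, and the coercivity transfers. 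The required non-vanishing of $\langle\Phi_\omega,\partial_\omega\Phi_\omega\rangle$ I would establish as a side step, using the identity $L_{\omega,+}\partial_\omega\Phi_\omega=-\Phi_\omega$ (obtained by differentiating \eqref{eq:1.1} in $\omega$) together with the spectral picture of $L_{\omega,+}$ and a size estimate on $\partial_\omega\Phi_\omega$ in the Talenti-rescaled frame.

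For (i), work with the specific realization from Section \ref{17/08/03/10:55}: if $f$ is the minimizer in \eqref{18/09/06/04:47}, set $u_1:=L_{\omega,-}^{1/2}f$ and $u_2:=-\mu^{-1}L_{\omega,+}L_{\omega,-}^{1/2}f$, so that $\mathscr{U}_+=u_1+iu_2$ and the eigenvalue equation unpacks to $L_{\omega,+}u_1=-\mu u_2$, $L_{\omega,-}u_2=\mu u_1$. By Lemma \ref{13/01/01/16:22}, Theorem \ref{18/09/09/17:09}, and Weyl's theorem, on the radial sector $L_{\omega,+}$ has exactly one simple negative eigenvalue $-\lambda_1$ with unit eigenfunction $\phi_1$, zero is not an eigenvalue, and $\sigma_{\rm ess}=[\omega,\infty)$; in particular $L_{\omega,+}^{-1}$ is well-defined and $\langle L_{\omega,+}h,h\rangle\gtrsim\|h\|_{H^1}^2$ for $h\in H^1_{\rm rad}\cap\phi_1^\perp$. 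From $L_{\omega,+}u_1=-\mu u_2$ one gets $L_{\omega,+}^{-1}u_2=-u_1/\mu$, whence
\begin{equation*}
\langle L_{\omega,+}^{-1}u_2,u_2\rangle=-\mu^{-2}\langle L_{\omega,-}^{1/2}L_{\omega,+}L_{\omega,-}^{1/2}f,f\rangle=-\|f\|_{L^2}^2<0
\end{equation*}
by \eqref{17/08/05/11:50}. For radial $g$ with $\langle g,u_2\rangle=0$, write $g=a\phi_1+h$ with $h\perp\phi_1$ and $u_2=\gamma\phi_1+u_2^\perp$ with $u_2^\perp\perp\phi_1$. Testing $L_{\omega,+}u_1=-\mu u_2$ against $u_1$ and $L_{\omega,-}u_2=\mu u_1$ against $u_2$ yields $\langle L_{\omega,+}u_1,u_1\rangle+\langle L_{\omega,-}u_2,u_2\rangle=0$, forcing $\langle L_{\omega,+}u_1,u_1\rangle<0$ and hence $\gamma\neq 0$. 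The constraint becomes $a=-\langle h,u_2^\perp\rangle/\gamma$, so $|a|\lesssim\|h\|_{L^2}$. The weighted Cauchy--Schwarz on $\phi_1^\perp$,
\begin{equation*}
\langle h,u_2^\perp\rangle^2\leq\langle L_{\omega,+}h,h\rangle\,\langle L_{\omega,+}^{-1}u_2^\perp,u_2^\perp\rangle,
\end{equation*}
together with the algebraic identity $\langle L_{\omega,+}^{-1}u_2^\perp,u_2^\perp\rangle=\langle L_{\omega,+}^{-1}u_2,u_2\rangle+\gamma^2/\lambda_1$ (using $L_{\omega,+}^{-1}\phi_1=-\phi_1/\lambda_1$), then gives $\langle L_{\omega,+}g,g\rangle\geq\bigl(-\tfrac{\lambda_1}{\gamma^2}\langle L_{\omega,+}^{-1}u_2,u_2\rangle\bigr)\langle L_{\omega,+}h,h\rangle\gtrsim\|g\|_{H^1}^2$.

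\textbf{Main obstacle.} The decisive point in (i) is the sign $\langle L_{\omega,+}^{-1}u_2,u_2\rangle<0$, which is not invariant under a complex rotation $\mathscr{U}_+\mapsto e^{i\theta}\mathscr{U}_+$; the argument must therefore use the canonical min-max realization of $\mathscr{U}_+$ from Section \ref{17/08/03/10:55}. In (ii), the side statement $\langle\Phi_\omega,\partial_\omega\Phi_\omega\rangle\neq 0$ is subtle in the Talenti-scaling regime, because the Talenti function is not in $L^2(\mathbb{R}^3)$, so $\|\Phi_\omega\|_{L^2}^2$ is large and its $\omega$-derivative must be controlled by careful rescaling rather than by a direct limit comparison with $W$; this non-vanishing must be proved here independently of Theorem \ref{17/08/15/11:05}, whose sign refinement is precisely the application of the present lemma.
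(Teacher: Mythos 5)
The paper does not actually write out a proof of this lemma: it only refers to Lemma B.5 of \cite{AIKN3} and Lemma 2.2 of \cite{Nakanishi-Schlag2}, and your argument is precisely the standard one behind those references, so the overall strategy is the intended one. There is, however, one genuine gap and one misdiagnosed ``obstacle.''

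The gap is in part (ii). Your reduction of the coercivity to the single scalar fact $\langle\Phi_\omega,\partial_\omega\Phi_\omega\rangle\neq0$ is correct, but you leave that fact as an unexecuted ``side step,'' and the plan you sketch for it (the identity $L_{\omega,+}\partial_\omega\Phi_\omega=-\Phi_\omega$ plus ``a size estimate on $\partial_\omega\Phi_\omega$ in the Talenti-rescaled frame'') is essentially the whole content of Theorem \ref{17/08/15/11:05}, i.e.\ the hardest analytic input of the section; as written, (ii) is not proved. The fix is cheap and non-circular: $\langle\Phi_\omega,\partial_\omega\Phi_\omega\rangle=\tfrac12\tfrac{d}{d\omega}\|\Phi_\omega\|_{L^2}^2<0$ for $\omega>\omega_4$ by Theorem \ref{17/08/15/11:05}, whose proof in Section \ref{17/08/15/11:02} uses only Lemma \ref{13/01/01/16:22}, the nondegeneracy, and Grillakis's index identity \eqref{18/02/10/17:45} --- not the present lemma. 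So one should simply take $\omega_5\ge\omega_4$ and quote that theorem.

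On part (i), your ``main obstacle'' is illusory, and worrying about it leads you to restrict to the particular min-max realization of $\mathscr{U}_+$ from Section \ref{17/08/03/10:55}, which the statement does not permit (it quantifies over an arbitrary eigenfunction). Since $\mathscr{L}_\omega$ is only real-linear, $e^{i\theta}\mathscr{U}_+$ is not an eigenfunction for generic $\theta$; more to the point, the decisive sign is intrinsic: for any eigenfunction, the relations $L_{\omega,+}u_1=-\mu u_2$, $L_{\omega,-}u_2=\mu u_1$ give
\begin{equation*}
\langle L_{\omega,+}^{-1}u_2,u_2\rangle=-\mu^{-2}\langle L_{\omega,-}u_2,u_2\rangle\le0,
\end{equation*}
with equality forcing $u_2\in{\rm Ker}\,L_{\omega,-}$ and then $\mathscr{U}_+=0$. (You in fact derive the needed inequality $\langle L_{\omega,+}u_1,u_1\rangle<0$ in passing, so the detour through $f$ is unnecessary; note also the sign slip there: $\langle L_{\omega,+}^{-1}u_2,u_2\rangle=+\mu^{-2}\langle L_{\omega,-}^{1/2}L_{\omega,+}L_{\omega,-}^{1/2}f,f\rangle=-\|f\|_{L^2}^2$, your intermediate expression has the wrong sign although the conclusion is right.) The remainder of (i) --- the decomposition along the unique negative eigenfunction $\phi_1$ of $L_{\omega,+}|_{\rm rad}$, the weighted Cauchy--Schwarz on $\phi_1^{\perp}$, and the identity $\langle L_{\omega,+}^{-1}u_2^{\perp},u_2^{\perp}\rangle=\langle L_{\omega,+}^{-1}u_2,u_2\rangle+\gamma^{2}/\lambda_1$ --- is the standard positivity criterion and is correct, granted Lemma \ref{13/01/01/16:22}, the nondegeneracy (hence the exclusion of $d=4$), and \eqref{17/08/15/13:57} to guarantee a positive spectral gap above $0$ on $\phi_1^{\perp}$, which is what upgrades $\langle L_{\omega,+}h,h\rangle>0$ to $\gtrsim\|h\|_{H^1}^2$.
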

The proof of Lemma \ref{13/01/02/12:08} is similar to the one of Lemma B.5 in \cite{AIKN3} (see also Lemma 2.2 of \cite{Nakanishi-Schlag2}).

We prove Theorem \ref{17/08/15/11:05} by using the argument in the linear stability theory (cf. \cite{Grillakis}).  To this end, let $P_{\omega,-}$ denotes the orthogonal projection  onto $({\rm Ker}L_{\omega,-})^{\perp}$ and $R_{\omega}$ the operator defined by  
\begin{equation}\label{18/02/10/17:19}
R_{\omega}:=P_{\omega,-} L_{\omega,+}P_{\omega,-}.
\end{equation}
Since ${\rm Ker} L_{\omega,-}={\rm span}\{ \Phi_{\omega}\}$ (see Lemma \ref{13/03/18/15:50}), we find that for any $u\in H^{1}(\mathbb{R}^{d})$, 
\begin{equation}\label{18/02/10/17:33}
P_{\omega,-} u= u - (u,\Phi_{\omega})_{L^{2}}\frac{\Phi_{\omega}}{\|\Phi_{\omega}\|_{L^{2}}^{2}}.
\end{equation}
Furthermore, let $N(R_{\omega})$ denote the number of negative eigenvalues of $R_{\omega}$, and $I(-i\mathcal{L}_{\omega})$ the number of positive eigenvalues of $-i\mathcal{L}_{\omega}$. Since $L_{\omega,-}$ is non-negative (see Lemma \ref{13/03/18/15:50}), there is no negative eigenvalue of $L_{\omega,-}$, and therefore Corollary 1.1 of \cite{Grillakis} together with Theorem \ref{17/08/03/11:47} implies that 
\begin{equation}\label{18/02/10/17:45}
N(R_{\omega})=I(-i\mathcal{L}_{\omega}) \ge 1.
\end{equation}

Now, we are in a position to prove Theorem \ref{17/08/15/11:05}:  
\begin{proof}[Proof of Theorem \ref{17/08/15/11:05}]
We prove the claim by contradiction. Hence, suppose the contrary that 
for any $\nu>\omega_{2}$, there exists $\omega(\nu)>\nu$ such that 
\begin{equation}\label{18/02/10/16:59}
\frac{d}{d\omega}\mathcal{M}(\Phi_{\omega})\Big|_{\omega=\omega(\nu)}\ge 0. 
\end{equation}
Under this hypothesis, we show that there exists $\omega>\omega_{2}$ such that 
 for any $f\in ({\rm Ker}L_{\omega,-})^{\perp}$,
\begin{equation}\label{18/02/11/11:35}
0 \le \langle L_{\omega,+}f, f \rangle 
.
\end{equation}
Note that \eqref{18/02/11/11:35} implies $N(R_{\omega})=0$ and therefore we arrive at a contradiction (see \eqref{18/02/10/17:45}). Thus, \eqref{18/02/11/11:35} proves Theorem \ref{17/08/15/11:05}. 
\par 
We shall prove \eqref{18/02/11/11:35}. To this end, we employ an argument similar to \cite{Grillakis-Shatah-Strauss1}. 
\par 
Let $\nu>\omega_{2}$ be a frequency to be specified later. Then, it follows from the hypothesis \eqref{18/02/10/16:59} that there exists $\omega>\nu$ such that 
\begin{equation}\label{18/02/11/14:57}
0 \le 
\langle \Phi_{\omega}, \partial_{\omega}\Phi_{\omega}\rangle 
.
\end{equation}
We see from the nondegeneracy of $\Phi_{\omega}$ (see Theorem \ref{18/09/09/17:09} and Remark \ref{18/09/10/19:31}) that $L_{\omega,+}$ is one-to-one as an operator in $L_{\rm rad}^{2}(\mathbb{R}^{d})$. Moreover, it follows from Lemma \ref{13/01/01/16:22} that there exists a unique negative eigenvalue $-e_{\omega}$ of $L_{\omega,+}$ and an $L^{2}$-normalized eigenfunction $v_{\omega}$ associated with $-e_{\omega}$. Let $f\in ({\rm Ker}L_{\omega,-})^{\perp}$, and consider the following decompositions: 
\begin{align}
\label{18/02/11/15:31}
f&=\ a_{1}  v_{\omega} + w_{1} \quad \mbox{with $\langle v_{\omega}, w_{1} \rangle=0$},
\\[6pt]
\label{18/09/17/20:14}
\partial_{\omega}\Phi_{\omega}
&= a_{2} v_{\omega} + w_{2}
\quad \mbox{with $\langle v_{\omega}, w_{2} \rangle=0$}
.
\end{align}
Note that by Lemma \ref{13/03/18/15:50},  
\begin{equation}\label{18/09/17/20:31}
\langle f,\Phi_{\omega} \rangle=0.
\end{equation} 
Moreover, differential of the both sides of $L_{\omega,-}\Phi_{\omega}=0$ with respect to $\omega$ yields   
\begin{equation}\label{18/02/11/15:02} 
L_{\omega,+} (\partial_{\omega}\Phi_{\omega}) 
=
-\Phi_{\omega}.
\end{equation}

Now, we see from \eqref{18/09/17/20:31}, \eqref{18/02/11/15:02}, \eqref{18/02/11/15:31}, \eqref{18/09/17/20:14} and $L_{\omega,+}v_{\omega}=-e_{\omega}v_{\omega}$ that 
\begin{equation}\label{18/02/11/17:20}
\begin{split} 
0&=
\langle 
f, \Phi_{\omega} 
\rangle
=
-\langle 
f, L_{\omega,+}\partial_{\omega}\Phi_{\omega}
\rangle 
=
-\langle 
L_{\omega,+}f, \partial_{\omega}\Phi_{\omega}
\rangle 
\\[6pt]
&=
\langle a_{1} e_{\omega}v_{\omega}
-
L_{\omega,+} w_{1}, 
\, a_{2}v_{\omega}+ w_{2}  
\rangle 
=
a_{1}a_{2}e_{\omega} 
-
\langle L_{\omega,+} w_{1}, w_{2} \rangle 
. 
\end{split} 
\end{equation}
Thus, we have obtained 
\begin{equation}\label{18/02/11/15:11} 
\langle 
L_{\omega,+} w_{1}, w_{2} \rangle
= 
a_{1}a_{2} e_{\omega}
.
\end{equation}
On the other hand, it follows from \eqref{18/02/11/14:57}, \eqref{18/02/11/15:02}, \eqref{18/09/17/20:14} and $L_{\omega,+}v_{\omega}=-e_{\omega}v_{\omega}$ that 
\begin{equation}\label{18/09/17/20:23} 
\begin{split}
0
&\le 
\langle 
\Phi_{\omega}, \partial_{\omega} \Phi_{\omega} \rangle
=
-
\langle 
L_{\omega,+} \partial_{\omega} \Phi_{\omega}, \partial_{\omega} \Phi_{\omega} \rangle
\\[6pt]
&=
-\langle 
a_{2} (-e_{\omega}v_{\omega})+ L_{\omega,+}w_{2}, \, 
 a_{2}v_{\omega}+w_{2}  
\rangle 
=
a_{2}^{2} e_{\omega}
-
\langle 
L_{\omega,+} w_{2}, w_{2} \rangle
,
\end{split} 
\end{equation}
so that 
\begin{equation}\label{18/09/17/20:45} 
\langle 
L_{\omega,+} w_{2}, w_{2} \rangle
\le 
a_{2}^{2} e_{\omega}
.
\end{equation}
Furthermore, we see from the Cauchy-Schwartz inequality, \eqref{18/02/11/15:11} and \eqref{18/09/17/20:45} that   
\begin{equation}\label{18/02/11/22:45}
\begin{split}
\langle L_{\omega,+}f, f \rangle
&=
\langle 
-a_{1}e_{1}v_{\omega}+L_{\omega,+}w_{1},\,  a_{1}v_{\omega}+w_{1} 
\rangle
\\[6pt]
&=
-a_{1}^{2} e_{\omega}  
+
\langle L_{\omega,+} w_{1}, w_{1} \rangle  
=
-a_{1}^{2} e_{\omega}  
+
\frac{\| L_{\omega,+}^{\frac{1}{2}} w_{1} \|_{L^{2}}^{2}\| L_{\omega,+}^{\frac{1}{2}} w_{2} \|_{L^{2}}^{2} }{\| L_{\omega,+}^{\frac{1}{2}} w_{2} \|_{L^{2}}^{2}}
\\[6pt]
&\ge 
-a_{1}^{2}e_{\omega}  
+
\frac{\langle L_{\omega,+}^{\frac{1}{2}} w_{1}
, L_{\omega,+}^{\frac{1}{2}}  w_{2} \rangle^{2}}{\langle  L_{\omega,+}w_{2}, w_{2} \rangle}
\ge 
-a_{1}^{2} e_{\omega}  
+
a_{1}^{2} e_{\omega} =0
.
\end{split} 
\end{equation}
Thus, we have proved \eqref{18/02/11/11:35} and completed the proof. 
\end{proof}

\subsection*{Acknowledgement}
This work was done while H.K. was visiting at University of Victoria. 
H.K. thanks all members of the Department of Mathematics and Statistics for their warm hospitality. 
S.I. was supported by NSERC grant (371637-2014). 
H.K. was supported by JSPS KAKENHI Grant Number JP17K14223.

%%%%%%%%%%%%%%%%%%%%%%%%%%%%%%%%%%%%%%%%%%%%%%%%%%%%%%%%%%%%%%%%%%%%%%%%%%%%%%%

\bibliographystyle{plain}

%%%%%%%%%%%%%%%%%%%%%%%%%%%%%%%%%%%%%%%%%%%%%%%%%%%%%%%%%%%%%%%%%%%%%%%%%%%%%%%%

\end{document}